\newtheorem{lemma}{Lemma}
\newcommand{\mysection}{\setcounter{equation}{0} \section}
\renewcommand{\P}{\mathbb{P}}
\newcommand{\E}{\mathbb{E}}
\newcommand{\Z}{\mathbb{Z}}
\newcommand{\C}{\mathbb{C}}  
\newcommand{\N}{\mathbb{N}}
\newcommand{\R}{\mathbb{R}}
\newtheorem{THM}{Theorem}   
\newtheorem{remark}{Remark}
\newtheorem{PROP}[THM]{Proposition}
\def\1{\mbox{1\hspace{-0.25em}l}}
\newcommand \A[1]{{\bf (#1)}}
\def\leftB{[\![}
\def\rightB{]\!]}
\def\det{{{\rm det}}}
\def\0{{\mathbf{0}}}
\title{\textbf{Parabolic bootstrap for some non-linear equations}}
\author{\textbf{Igor Honor\'e}\footnote{Univ Lyon, CNRS, Université Claude Bernard Lyon 1, UMR5208, Institut Camille Jordan, F-69622 Villeurbanne, France. E-mail: honore@math.univ-lyon1.fr}}
\begin{document}
	\maketitle
	
	\begin{abstract}
		
		We 
	obtain the well-posedness and Schauder estimates for a class of system of linear, quasi-linear and non-linear second order partial differential equations.
	We deduce  existence and uniqueness of a global smooth solution of a non-linear and non-local equation that we call ``semi"	
	incompressible Navier Stokes equation in $\R^3$.
	
	
		\end{abstract}
	
	{\small{\textbf{Keywords:}  Schauder estimates, non-linear second order parabolic equations, fluid mechanics.}}
	
	{\small{\textbf{MSC:} Primary: 35K40; Secondary: 
			35Q35 }}

	
	\mysection{Introduction}
%
%
%
%
%
	
	
	The main goal of this article is to establish the global well-posedness of a smooth solution $\mathbf u$ for a class of non-linear parabolic equation.
	Specifically, for any $x \in \R^d$, we consider the Cauchy problem, for any $(t,x) \in (0,T] \times  \R^d$ 
		\begin{equation}
	\label{Quasi_equation_intro}
	\begin{cases}
	\partial_t\mathbf u(t,x)+  \mathbb A (\mathbf u)(t,x) \cdot  \mathbf D\mathbf u (t,x) 
	= \mathbf  D^2 \mathbf u(t,x):   a(t)+ \mathbb C(\mathbf u)(t,x) + \mathbf f(t,x) 
	,\\
	\mathbf u(0,x)=\mathbf g(x),
	\end{cases}
	\end{equation}
	where the operator $\mathbb A$ is point-wisely bounded and possibly non-local, $\mathbb C$ is ``sub-linear", see Assumptions \A{P${}_{\mathbb A}$}, \A{P${}_{\mathbb C}$} and \A{F} further; and $\mathbf f, \mathbf  g, \mathbf u$ are $\R^r$-valued functions, $r \in \N$.
	The Burgers' equation in a particular case of \eqref{Quasi_equation_intro}.
	\\	
	
The strategy developed here is based on the Schaefer fixed point theorem whose the cornerstone is the \textit{a priori} estimates that we obtain by first considering the usual linear parabolic problem,
	\begin{equation}
	\label{KOLMO_intro}
	\begin{cases}
	\partial_t \mathbf u(t,x)+  \mathbf b(t,x)\cdot  \mathbf D \mathbf  u(t,x)  \textcolor{black}{+\mathbf c(t,x) \otimes \mathbf u(t,x)}=\mathbf D^2 \mathbf u(t,x):a(t)+\mathbf  f(t,x), 
	\\
	\mathbf  u(0,x)= \mathbf  g(x),\ x\in \R^{d},
	\end{cases}
	\end{equation}
	where 
	 $\mathbf b$ is $\R^r \otimes \R^d$-valued, $\mathbf c$ is $\R^r $-valued and $a$ is $\R^d \otimes \R^d$-valued.
	 The function $\mathbf b$ will depend on $\mathbf u$, this dependency on ${\mathbf u}$ must be locally bounded.
	 
	 Importantly, we obtain a \textit{parabolic bootstrap} which provides a additional controls of the solution than stated in the classical books by Ladyjenska\"ia , Solonnikov and Ural'ceva \cite{lady:solo:ural:68}, and by Lieberman \cite{lieb:96}.
	\\
	
	The backbone of the controls of the above linear equation \eqref{KOLMO_intro}, is to consider the probabilistic representation of the solution by the Feynman-Kac stochastic formula, which easily yields the boundedness of the uniform norm. Next, some Gr\"onwall's lemma applications allow us to obtain Schauder estimates for the \textit{a priori} controls and to use compactness argument requires for the Schaefer fixed point theorem.
	
	
		Usual Picard iteration
		for non-linear parabolic 
		yields difficulties to obtain a global existence of smooth solution, i.e. for a general initial condition $\mathbf g$ and time horizon $T>0$. 
	\\
	
	Thanks to our approach, we can also perform an other fixed-point argument to consider the equation, that we call ``semi" Navier-Stokes equation,
	 	\begin{equation}
	 \label{NS_equation_v1}
	 \begin{cases}
	 \partial_t\mathbf u(t,x)+ \P [\mathbf u](t,x)   \cdot \mathbf D \mathbf u(t,x)
	 = \nu \Delta\mathbf u(t,x)+ \mathbf f(t,x) 
	 ,\ t\in (0,T],\\
	 \mathbf u(0,x)=\mathbf g(x),
	 \end{cases}
	 \end{equation}
where $\P$ stands for the Leray-Hopf projector, which is a projector on the divergence free function space, see Section \ref{sec_def}.
The analysis is essentially based on the energy estimates inspired by the Leray's estimates for the usual Navier-Stokes equation \cite{lera:34}, associated with Feynman-Kac representation; additional controls in Lebesgue space, thanks to harmonic analysis, 
 are also used.
	
	In the standard Navier-Stokes equation, the Leray-Hopf projector is applied on $ \mathbf u \cdot \mathbf D \mathbf u$ instead of only $\mathbf u$ in equation \eqref{NS_equation_v1}.
	\\ 	
	
	From the quasi-linear equation, we deal with some non-linear equations of the type :
\begin{equation}
\label{Non_linear_equation_intro}
\begin{cases}
\partial_t\mathbf u(t,x)+  \mathbf P (  \mathbf u, \mathbf D \mathbf u(t,x) )(t,x) + \mathbf c(t) \otimes \mathbf u(t,x) 
= \mathbf  D^2 \mathbf u(t,x):   a(t)+ \mathbf f(t,x) 
,\ t\in (0,T],\\
\mathbf u(0,x)=\mathbf g(x),
\end{cases}
\end{equation}
where $\mathbf P$ is a locally bounded operator, see Assumption \A{P${}_{\mathbf P}$} latter.

When $\mathbf P (  \mathbf u, \mathbf D \mathbf u(t,x) )(t,x)= \mathbf D \mathbf u(t,x)^{\otimes 2} $ and $a(t)=\kappa>0$, this is a multidimensional version of KPZ equation without 
white noise as source function, see \cite{k:p:z:1986}.


	The method developed in this article could be adapted to equation associated with other differential operator. Some non-local operators could be considered like a $\alpha$-stable operator or a fractional Laplacian operator, see e.g. \cite{chau:meno:prio:19}, as soon as there is a suitable probabilistic representation. 
	\\
	%

	The paper is organised as following.
	We define useful notations in Section \ref{sec_def}.
	In Section \ref{sec_guide_proof}, we gather some tools required for our analysis: fixed-point argument and some computations rules.
	Next in Section \ref{sec_linear}, we develop the crucial analysis of linear parabolic equation \eqref{KOLMO_intro}.
	Thanks to these computations, we state in Section \ref{sec_quasi} our first main result on non-linear equation, the Schauder estimates with uniqueness of quasi-linear equation \eqref{Quasi_equation_intro}. The proof of the strong well-posedness of a solution of the quasi-linear equation \eqref{Quasi_equation_intro} satisfying Schauder estimates is developed in Section \ref{sec_proof_Quasi}.
	With some substantial extra computations, we succeed in adapting the method for ``semi" Navier-Stokes equation \eqref{NS_equation_v1} in Section \ref{sec_semi_NS}.
	We extend the results to a fully non-linear of the first order equation \eqref{Non_linear_equation_intro} in Section \ref{sec_non_line}.
	Finally, we develop in Appendix Section \ref{sec_technical_lemma} a precise proof of the Schauder estimates for a usual heat equation.

	
	\mysection{Notations and Definitions}
	\label{sec_def}
	
	\subsection{Constants}
	
	From now on, we denote by $C>0$ and $c>1$ generic constants that may change from line to line but only depends on known parameters such as $\gamma$, $d$, $r$. 
	
	
	In our analysis we state some controls where the upper-bounds are of the type $\mathbf N_{(\cdot)}(\cdot )>0$ 
	which is defined in the identity given by the index; and the dependency in the parenthesis is increasing, namely if $x_1\leq x_2$ then $\mathbf N_{(\cdot)}(\cdot,x_1,\cdot  )\leq \mathbf N_{(\cdot)}(\cdot,x_2,\cdot  )$.
	
	
		These cumbersome notations are useful to track the dependency on some non-linearity in the \textit{a priori} controls.

	\subsection{Tensor and Differential notations for real valued } 
	\subsubsection{Unidimensional valued-problem}

	For any $z \in  \R^d$, we use the decomposition $z= z_1 e_1+ \hdots z_d e_d$, where $(e_1,\hdots, e_d)$ is the canonical base of $\R^d$.
	
	We usually use the notation $\partial_t$ for the derivative in time $t \in [0,T]$, also $\partial_{z_k}$, $k \in \N$, is the derivative in the variable $z_k$.

	Next, $D_z$ denotes the gradient in the variable $z \in \R^d$, in other words $D_z=\partial_{z_1} e_1 + \hdots + \partial_{z_d} e_d$.
	When there is no ambiguity, we will also write $D$  for the gradient or the Jacobian matrix.
	
	
	The divergence writes $D_{z}\cdot$ also denoted by $D\cdot$, and is defined for any $\R$-function $f: \R^d \mapsto \R$ by $D_z \cdot f= \sum_{k=1}^d \partial_{z_k} f$.
	%
		\\
	
	For any $f: \R^d \mapsto \R$, we define the Hessian matrix $D_z^2f=\big (\partial_{z_i} \partial_{z_j} f \big )_{1 \leq i,j\leq d}$, and the usual Laplacian operator $\Delta f=\sum_{1 \leq i,j \leq d} \partial_{z_i} \partial_{z_j} f$.
	
	More generally, for any $k \in \N$, $D^k_zf$ denotes the  order $k$ tensor $(\partial_{z_{i_1}} \hdots \partial_{z_{i_k}}f)_{(i_1, \hdots, i_k) \in \leftB 1,d \rightB^k}$. For any multi-index $\alpha=(\alpha_1,\hdots,\alpha_d) \in \N_0^d$, we write $D^\alpha_z f =  \partial_{z_1}^{\alpha_1} \hdots \partial_{z_k}^{\alpha_k} f$, in particular if for $ i \in  \leftB 1,d \rightB$, $\alpha_i=0$ there is no derivative in $z_i$ in the expression of $D^\alpha_z f$. 
	
	We also denote for any $\alpha=(\alpha_1,\cdots,\alpha_m)\in \N^m $, the order of this multi-index by $|\alpha|=\sum_{i=1}^{m} \alpha_i $.
	
	
	The symbols $\lfloor \cdot \rfloor$ and $\lceil \cdot \rceil$ stand respectively for the well-known floor and ceiling functions defined for any $\chi \in \R$ by:
	\begin{eqnarray}
	\lfloor \chi \rfloor&:=&\max \{n\in\Z \mid n\le \chi \},
	\nonumber \\
	\lceil \chi \rceil&:=&\min \{n\in \Z\mid n\ge \chi \}.
	\end{eqnarray}
	
	\subsubsection{Multidimensional valued problem}
	
		From now on, the symbol ``$\cdot$" between two tensors is the usual tensor contraction. 
	For example, if $M \in \R^d \otimes \R^d \otimes\R^d$ and $N \in \R^d$ then $M \cdot N$  is a $d \times d$ matrix. If the two considered tensors are vectors then ``$\cdot$" matches with the scalar product.

	Let us explicitly precise the used tensor notations in the linear Cauchy problem \eqref{KOLMO}.
	In the whole article, `` $\cdot$ " denotes the usual tensor contraction, namely for any $(t,x)\in (0,T]\times \R^{d}$, 
	\begin{equation}\label{def_contract_tensor}
	\mathbf b(t,x)\cdot  \mathbf D\mathbf u(t,x) = \Big (\big \langle \mathbf b_i,  D \mathbf u_i \big \rangle(t,x) \Big )_{i \in   \leftB 1,r \rightB}, 
	\end{equation}
	for 
	$\mathbf  u=  (\mathbf u_i )_{i \in  \leftB 1,r \rightB}$ and $\mathbf  b=  (\mathbf b_i )_{i \in  \leftB 1,r \rightB}$, also $D$ is the $\R^d$ gradient operator and $\langle\cdot , \cdot  \rangle$ stand for the usual $\R^d$ scalar product.
	Furthermore, `` $:$ " corresponds to a double tensor contraction. Namely, we set
	\begin{equation}\label{def_double_contract_tensor}
	\mathbf D^2\mathbf u(t,x):a(t) = \Big ( {\rm Tr} \big (  D^2 \mathbf u_i  a(t) \big ) \Big )_{i \in \leftB 1,r \rightB}, 
	\end{equation}
	where $ D^2 \mathbf u_i= (\partial_{x_j}\partial_{x_k} \mathbf u_i)_{1\leq j,k \leq d}$ is the usual  Hessian matrix of $\mathbf u_i$, and ${\rm Tr}$ is the matrix trace operator. In \eqref{def_double_contract_tensor} above, $D^2 \mathbf u_i a$ is a usual matrix product.
	
	
	Finally, we denote tensor product by 
	\begin{equation}\label{def_c_otimes_u}
		\textcolor{black}{\mathbf c(t,x) \otimes \mathbf u(t,x)} :=
		\Big ( \sum_{j=1}^r\mathbf c_{ij}(t,x)  \mathbf u_j(t,x)\Big )_{1 \leq i \leq r}.
	\end{equation}


	\subsection{H\"older spaces}
	\label{SEC_HOLDER_BESOV_SPACE}

	\label{sec_def_Holder}

	In this section, we provide some useful notations and functional spaces. 
	For all  $k \in \N_0$ and $\beta\in (0,1) $,  $\|\cdot\|_{C^{k+\delta}(\R^m,\R^\ell)}$, with $m\in \{1,d\} $ and $\ell \in \{1,d, d\otimes d \} $ the considered dimensions\footnote{we write $\R^{d\otimes d}$ for $\R^d \otimes \R^d$ the space of square matrices of size $d$.}, $\delta \in (0,1)$, is the usual homogeneous H\"older norm, see for instance  
	\cite{luna:95}. Precisely, for all $\psi \in C^{k+\delta}(\R^m,\R^\ell) $,   $\alpha=(\alpha_1,\cdots,\alpha_m)\in \N^m $, 
	we set the semi-norm:
	\begin{eqnarray}
	\|\psi\|_{C^{k+\delta}(\R^m,\R^\ell)}&:=& \sum_{i=1}^k \sup_{|\alpha|=i} \|\mathbf  D^{\alpha} \psi\|_{L^\infty(\R^m,\R^\ell)}+ \sup_{|\alpha|=k} [\mathbf D^\alpha \psi]_\delta,\notag \\
	\phantom{BOUHHH} [\mathbf  D^\alpha \psi  ]_\delta &:=& \sup_{(x,y)\in (\R^m)^2,x\neq y} \frac{|\mathbf  D^\alpha \psi(x)-\mathbf  D^\alpha \psi(y)|}{|x-y|^\delta},
	\label{USUAL_HOLDER_SPACE}
	\end{eqnarray} 
	the notation $|\cdot| $ is the Euclidean norm on the considered space. 
	
	If $\delta=1$, the space matches with the usual Lipschitz space we write:
	\begin{eqnarray}
	\|\psi\|_{C^{k+1}(\R^m,\R^\ell)}&:=& \sum_{i=1}^k \sup_{|\alpha|=i} \|\mathbf  D^{\alpha} \psi\|_{L^\infty(\R^m,\R^\ell)}+ \sup_{|\alpha|=k} [\mathbf  D^\alpha \psi]_1,\notag \\
	\phantom{BOUHHH} [\mathbf  D^\alpha \psi  ]_1 &:=& \sup_{(x,y)\in (\R^m)^2,x\neq y} \frac{|\mathbf  D^\alpha \psi(x)-\mathbf D^\alpha \psi(y)|}{|x-y|}.
	\label{USUAL_Lipschitz_SPACE}
	\end{eqnarray}

	We denote by:
	$$C_b^{k+\delta}(\R^m,\R^\ell):=\{\psi \in C^{k+\delta}(\R^m,\R^\ell) : \|\psi\|_{L^{\infty}(\R^m,\R^{\ell})}<+\infty\},$$  
	the associated subspace with bounded elements (non-homogeneous H\"older space).
	The corresponding H\"older norm is defined by:
	\begin{equation}
	\label{BD_HOLDER}
	\|\psi\|_{C_b^{k+\delta}(\R^m,\R^\ell)}:=\|\psi\|_{C^{k+\delta}(\R^m,\R^\ell)}+\|\psi\|_{L^\infty(\R^m,\R^\ell)}.
	\end{equation}
	For the sake of notational simplicity, from now on, we write:
	\begin{equation*}
	\|\psi\|_{L^\infty}:=\|\psi\|_{L^\infty(\R^{d},\R^\ell)}, \
	\|\psi\|_{C^{k+\delta}}:=\|\psi\|_{C^{k+\delta}(\R^{d},\R^\ell)}
	, \
	\|\psi\|_{C_{b}^{k+\delta}}:= \|\psi\|_{C_{b}^{k+\delta}(\R^{d},\R^\ell)}.
	\end{equation*}
	For time dependent functions, $\varphi_1 \in L^{\infty} ([0,T ];C_{b}^{k+\delta}(\R^{m},\R^\ell) ) $ and $ \varphi_2 \in   L^{\infty} ([0,T ];C^{k+\delta}(\R^{m},\R^\ell) )$ we define the norms:
	\begin{equation*}
	\|\varphi_1\|_{L^\infty(C_{b}^{k+\delta})}:= \sup_{t \in [0,T]}\|\varphi_1(t, \cdot)\|_{C_{b}^{k+\delta}(\R^{m},\R^\ell)},
	\
	\|\varphi_2\|_{L^\infty(C^{k+\delta})}:= \sup_{t \in [0,T]} \|\varphi_2(t, \cdot)\|_{C^{k+\delta}(\R^{m},\R^\ell)}
	.
	\end{equation*}


	In this article, we will be as precise as possible on the norm controls. We do not necessarily upper-bound by the complete norm of the consider function space.
	For instance, even if $f \in L^\infty([0,T];C^\gamma_b(\R^d,\R^\ell ))$, we will sometimes give some controls in term of $\|f\|_{L^\infty(C^\gamma)}$ instead of $\|f\|_{L^\infty(C_b^\gamma)}$ which is crucial to our proof of the Schauder estimates.
	\\
	
For the study of the ``semi" Navier-Stokes like equation, we need to define for all $\beta  >0$, $\psi  \in C_0^\infty(\R^d, \R^d)$ and $\varphi \in C_0^\infty(\R^d, \R^d)$, the useful notation:
	\begin{equation}\label{def_norm_beta}
	\|\psi  \|_{\beta} := \sup_{x \in \R^d} (1+|x|^\beta )|\psi(x)|,  \ \ \ \|\varphi \|_{L^\infty,\beta} := \sup_{t \in [0,T]}\sup_{x \in \R^d} (1+|x|^\beta )|\varphi(t,x)|.
	\end{equation}
	For this problem, we also consider the Lebesgue space with usual notations that we do not detail here.
	
	Finally, the Leray projector $\mathbb P$ is defined for any function $\varphi : \R^3 \rightarrow \R^3$, sufficiently integrable  by
	\begin{equation}\label{def_P}
	\forall x \in \R^3,	\P \varphi (x)= \varphi(x)- D (-\Delta)^{-1}D\cdot \varphi(x).
	\end{equation}
	This is a projector on the divergence free functions space, i.e. we have $D \cdot \P \varphi =0$.
	
	
	\mysection{Tools for the proof}
	\label{sec_guide_proof}
	
	\subsection{The fixed-point theorem}
	
	Our proof of the global existence of a smooth solution to non-linear equations relies on 
	a fixed-point strategy,  the Schaefer also called Leray Schauder theorem 
	established in \cite{lera:scha:34} and \cite{scha:55}.
	\begin{THM}[Schaefer/Leray-Schauder's Fixed Point Theorem]\label{theo_schaef}
		Let $E$ be a Banach space and $\mathscr H : E \longrightarrow E$ a continuous mapping.
		We suppose that $\mathscr H$ is compact and if there is a constant $M >0$ such that for any $\mu \in [0,1]$, $e=\mu \mathscr H(e) \implies \|e\|_{E} \leq M$.
		Hence, there is $e \in E$ such that $\mathscr H(e)=e$.
	\end{THM}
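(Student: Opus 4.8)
The plan is to deduce the statement from Schauder's fixed point theorem by a standard radial retraction argument. First I would fix a radius $R > M$, say $R = M+1$, and consider the closed ball $\bar B := \{ e \in E : \|e\|_E \leq R \}$, which is a closed, bounded, convex subset of the Banach space $E$. Next I would introduce the radial retraction $\rho : E \to \bar B$ defined by $\rho(e) = e$ if $\|e\|_E \leq R$ and $\rho(e) = R e / \|e\|_E$ otherwise; this map is continuous (in fact $1$-Lipschitz), equals the identity on $\bar B$, and has range contained in $\bar B$.

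Then I would set $\mathscr H_R := \rho \circ \mathscr H : \bar B \to \bar B$. Since $\mathscr H$ is continuous and compact and $\rho$ is continuous, $\mathscr H_R$ is continuous; and since $\rho$ takes values in the bounded set $\bar B$ while $\mathscr H$ sends bounded sets to relatively compact sets, $\overline{\mathscr H_R(\bar B)}$ is compact. Schauder's fixed point theorem — the infinite-dimensional extension of Brouwer's theorem to a compact continuous self-map of a closed, bounded, convex subset of a Banach space — then yields a point $e^* \in \bar B$ with $\rho(\mathscr H(e^*)) = \mathscr H_R(e^*) = e^*$.

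Finally I would check that $e^*$ is in fact a fixed point of $\mathscr H$ itself. Either $\|\mathscr H(e^*)\|_E \leq R$, in which case $\rho$ acts as the identity on $\mathscr H(e^*)$ and hence $\mathscr H(e^*) = e^*$; or $\|\mathscr H(e^*)\|_E > R$, in which case $e^* = \rho(\mathscr H(e^*)) = \mu\, \mathscr H(e^*)$ with $\mu := R / \|\mathscr H(e^*)\|_E \in (0,1)$, so that in particular $\|e^*\|_E = R$. But then the a priori bound in the hypothesis, applied to this $\mu$, forces $\|e^*\|_E \leq M < R = \|e^*\|_E$, a contradiction. Thus the second alternative is impossible and $\mathscr H(e^*) = e^*$, which is the claim.

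As for the difficulty: if Schauder's fixed point theorem may be invoked as a black box (as the references \cite{lera:scha:34}, \cite{scha:55} indicate), the argument above is essentially bookkeeping, the only genuinely substantive point being the compactness of $\mathscr H_R$, which reduces to the assumed compactness of $\mathscr H$ together with the boundedness of the retraction's range. If instead one wanted a self-contained proof, the real work would be proving Schauder's theorem itself — approximating the compact map by finite-rank maps through a partition of unity subordinate to a finite $\varepsilon$-net of the compact set $\overline{\mathscr H(\bar B)}$, applying Brouwer's fixed point theorem on the resulting finite-dimensional convex hull, and letting $\varepsilon \to 0$ — which is where all the topological content lies; I would treat this as known rather than reproving it here.
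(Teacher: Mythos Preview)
The paper does not actually prove this theorem: it states it as a classical result, cites the original references \cite{lera:scha:34} and \cite{scha:55}, and then only comments on why the absence of a contraction hypothesis is convenient for the application. There is therefore no ``paper's own proof'' to compare against.

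That said, your argument is correct and is exactly the standard derivation of Schaefer's theorem from Schauder's theorem via the radial retraction onto a large closed ball. The dichotomy in your final paragraph is the heart of the matter, and your check that $\mathscr H_R = \rho \circ \mathscr H$ inherits compactness from $\mathscr H$ (because $\rho$ is continuous and $\mathscr H(\bar B)$ is relatively compact) is the only point that needs any care. Treating Schauder's theorem as a black box is entirely appropriate here, since the paper itself treats the whole statement as a black box.
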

	This topological result has the major property that it does not require any contraction with usual Banach fixed-point theorem.
	That is why we succeed in getting global Schauder estimates without any assumption of smallness in $T$ or $\mathbf g$.
	\\
	
	All long the paper, we use some crucial Gaussian estimates stated in the following section.
	
		\subsection{Fundamental tools for the Gaussian function}
	\label{sec_Gaussian_properties}
	
	\subsubsection{Absorbing property and cancellation}
	
	Let us recall a well-known and important result about the Gaussian function: 
	for any $\delta >0$, there is $ C=C(\delta)>1$ such that:
	\begin{equation}\label{ineq_absorb}
	\forall x \in \R^d, \ |x|^\delta e^{-|x|^2} \leq Ce^{-C^{-1} |x|^2}.
	\end{equation}
	
	Furthermore, we will also often use the cancellation principle: for all $f \in C^\gamma$, $\gamma \in (0,1)$, $x \in \R^d$ and $\sigma>0$
	\begin{equation}\label{eq_cancell}
	D_x \int_{\R^d} e^{-\frac{|x-y|^2}{2\sigma}} f(y) dy =  \int_{\R^d} D_x e^{-\frac{|x-y|^2}{2\sigma }} [f(y)-f(x)] dy,
	\end{equation}
	as the Gaussian function, up to a renormalisation by a multiplicative constant, is a probabilistic distribution we get $D_x \int_{\R^d} e^{-\frac{|x-y|^2}{2\sigma}} dy =0$ and
	\begin{eqnarray}\label{ineq_cancel}
		(2\pi \sigma)^{\frac{d}{2}}\Big | D_x \int_{\R^d} e^{-\frac{|x-y|^2}{2 \sigma}} f(y) dy \Big | 
		&\leq& 
		(2\pi \sigma)^{\frac{d}{2}} [f]_\gamma  \int_{\R^d} e^{-\frac{|x-y|^2}{2\sigma}}\frac{|y-x|}{\sigma} |y-x|^\gamma dy
		\nonumber \\
		&\leq& 
		C
		(2\pi \sigma)^{\frac{d}{2}} [f]_\gamma \sigma^{\frac{\gamma-1}{2}} \int_{\R^d} e^{-C^{-1}\frac{|x-y|^2}{2\sigma}} dy
		\nonumber \\
		&=& C [f]_\gamma \sigma^{\frac{\gamma-1}{2}}.
	\end{eqnarray}
	The penultimate identity comes from the absorbing property \eqref{ineq_absorb}.

	For $\tilde G \psi (tx):=\int_0^t  \int_{\R^3} h_\nu (t-s,x -y) \psi(s,y) dy \, ds$, we also have the standard results.
	\begin{PROP}\label{prop_norm_hold_heat_kernel}
		There is a constant  $C=C(d,\gamma)>0$ such that for any $\zeta \in L^\infty([0,T];C^\gamma(\R^d,\R))$,  $\gamma \in (0,1]$, we have for the uniform norms controls
		\begin{equation}
		\|\tilde G \zeta\|_{L^\infty}
		\leq  C T \|\zeta\|_{L^\infty} ,
		\end{equation}
		for the spatial derivatives
		\begin{equation}
		\|D \tilde G \zeta\|_{L^\infty}
		\leq  C \nu^{\frac{-1+\gamma}{2}} T^{\frac{1+\gamma}{2}}\|\zeta\|_{L^\infty(C^\gamma)}, 
		\  
		\|D^2\tilde G \zeta\|_{L^\infty}
		\leq  C \nu^{-1+\frac{\gamma}{2}} T^{\frac \gamma 2} \|\zeta\|_{L^\infty(C^\gamma)}.
		\end{equation}
		We also have the H\"older moduli controls
		\begin{equation}\label{ineq_holder_prop_heat}
		\|D^2\tilde G \zeta\|_{L^\infty(C^\gamma)} 
		\leq   C \nu^{-1} \|f\|_{L^\infty(C^\gamma)},
		\
		\|\partial_t \tilde G \zeta\|_{L^\infty(C^\gamma)} 
		\leq    C \|\zeta\|_{L^\infty(C^\gamma)}.
		\end{equation}
	\end{PROP}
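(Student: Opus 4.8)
The plan is to prove Proposition \ref{prop_norm_hold_heat_kernel} by direct estimation on the convolution with the heat kernel $h_\nu(t,x)=(2\pi\nu t)^{-d/2}\exp(-|x|^2/(2\nu t))$, exploiting the Gaussian absorbing property \eqref{ineq_absorb} and the cancellation principle \eqref{eq_cancell}--\eqref{ineq_cancel} recalled just above. The key scaling facts I would use repeatedly are that for each $k\ge 0$, $\int_{\R^d}|D^k_x h_\nu(s,x)|\,dx \le C(\nu s)^{-k/2}$, and more generally $\int_{\R^d}|x|^\gamma|D^k_x h_\nu(s,x)|\,dx\le C(\nu s)^{(\gamma-k)/2}$, which follow by the change of variables $x\mapsto (\nu s)^{1/2}x$ together with \eqref{ineq_absorb}. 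The proof then splits into the four families of bounds in the statement, treated in increasing order of difficulty.

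\textbf{Step 1 (uniform bounds).} For $\|\tilde G\zeta\|_{L^\infty}$ I write $\tilde G\zeta(t,x)=\int_0^t\int_{\R^3}h_\nu(t-s,x-y)\zeta(s,y)\,dy\,ds$ and bound $|\zeta(s,y)|\le\|\zeta\|_{L^\infty}$, using $\int h_\nu(t-s,\cdot)=1$ to get the factor $T$. For $\|D\tilde G\zeta\|_{L^\infty}$ and $\|D^2\tilde G\zeta\|_{L^\infty}$ I differentiate under the integral, then apply the cancellation trick \eqref{eq_cancell}: $D^k_x h_\nu$ has vanishing integral (for $k=1,2$ the relevant moments vanish), so I may replace $\zeta(s,y)$ by $\zeta(s,y)-\zeta(s,x)$ (and, for the Hessian, absorb one derivative onto a first-order difference as in \eqref{ineq_cancel}), bound the increment by $\|\zeta\|_{L^\infty(C^\gamma)}|x-y|^\gamma$, and use the moment estimate to produce $(\nu(t-s))^{(\gamma-k)/2}$. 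Integrating $\int_0^t (t-s)^{(\gamma-k)/2}\,ds$ is finite since $\gamma\in(0,1]$ and $k\le 2$ (note $(\gamma-2)/2>-1$), yielding the powers $\nu^{(-1+\gamma)/2}T^{(1+\gamma)/2}$ and $\nu^{-1+\gamma/2}T^{\gamma/2}$ respectively.

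\textbf{Step 2 (H\"older modulus of $D^2\tilde G\zeta$).} This is the heart of the statement and the step I expect to be the main obstacle, since it is the genuine Schauder estimate and it requires care near the diagonal $s=t$. Fix $x,x'$ with $h:=|x-x'|$ and split the time integral at $t-h^2$ (or $t-ch^2/\nu$ to match the scaling). On the \emph{far} regime $s\in(0,t-h^2)$ I use the mean value theorem in $x$ on $D^2 h_\nu$: the increment costs $h\int|D^3_xh_\nu(t-s,\cdot-y)(\zeta(s,y)-\zeta(s,x))|\,dy\le C h\,\|\zeta\|_{L^\infty(C^\gamma)}(\nu(t-s))^{(\gamma-3)/2}$, and $\int_0^{t-h^2}(t-s)^{(\gamma-3)/2}\,ds\asymp h^{\gamma-1}$, producing $h^\gamma$ with the right $\nu$-power. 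On the \emph{near} regime $s\in(t-h^2,t)$ I estimate the two terms $D^2\tilde G\zeta(t,x)$ and $D^2\tilde G\zeta(t,x')$ separately, each by the cancellation argument of Step 1, getting $C\|\zeta\|_{L^\infty(C^\gamma)}\int_{t-h^2}^t(\nu(t-s))^{(\gamma-2)/2}\,ds\le C\nu^{-1}\|\zeta\|_{L^\infty(C^\gamma)}h^\gamma$. Summing the two regimes gives $[D^2\tilde G\zeta(t,\cdot)]_\gamma\le C\nu^{-1}\|\zeta\|_{L^\infty(C^\gamma)}$ uniformly in $t$, which is the claimed bound (with the harmless typo $\|f\|$ for $\|\zeta\|$ in \eqref{ineq_holder_prop_heat}). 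The subtlety is that on the near regime one must \emph{not} try to differentiate a third time; one uses the triangle inequality on the two space points instead, which is exactly what keeps the $\nu$-power at $\nu^{-1}$ rather than something worse.

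\textbf{Step 3 (H\"older modulus of $\partial_t\tilde G\zeta$).} Using the PDE $\partial_t h_\nu=\nu\Delta h_\nu$ (or directly the Duhamel identity $\partial_t\tilde G\zeta=\zeta+\nu\Delta\tilde G\zeta=\zeta+\nu D^2\tilde G\zeta:\mathrm{Id}$ in the heat case), I get $[\partial_t\tilde G\zeta(t,\cdot)]_\gamma\le[\zeta(t,\cdot)]_\gamma+\nu[D^2\tilde G\zeta(t,\cdot)]_\gamma\le\|\zeta\|_{L^\infty(C^\gamma)}+\nu\cdot C\nu^{-1}\|\zeta\|_{L^\infty(C^\gamma)}\le C\|\zeta\|_{L^\infty(C^\gamma)}$, i.e. the $\nu$ cancels and no $T$-factor survives, as stated. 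Throughout, all generic constants $C$ depend only on $d$ and $\gamma$, and the monotone growth of the bounds in $T$ is automatic from the explicit powers of $T$. A fully detailed version of Steps 1--3 is the content of Appendix Section \ref{sec_technical_lemma}.
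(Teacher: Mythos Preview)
Your proposal is correct and follows essentially the same route as the paper's Appendix proof: cancellation for the uniform bounds on $D\tilde G\zeta$ and $D^2\tilde G\zeta$, the diagonal/off-diagonal time split (mean value + third derivative on the far regime, triangle inequality on the near regime) for $[D^2\tilde G\zeta]_\gamma$, and the heat-equation identity $\partial_t\tilde G\zeta=\zeta+\text{second-order term}$ for the time-derivative estimate. One small refinement worth noting: your parenthetical suggestion to cut at $t-ch^2/\nu$ rather than $t-h^2$ is actually what yields the clean $\nu^{-1}$ power stated in \eqref{ineq_holder_prop_heat}; the paper cuts at $t-|x-x'|^2$ and obtains $\nu^{-1+\gamma/2}(1+\nu^{-1/2})$ in \eqref{ineq_D2_hold_prop_heat_kernel}, which differs slightly from its own statement.
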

	
	For the sake of completeness, the proof is recalled in Appendix.

	\subsubsection{Queues and integrability controls}
	
	In this section, we consider $d=r=3$ as for the ``semi" Navier-Stokes equation.
For any $g \in L^\infty(\R^3,\R)$ satisfying $\|g\|_{\beta}<+ \infty$, $\beta >0$, there is $C_\beta=C_\beta(\beta)>0$ such that 
\begin{eqnarray}\label{ineq_h_nu_beta}
&&
(1+|x|)^\beta \Big |\int_{\R^3} h_\nu (t,x-y) g(y) dy  \Big |
\nonumber \\
&\leq & 
2^\beta 	\int_{\R^3} |x-y|^\beta h_\nu (t,x-y) |g(y)| dy + 2^\beta  \int_{\R^3} h_\nu (t,x-y) (1+|y|)^\beta  |g(y)| dy 
\nonumber \\
&\leq & 
C_\beta  \int_{\R^3} (\nu t)^{\frac{\beta }{2}} h_{c\nu} (t,x-y) |g(y) |dy +2^\beta  \|  g \|_{\beta} 
\nonumber \\
&\leq & 
C_ \beta  (\nu t)^{\frac{\beta }{2}} \|g\|_{L^\infty}+ 2^\beta  \|  g \|_{\beta} 
\nonumber \\
&\leq & 
C_\beta   (1+[\nu t]^{\frac{\beta }{2}}) \|  g \|_{\beta} .
\end{eqnarray}

Also, for all $\varphi \in L^p(\R^3, \R^3)$, $ p \in (1, + \infty)$, by H\"older inequality we get for any $1<q<+ \infty$ such that $\frac{1}{p}+ \frac{1}{q}=1$ :
\begin{equation*}
\Big | \int_{\R^3} h_\nu (t,x-y) \varphi(y) dy \Big |
\leq 
\|\varphi\|_{L^p} \| h_\nu (t,x-\cdot )\|_{L^q},
\end{equation*}
with
\begin{equation}\label{ineq_h_nu_Lp}
\| h_\nu (t,x-\cdot )\|_{L^q}
= \Big ( \int_{\R^3} \frac{e^{-\frac{p |x-y|^2}{4 \nu t }}}{(4 \pi \nu t)^{\frac{3p}{2}}} dy \Big )^{\frac 1p} 
= \Big ( \frac{(p^{-1}4 \pi \nu t)^{\frac{3}{2}}} {(4 \pi \nu t)^{\frac{3p}{2}}} \Big )^{\frac 1p} 
= p^{-\frac{3}{2p}} (4 \pi \nu t)^{\frac{3}{2}(\frac{1-p}{p})}  .
\end{equation}
We can  write, thanks to integral Minkowski inequality, see for instance \cite{hard:litt:poly:52},
for any $p \geq 1$,
\begin{equation*}
	\Big \| \tilde G \psi (t,\cdot) 
	\Big \|_{L^p}
	\leq 
	\int_0^t  \Big \|  \int_{\R^3} h_\nu (t,\cdot -y) \psi(s,y) dy \Big \|_{L^p} ds
	\leq 
	\int_0^t  \|\psi(s,\cdot)\|_{L^p} ds.
\end{equation*}


	
	
			\subsection{A circular argument}
	\label{sec_classic_results}
	
	We provide a short result which is used in 
	the \textit{a priori} controls.
	\begin{lemma}\label{lemme_Taupe}
		For any $x \in \R_+$, if there are $a,b \in \R_+$ and $\eta \in (0,1)$ such that
		\begin{equation}\label{Taupe_ineq_lemme}
		x \leq a x^\eta + b,
		\end{equation}
		then 
		%
		\begin{equation*}
		x \leq 2 b+ 2^{\frac{1}{1-\eta}}a^{\frac{1}{1-\eta}} .
		\end{equation*}
	\end{lemma}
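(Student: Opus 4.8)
The plan is to split into the two cases determined by the size of $x$ relative to the two terms on the right-hand side of \eqref{Taupe_ineq_lemme}. First I would observe that since $a x^\eta + b$ is the sum of two nonnegative quantities, the hypothesis $x \leq a x^\eta + b$ forces $x$ to be dominated, up to a factor $2$, by at least one of them: either $x \leq 2b$, or $x \leq 2 a x^\eta$. In the first case we are immediately done, since $2b \leq 2b + 2^{1/(1-\eta)} a^{1/(1-\eta)}$.

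The substance is in the second case, $x \leq 2a x^\eta$. Here I would use that $\eta \in (0,1)$, so $1 - \eta > 0$ and we may divide: if $x = 0$ the conclusion is trivial, and if $x > 0$ then dividing by $x^\eta$ gives $x^{1-\eta} \leq 2a$, hence $x \leq (2a)^{1/(1-\eta)} = 2^{1/(1-\eta)} a^{1/(1-\eta)}$. Again this is bounded by the claimed right-hand side $2b + 2^{1/(1-\eta)} a^{1/(1-\eta)}$, so the lemma follows in both cases.

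There is essentially no obstacle here; the only point requiring a word of care is the dichotomy step, namely justifying that $x \leq a x^\eta + b$ implies $x \leq 2\max(a x^\eta, b)$. This is immediate: writing $u = a x^\eta$ and $v = b$, one has $x \leq u + v \leq 2\max(u,v)$, and then $x \leq 2\max(u,v)$ means $x \leq 2u$ or $x \leq 2v$. One should also note the degenerate situations $a = 0$ (then $x \leq b \leq 2b + \dots$) and $x = 0$ (trivial), both of which are absorbed by the argument above; in particular no division by zero occurs in the case $x \leq 2 a x^\eta$ with $x>0$, and when $x = 0$ there is nothing to prove. Assembling these observations gives the stated bound.
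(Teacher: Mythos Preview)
Your proof is correct and follows essentially the same approach as the paper: both use the dichotomy $x \leq 2\max(a x^\eta, b)$ and then treat the two resulting cases, with the non-trivial one resolved by dividing through by $x^\eta$. Your version is slightly more careful about the degenerate cases $x=0$ and $a=0$, but the argument is otherwise identical.
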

	\begin{proof}[Proof of Lemma \ref{lemme_Taupe}]
		
		From \eqref{Taupe_ineq_lemme}, we get
		\begin{equation*}
			x \leq 2 \max(a x^\eta , b),
		\end{equation*}
		there are two possibilities:
		\begin{equation*}
			\begin{cases}
			ax^\eta \leq b \ &\implies \ x \leq 2 b,
			\\
			ax^\eta \geq b \ &\implies \ x \leq 2^{\frac{1}{1-\eta}} a^{\frac{1}{1-\eta}},
			\end{cases}
		\end{equation*}
		the result follows.
		
	\end{proof}
	
	\mysection{Linear parabolic equation}
	\label{sec_linear}
	
	The goal of this section is to study the operator giving the solution of the linear parabolic equation
		\begin{equation}
\label{KOLMO}
	\begin{cases}
	\partial_t \mathbf u(t,x)+  \mathbf b(t,x)\cdot  \mathbf D \mathbf  u(t,x) -   \mathbf D^2 \mathbf u(t,x):a(t)+\textcolor{black}{\mathbf c(t,x) \otimes \mathbf u(t,x)}  =-\mathbf  f(t,x),\ (t,x)\in (0,T]\times \R^{d},\\
	\mathbf  u(0,x)= \mathbf  g(x),\ x\in \R^{d}.
	\end{cases}
	\end{equation}
	\textbf{\large{Assumptions}}
\begin{trivlist}
	\item[\A{E}]
	There is a positive real $\nu>0$ such that for all $x \in \R^d$ and $t \in [0,T]$
	\begin{equation*}
	\nu |x|^2 \leq \langle x a(t), x \rangle \leq \|a\|_{L^\infty} |x|^2 .
	\end{equation*}
\end{trivlist}

This uniform ellipticity hypothesis is important to ensure that the solution of \eqref{KOLMO} exists.
We could consider degenerate function $a$ with H\"ormander hypo-elliptic condition, see for instance \cite{chau:hono:meno:18}.

		\begin{THM}[Schauder estimates for linear parabolic equation] \label{THEO_SCHAU}
			Let us suppose \A{E}.
		For  $\gamma \in (0,1)$  be given.
		For all $\mathbf b \in L^\infty([0,T],C_b^\gamma(\R^d,\R^r \otimes \R^d))$, \textcolor{black}{$\mathbf c \in L^{\infty}([0,T],C_b^\gamma(\R^d,\R^r ))$}, $\mathbf f \in L^\infty([0,T];  C_b^{\gamma}(\R^d,\R^r))$ and $\mathbf g \in C^{2+\gamma}_b(\R^d,\R^r)$, there is 
	a	unique strong solution $\mathbf u$ lying in $L^\infty([0,T];C_b^{2+\gamma}(\R^{d},\R^r)) \cap C^1_b([0,T];C_b^{\gamma}(\R^{d},\R^r)) $ of \eqref{KOLMO}.
	\end{THM}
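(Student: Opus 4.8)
The plan is to establish Theorem \ref{THEO_SCHAU} by a continuity/fixed-point argument built on the probabilistic (Feynman-Kac) representation of the solution, following the same philosophy announced in the introduction. First I would treat the \emph{constant-coefficient heat operator} $\partial_t - D^2(\cdot):a(t)$ as the backbone: under \A{E} the associated flow is a time-inhomogeneous Gaussian with covariance $\int_s^t a(\tau)\,d\tau$, which is comparable (upper and lower) to $\nu(t-s)\,\mathrm{Id}$, so the Gaussian tools of Section \ref{sec_Gaussian_properties} (the absorbing property \eqref{ineq_absorb}, the cancellation identity \eqref{eq_cancell}--\eqref{ineq_cancel}, and Proposition \ref{prop_norm_hold_heat_kernel}) all apply verbatim with $h_\nu$ replaced by this kernel. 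The Duhamel representation then gives, for the full linear problem \eqref{KOLMO}, the fixed-point equation
\begin{equation*}
\mathbf u(t,x) = \mathbb E\Big[\mathbf g(X^{t,x}_0)\,\Phi^{t,x}_0\Big] + \mathbb E\Big[\int_0^t \big(\mathbf f(s,X^{t,x}_s)\big)\Phi^{t,x}_s\,ds\Big],
\end{equation*}
where $X^{t,x}$ solves the SDE with drift $-\mathbf b$ and diffusion $\sqrt{2a}$ (componentwise, since \eqref{KOLMO} decouples at the level of the second-order part), and $\Phi$ is the multiplicative Feynman-Kac exponential weight carrying the zero-order term $\mathbf c\otimes\mathbf u$. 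The uniform bound $\|\mathbf u\|_{L^\infty(\R^d)}\le e^{CT\|\mathbf c\|_{L^\infty}}(\|\mathbf g\|_{L^\infty}+T\|\mathbf f\|_{L^\infty})$ is then immediate from this formula.

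Second, for the \emph{a priori Schauder estimates} I would freeze $\mathbf b$, $\mathbf c$, $\mathbf u$ appearing as coefficients and run a parametrix/Duhamel bootstrap. Write $\mathbf u = \tilde G_a(\mathbf g) + \tilde G_a(\mathbf f - \mathbf b\cdot D\mathbf u - \mathbf c\otimes\mathbf u)$ with $\tilde G_a$ the Green operator of the constant-coefficient part. Proposition \ref{prop_norm_hold_heat_kernel} gives $\|D\tilde G_a\zeta\|_{L^\infty}\le C\nu^{(\gamma-1)/2}T^{(1+\gamma)/2}\|\zeta\|_{L^\infty(C^\gamma)}$ and $\|D^2\tilde G_a\zeta\|_{L^\infty(C^\gamma)}\le C\nu^{-1}\|\zeta\|_{L^\infty(C^\gamma)}$; applying the first of these to control $\|D\mathbf u\|_{L^\infty}$ in terms of $\|\mathbf u\|_{C^{2+\gamma}}^{\text{lower order}}$, and using the interpolation inequality $\|D\mathbf u\|_{C^\gamma}\le \epsilon\|\mathbf u\|_{C^{2+\gamma}}+C_\epsilon\|\mathbf u\|_{L^\infty}$, one gets a closed inequality of the form $\|\mathbf u\|_{L^\infty(C^{2+\gamma})}\le a\|\mathbf u\|_{L^\infty(C^{2+\gamma})}^{\eta}+b$ after splitting $[0,T]$ into short subintervals (so that the prefactors involving powers of the subinterval length are small) and iterating across them — or, more cleanly, absorbing via Lemma \ref{lemme_Taupe} once the nonlinear-looking term is really the product of a small constant with $\|\mathbf u\|_{C^{2+\gamma}}$. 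This yields $\|\mathbf u\|_{L^\infty(C_b^{2+\gamma})} + \|\partial_t\mathbf u\|_{L^\infty(C_b^\gamma)} \le \mathbf N(\ldots)$ depending on $\nu$, $T$, $\|\mathbf b\|_{L^\infty(C_b^\gamma)}$, $\|\mathbf c\|_{L^\infty(C_b^\gamma)}$, $\|\mathbf f\|_{L^\infty(C_b^\gamma)}$, $\|\mathbf g\|_{C_b^{2+\gamma}}$, with the control of $\partial_t\mathbf u$ then read off directly from the equation \eqref{KOLMO} once the spatial $C_b^{2+\gamma}$ bound is in hand.

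Third, I would promote the a priori estimate to genuine \emph{existence} via Theorem \ref{theo_schaef}: take $E = \{\mathbf u \in C([0,T];C_b^2(\R^d,\R^r))\}$ (or a suitable Hölder-in-time-and-space Banach space making the embedding into $E$ compact on bounded sets — using Arzelà–Ascoli and the a priori $C^{2+\gamma}$ control plus $C^1$-in-time control), and define $\mathscr H(\mathbf v)$ to be the solution of the \emph{linear} problem \eqref{KOLMO} with $\mathbf b$ fixed (here $\mathbf b$ is already a given datum, not depending on $\mathbf u$, so $\mathscr H$ is in fact affine and the map is straightforwardly continuous and compact); the scaling relation $\mathbf u = \mu\mathscr H(\mathbf u)$ corresponds to the equation with data $\mu\mathbf f$, $\mu\mathbf g$, whose solutions obey the same a priori bound uniformly in $\mu\in[0,1]$, giving the required $M$. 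Uniqueness follows by linearity: the difference $\mathbf w$ of two solutions solves \eqref{KOLMO} with $\mathbf f=0$, $\mathbf g=0$, so its Feynman-Kac representation forces $\mathbf w\equiv 0$ (a Grönwall argument on $\|\mathbf w(t,\cdot)\|_{L^\infty}$ absorbs the zero-order term $\mathbf c\otimes\mathbf w$).

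The main obstacle I expect is the self-referential structure of the Schauder estimate in the presence of the first-order term $\mathbf b\cdot D\mathbf u$: because $D\mathbf u$ is only $C^\gamma$ a priori and enters the source of the Duhamel formula, one must carefully track the powers of the time horizon (or subinterval length) generated by $\tilde G_a$ so that the coefficient multiplying $\|\mathbf u\|_{L^\infty(C^{2+\gamma})}$ on the right-hand side can be made strictly less than $1$, and then re-concatenate the short-time estimates without losing control of the constants — this is exactly where the bookkeeping with the increasing functions $\mathbf N_{(\cdot)}$ and the circular-argument Lemma \ref{lemme_Taupe} pays off. A secondary technical point is justifying the Feynman-Kac representation itself (well-posedness of the SDE with merely Hölder drift $\mathbf b$, and that the representation indeed solves \eqref{KOLMO} in the strong sense once the solution is known to be $C^{2+\gamma}$), which is standard but needs the a priori regularity as input, so the logical order must be: representation formula $\Rightarrow$ $L^\infty$ bound $\Rightarrow$ Schauder a priori bound $\Rightarrow$ Schaefer existence $\Rightarrow$ uniqueness.
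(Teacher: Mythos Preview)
Your proposal is broadly correct and follows the paper's philosophy, but the mechanism for closing the a priori estimates differs in a way worth noting. You suggest splitting $[0,T]$ into short subintervals so that the prefactor multiplying $\|\mathbf u\|_{L^\infty(C^{2+\gamma})}$ becomes $<1$, then concatenating. The paper deliberately avoids this: it builds the estimates \emph{hierarchically} in the order $\|\mathbf u\|_{L^\infty}$ (Feynman--Kac, crucially independent of $\mathbf b$) $\to$ $\|\mathbf D\mathbf u\|_{L^\infty}$ $\to$ $\|\mathbf D^2\mathbf u\|_{L^\infty}$ $\to$ $[\mathbf D^2\mathbf u]_\gamma$ $\to$ $\partial_t\mathbf u$, and at each level the right-hand side contains only quantities already controlled plus one self-referential term, which is closed by a \emph{global-in-time} Gr\"onwall inequality (for the gradient, where Duhamel produces the integrable kernel $[\nu(t-s)]^{-1/2}$) or by Lemma~\ref{lemme_Taupe} (for the Hessian, where interpolation yields $\|\mathbf D^2\mathbf u\|_{L^\infty}^\gamma$ with exponent $\gamma<1$). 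This is the ``parabolic bootstrap'' of the title: no short-time restriction ever enters, and each constant $\mathbf N_{(\cdot)}$ is an explicit increasing function of $T$ and of the \emph{minimal} set of norms of $\mathbf b,\mathbf c,\mathbf f,\mathbf g$ needed at that step. Your short-time absorption would also close, but it blurs exactly this sharp tracking --- and that tracking is what the paper later exploits to feed $\mathbf b=\mathbb A(\mathbf u)$ back in for the quasi-linear problem.

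Two smaller points. The paper does not invoke Schaefer's theorem for the \emph{linear} problem at all; existence and uniqueness are simply quoted from Friedman, and the content of Theorem~\ref{THEO_SCHAU} is really the refined constants in the estimates. And the paper handles the zero-order term $\mathbf c\otimes\mathbf u$ by keeping it as a source in the Feynman--Kac integral and applying Gr\"onwall to $\|\mathbf u(t,\cdot)\|_{L^\infty}$, rather than via a multiplicative weight $\Phi$; your version is equivalent, but for matrix-valued $\mathbf c$ the time-ordered exponential is slightly heavier to justify than the Gr\"onwall route.
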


	The control of $\|\mathbf u\|_{L^\infty(C_b^{2+\gamma})}$ is already known, see \cite{frie:64}, but the novelty here, is the sharpness in each control in terms of regularity required for $\mathbf b$, $\mathbf f$ and $\mathbf g$ in the Schauder estimates.
	After preliminaries in Section \ref{sec_identifiation_H}  to introduce corresponding notations, we develop in Section \ref{sec_Schauder_linear} the proof of these Schauder estimates.

	\begin{remark}
We can consider more general matrix diffusion $a$, precisely depending on the space $x$, but in this case we cannot derive a suitable probabilistic representation of the parabolic equation.
To bypass this problem, we can use a suitable \textit{proxy}. 
We introduce a \textit{freezing} parameter $\xi \in \R^d$ which allows us to linearise parabolic equation \eqref{KOLMO} around this freezing point.

\begin{equation*}
\begin{cases}
\partial_t \mathbf u (t,x) + \mathbf b(t, x)\cdot \mathbf D \mathbf u(t,x) 
+ {\rm Tr}\big( \mathbf  D^2\mathbf u (t,x) a(t,\xi)\big)
= \mathbf f(t, x) + \frac 12{\rm Tr}\big( \mathbf D^2\mathbf u(t,x) a_{\Delta}(t,x,\xi)\big),\\
 \mathbf u(0,x)=\mathbf g (x),
\end{cases}
\end{equation*}
where
\begin{equation*}
a_{\Delta}(t,x,\xi):=a(t,\xi)-a(t,x).
\end{equation*}
The idea is to consider a new source function $-f (t,x) + \frac 12{\rm Tr}\big( D^2\mathbf u(t,x)a_{\Delta} (t,x,\xi) \big)$ where the second term is supposed to have a small contribution.
This can be done thanks to a \textit{cut locus} like in the choice of the freezing point through a separation of a \textit{diagonal}/\textit{off-diagonal} regimes and for a small final time $T$. After this procedure, we could perform a circular argument to conclude as it was done in \cite{chau:hono:meno:18}.
	\end{remark}

	\subsection{Identification of the linear parabolic operator.}
	\label{sec_identifiation_H}

	Let us 
	define the corresponding parabolic differential operator for all
	$\varphi \in C_b^\infty(\R_+\times \R^d,\R^r)$ and $(t,x) \in [0,T] \times \R^d$:
	\begin{equation}\label{def_L_m}
	\mathbf L\varphi(t,x):=
	\mathbf b (t,x) \cdot \mathbf D  \varphi(t,x) + \mathbf D^2 \varphi(t,x):  a(t) .
	\end{equation}
	With our assumption, we know from \cite{frie:64} that equation \eqref{KOLMO} has a unique strong (point-wise) solution satisfying Schauder estimates. The following subsection set a constructive method in order to get Schauder estimates of $\mathbf u$.
	It remains to compute these Schauder estimates as sharp as possible.
	
	\subsection{Approximation procedure}\label{sectionGreen}
	
	We first suppose that $\mathbf g= \mathbf 0$, the study of the general case is performed further. 
	In other words, we consider for $x \in \R^d$ the following Cauchy problem
	\begin{equation}
	\label{KOLMO_approx}
	\begin{cases}
	\partial_t \mathbf u(t,x)+ \mathbf  b (t,x) \cdot \mathbf  D \mathbf u(t,x) + \mathbf  D^2 \mathbf u(t,x):   a(t)+\textcolor{black}{\mathbf c(t,x) \otimes \mathbf u(t,x)}=-\mathbf  f(t,x) 
	,\ t\in (0,T],
	\\
	\mathbf u(0,x)= \mathbf 0. 
	\end{cases}
	\end{equation}
	We approximate this Kolmogorov equation by the \textit{proxy} heat equation
	\begin{equation}
	\label{KOLMO_TILDE}
	\begin{cases}
	\partial_t \mathbf {\tilde u}(t,x)+ \mathbf {\tilde L}_t \mathbf {\tilde  u}(t,x)=-\mathbf  f(t,x), 
	\\
	\mathbf  {\tilde u} (0,x)=\mathbf  0, 
	\end{cases}
	\end{equation}
	where for all $\varphi \in C_b^\infty(\R_+\times \R^d,\R^r)$ and $(t,x) \in [0,T] \times \R^d$:
	\begin{equation}
		\mathbf  {\tilde L}\varphi(t,x):=
	\mathbf  D^2 \varphi(t,x):   a(t).\label{DEF_TILDE_L}
	\end{equation}
	The associated heat kernel is the Gaussian density:
	\begin{eqnarray}\label{def_tilde_p}
	\tilde p (s,t,x,y)
	&=& \frac{1}{(4\pi)^{\frac d 2}\det(A_{s,t})^{\frac{ 1}{2}} } 
	\exp\left( -\frac {1}4 \left\langle A_{s,t}^{-1}(x-y),x-y\right\rangle\right),
	\end{eqnarray}
	where
	\begin{equation}\label{def_A_st_xi}
	A_{s,t}:= \int_t^s  a(\tilde s) d\tilde s.
	\end{equation}
	Observe that because for any $t \in [0,T]$, ${\det }\big (  a(t)\big )>0$ from assumption \A{E},   the kernel $\tilde p(s,t,x,y)$ is a probabilistic density.
	In particular, for all $x \in \R^d$ and $s,t \in [0,T]$ we have $\int_{\R^d} \tilde p(s,t,x,y) dy=1$ and $\tilde p(s,t,x,y)>0$, for any $y \in \R^d$. Moreover, from assumption \A{UE} we have:
	\begin{equation}\label{FIRST_CTR_DENS}
	|\tilde p(s,t,x,y)| 
	\leq 
	\big (4\pi \nu (s-t)\big )^{-\frac{d}2}\exp\left(- \frac{|x-y|^2}{4 \|a\|_{L^\infty} (s-t) } \right)
	=:  \bar p(s,t,x,y).
	\end{equation}
	and from \eqref{ineq_absorb},  for each $\alpha\in \N^d$ there is a constant $C=C(\alpha,d)>1$ s.t.
	\begin{eqnarray}\label{FIRST_deriv_CTR_DENS}
	|D^\alpha \tilde p^{\xi}(s,t,x,y)| 
	&\leq&
		C \big (4\pi \nu (s-t)\big )^{-\frac{d}2} \big ( \nu (s-t)\big ) ^{-\frac {|\alpha|}2} \exp\left(- C^{-1}\frac{|x-y|^2}{4 \|a\|_{L^\infty} (s-t) } \right)
		\nonumber \\
		&=:&
	  C
	\big ( \nu (s-t)\big ) ^{-\frac {|\alpha|}2}\bar p_{C^{-1}  }(s,t,x,y).
	\end{eqnarray}
	For the sake of notational simplicity, we will identify $\bar p_{C^{-1}  }(s,t,x,y)$ with $\bar p(s,t,x,y)$.

	For any $ \mathbf f\in C^{1,2}_0((0,T]\times \R^{d},\R^r )$, 
	we define the \textit{proxy} Green operator:
	\begin{equation}\label{GREEN_KERNEL}
	\forall (t,x) \in (0,T]\times\R^{d}, \ \tilde G \mathbf f(t,x) := \int_0^{t}  \int_{\R^{d}}  \tilde{p}(s,t, x,y) \mathbf f(s,y) \ dy \ ds.
	\end{equation}
	We define as well,  the corresponding semi-group, i.e. for any $ \mathbf g\in C^{2}_0( \R^{d},\R^r)$, 
	\begin{equation}\label{def_tilde_P}
	\tilde P \mathbf g(t,x):=\int_{\R^{d}}\tilde p (0,t,x,y) \mathbf g(y) \ dy.
	\end{equation}

	We are now in position to give the PDE associated with the density $\tilde p(s,t,s,y)$, and $\tilde G \mathbf  f$.
	
	\begin{PROP} \label{QUASI_CAUCHY_FROZEN}
		Let $\mathbf f$ in $C^{1,2}_0(\R^{d},\R^r)$ 
		then,  
		point-wisely, for any $s \in [0,T]$,
		\begin{equation}\label{relation differentielle}
		\forall (t,x,z) \in (s,T] \times (\R^{d})^2,  \Big(\partial_t + \mathbf {\tilde{L}}_t \Big) \tilde{p} (s,t,x,z) = 0,
		\end{equation}
		and
		\begin{equation}\label{relation differentielle 2}
		\begin{cases}
		\partial_t \tilde G \mathbf f(t,x)+ \mathbf {\tilde L}_{t} \tilde G \mathbf  f(t,x) = \mathbf  f (t,x),\ \forall (t,x)\in (0,T]\times \R^{d},\\
		\tilde G \mathbf  f(0,x)=\mathbf  0,
		\end{cases}
		\end{equation}
		the above differential relation is to be understood point-wise.
	\end{PROP}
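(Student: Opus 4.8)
The plan is to establish the kernel identity \eqref{relation differentielle} by directly differentiating the explicit Gaussian \eqref{def_tilde_p}, and then to deduce the Duhamel identity \eqref{relation differentielle 2} from it together with the mass‑one and concentration properties of $\tilde p$. For the first step, fix $s$, take $t>s$ (so that $A_{s,t}$ in \eqref{def_A_st_xi} is symmetric positive definite by \A{E}), and abbreviate $A:=A_{s,t}$, $w:=x-z$. From \eqref{def_A_st_xi} one has $\partial_t A=-a(t)$, hence $\partial_t\det A=-\det A\,\mathrm{Tr}(A^{-1}a(t))$ and $\partial_t A^{-1}=A^{-1}a(t)A^{-1}$. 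Computing $D_x\tilde p=-\tfrac12 A^{-1}w\,\tilde p$ and then $D_x^2\tilde p$, one finds from \eqref{DEF_TILDE_L} that
\[
\tilde L_t\tilde p(s,t,x,z)=\Big(\tfrac14\big\langle A^{-1}a(t)A^{-1}w,w\big\rangle-\tfrac12\mathrm{Tr}\big(A^{-1}a(t)\big)\Big)\tilde p(s,t,x,z),
\]
while differentiating \eqref{def_tilde_p} in $t$ (using the two identities above) gives
\[
\partial_t\tilde p(s,t,x,z)=\Big(\tfrac12\mathrm{Tr}\big(A^{-1}a(t)\big)-\tfrac14\big\langle A^{-1}a(t)A^{-1}w,w\big\rangle\Big)\tilde p(s,t,x,z);
\]
the two right‑hand sides are opposite, which is exactly \eqref{relation differentielle}. (Probabilistically, $\tilde p$ is the transition kernel of the Gaussian flow with generator $\tilde L$, and \eqref{relation differentielle} is its backward Kolmogorov equation.)

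The second step derives \eqref{relation differentielle 2}. The initial condition is immediate: since $\int_{\R^d}\tilde p(s,t,x,y)\,dy=1$, \eqref{GREEN_KERNEL} yields $|\tilde G\mathbf f(t,x)|\le t\,\|\mathbf f\|_{L^\infty}\to0$ as $t\downarrow0$. For the equation, write $\tilde G\mathbf f(t,x)=\int_0^t v(s,t,x)\,ds$ with $v(s,t,x):=\int_{\R^d}\tilde p(s,t,x,y)\mathbf f(s,y)\,dy$. For each fixed $s<t$, differentiating under the $y$‑integral and using the first step gives $\partial_t v(s,t,x)+\tilde L_t v(s,t,x)=0$, and $v(s,t,x)\to\mathbf f(t,x)$ as $s\uparrow t$, because $\tilde p(s,t,x,\cdot)$ is a probability density concentrating at $x$ while $\mathbf f$ is Lipschitz continuous. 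The Leibniz rule for an integral with variable upper endpoint, combined with interchanging $D_x^2$ with $\int_0^t(\cdot)\,ds$, then gives
\[
\partial_t\tilde G\mathbf f(t,x)+\tilde L_t\tilde G\mathbf f(t,x)=\mathbf f(t,x)+\int_0^t\big(\partial_t v+\tilde L_t v\big)(s,t,x)\,ds=\mathbf f(t,x),
\]
which is \eqref{relation differentielle 2}.

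The one genuinely delicate point is to justify the two interchanges just used: the pointwise bound \eqref{FIRST_deriv_CTR_DENS} on $|D_x^2\tilde p(s,t,x,y)|$ carries a factor $(\nu(t-s))^{-1}$ which is \emph{not} integrable in $s$ near $t$, so $D_x^2$ (equivalently $\tilde L_t$) cannot be moved naively inside $\int_0^t(\cdot)\,ds$. Since $\mathbf f\in C^{1,2}_0$ is compactly supported in space and $a(t)$ does not depend on $y$, this spurious singularity is removed by integrating by parts twice in $y$, using $D_x\tilde p(s,t,x,y)=-D_y\tilde p(s,t,x,y)$:
\[
\int_{\R^d}\mathrm{Tr}\big(a(t)D_x^2\tilde p(s,t,x,y)\big)\mathbf f(s,y)\,dy=\int_{\R^d}\tilde p(s,t,x,y)\,\mathrm{Tr}\big(a(t)D_y^2\mathbf f(s,y)\big)\,dy,
\]
whose integrand is bounded by $\|a\|_{L^\infty}\|D^2\mathbf f\|_{L^\infty}$ uniformly in $s\in[0,t]$; this both identifies $\tilde L_t v(s,t,x)=-\partial_t v(s,t,x)$ and makes both $s$‑integrals absolutely convergent, so that dominated convergence and the classical theorems on differentiation under the integral sign apply. (Alternatively one may keep $D_x^2\tilde p$ and invoke the cancellation principle \eqref{eq_cancell}, replacing $\mathbf f(s,y)$ by $\mathbf f(s,y)-\mathbf f(s,x)$ and gaining a factor $(t-s)^{\gamma/2}$ that restores $s$‑integrability.) Once these interchanges are licensed, the remaining verifications — the approximate‑identity limit $v(s,t,x)\to\mathbf f(t,x)$, continuity in $(t,x)$, and the Leibniz rule — are routine.
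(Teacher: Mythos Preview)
Your proof is correct and in fact considerably more detailed than what the paper itself provides. The paper does not give an explicit proof of this proposition: it only remarks that obtaining \eqref{relation differentielle 2} ``requires the derivatives to be point-wise defined'' and that ``due to the specific form of $\tilde p$ and $a$, we directly perform this operation.'' Your explicit computation of $\partial_t\tilde p$ and $\tilde L_t\tilde p$ from the formula \eqref{def_tilde_p}, together with the Leibniz/Duhamel argument and the careful justification of the interchange of $D_x^2$ with $\int_0^t(\cdot)\,ds$ via integration by parts in $y$ (or, alternatively, via the cancellation trick \eqref{eq_cancell}), fills in exactly what the paper leaves implicit. The integration-by-parts device you use---shifting the two spatial derivatives onto $\mathbf f$ by exploiting $D_x\tilde p=-D_y\tilde p$ and the compact support of $\mathbf f$---is the cleanest way to tame the non-integrable $(t-s)^{-1}$ singularity, and is precisely the kind of ``direct'' manipulation the paper alludes to without writing out.
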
	

	We carefully  point out that 
	obtaining \eqref{relation differentielle 2} 
	requires the derivatives to be point-wise defined. 
	Due to the specific form of $\tilde{p}$ and $a$,
	we  directly perform this operation. 
	

	\subsection{Schauder estimates}\label{sec_Schauder_linear}
	
	We  describe, in this section, the various steps that will lead to our main result and provide a new approach to Schauder's estimates.
For the uniform control of the solution, we crucially use a stochastic representation, which allows to regard the fundamental solution of the PDE as a probability density.
Next, for the other Schauder controls we combine the Duhamel formula, considered as a perturbative formula around the heat equation, with the already estimated uniform norm through a Gr\"onwall lemma.

\subsubsection{Feynman-Kac representation}
\label{sec_Feynman_Kac}

For the particular case of the uniform $L^\infty$ control of the solution of the smooth linear parabolic equation \eqref{KOLMO}, we can readily use the Feynman-Kac formula associated with the stochastic process:
\begin{equation}\label{def_Xt}
dX_t = \begin{cases}
	d X_t^1 &= -\mathbf b_1(T-s,X_t^1) ds + \sqrt{2}\sigma(t)d B_t^1,
	 \\
	 &\vdots 
	 \\
	 d X_t^r &=  - \mathbf  b_r(T-s,X_t^r) dt + \sqrt{2}\sigma(t) dB_t^r,
	\end{cases}
	\in \R^d \otimes \R^r,
\end{equation}
where the coefficients are such that
\begin{eqnarray*}
	\mathbf b &=& (\mathbf b_i)_{1 \leq i \leq r},
	\nonumber \\
	a &=& \sigma^* \sigma,
\end{eqnarray*}
where $\sigma$ matches with the Cholesky decomposition of the second order term $a$ of the linear parabolic equation \eqref{KOLMO}, which is still regular thanks to the elliptic hypothesis \A{E}.

\begin{lemma}
For any $(t,x) \in [0,T] \times \R^3$, we can write the solution to \eqref{KOLMO} by
	\begin{equation*}
	\mathbf u (t,x)= \E_{X_t=x} \Big [\mathbf g (X_T) \Big ]
	- \E_{X_t=x}  \Big [\int_0^t \mathbf f (s,X_{T-s}) - \textcolor{black}{{\mathbf c}(s,X_{T-s}) \otimes {\mathbf u}(s,X_{T-s})}ds \Big ] .
	\end{equation*}
\end{lemma}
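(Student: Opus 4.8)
The plan is to establish the Feynman-Kac representation by combining the PDE satisfied by $\mathbf u$ with It\^o's formula applied to the process $X$, component by component. First I would note that the system \eqref{KOLMO} decouples in the second-order part: each component $\mathbf u_i$ solves a scalar equation $\partial_t \mathbf u_i + \mathbf b_i \cdot D\mathbf u_i - D^2\mathbf u_i : a(t) + (\mathbf c(t,\cdot)\otimes \mathbf u)_i = -\mathbf f_i$, where the zeroth-order coupling term $(\mathbf c\otimes\mathbf u)_i = \sum_j \mathbf c_{ij}\mathbf u_j$ is to be treated as an (a priori known) source once we work with the full vector $\mathbf u$. Since Theorem \ref{THEO_SCHAU} guarantees $\mathbf u \in L^\infty([0,T];C_b^{2+\gamma})\cap C_b^1([0,T];C_b^\gamma)$, the function $\mathbf u$ is smooth enough in $x$ and $C^1$ in $t$ for It\^o's formula to apply without mollification.

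Next I would introduce the time-reversal. Fix $(t,x)$ and set $v_i(s,y) := \mathbf u_i(s,y)$ evaluated along the process started at time parameter $s=0$ with $X_0 = x$ running up to parameter $s=t$; the drift in \eqref{def_Xt} is written with argument $T-s$, which signals that the natural object is $\mathbf u(t-s, \cdot)$ or $\mathbf u(s,\cdot)$ read against a reversed clock. Concretely, I would apply It\^o's formula to $s \mapsto \mathbf u_i(t-s, X_s^i)$ on $[0,t]$ (or, equivalently, follow the paper's convention and track $\mathbf u_i(s, X_{T-s}^i)$). The generator of $X^i$ is $\mathbf v \mapsto -\mathbf b_i(T-\cdot, \cdot)\cdot D\mathbf v + \operatorname{Tr}(a\, D^2\mathbf v)$ because $a = \sigma^*\sigma$ makes the diffusion coefficient $\sqrt{2}\sigma$ produce exactly the operator $D^2(\cdot):a$. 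Matching this generator against the PDE, the drift term $-\partial_t + (\text{generator})$ acting on $\mathbf u_i$ collapses, using \eqref{KOLMO}, to $-\big(\mathbf f_i + (\mathbf c\otimes\mathbf u)_i\big)$ evaluated along the path, so that
\begin{equation*}
\mathbf u_i(t, x) = \E_{X_t = x}\Big[ \mathbf u_i(0, X_0) \Big] - \E_{X_t = x}\Big[ \int_0^t \big(\mathbf f_i - (\mathbf c\otimes\mathbf u)_i\big)(s, X_{T-s})\, ds \Big],
\end{equation*}
after taking expectations (the stochastic integral $\int_0^\cdot \sqrt{2}\, D\mathbf u_i(\cdots)\sigma\, dB^i$ is a true martingale since $D\mathbf u_i$ is bounded and $\sigma$ is bounded by \A{E}), and then using the initial condition $\mathbf u(0,\cdot) = \mathbf g$. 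Assembling the $r$ components and recalling the definition \eqref{def_c_otimes_u} of $\mathbf c\otimes\mathbf u$ gives the stated vector identity.

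The main obstacle, and the point requiring care, is bookkeeping the time-reversal consistently: one must be sure that the process in \eqref{def_Xt} with drift evaluated at $T-s$ is the correct one so that $\mathbf u(t,x)$ (and not $\mathbf u(T-t,x)$ or some shifted version) appears on the left, and that the conditioning $\E_{X_t = x}$ is interpreted as "the process whose value at clock-parameter $t$ equals $x$" rather than at time $0$. This is purely a matter of fixing conventions, but it is where sign errors and index slips creep in; once the clock is pinned down, the It\^o computation and the martingale argument are routine. A secondary (minor) point is justifying the martingale property of the It\^o integral, which follows immediately from boundedness of $D\mathbf u$ and of $\sigma$ together with the Gaussian integrability of $X$ on $[0,T]$; no localization is needed because all coefficients are bounded with the regularity coming from Theorem \ref{THEO_SCHAU}.
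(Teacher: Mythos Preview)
Your proposal is correct and follows essentially the same route as the paper: apply It\^o's formula to the solution evaluated along the diffusion $X$, use the PDE \eqref{KOLMO} to collapse the drift terms into the source $-\mathbf f + \mathbf c\otimes\mathbf u$, and take expectation to eliminate the stochastic integral. The only cosmetic difference is that the paper makes the time-reversal explicit by introducing $\tilde{\mathbf u}(t,x):=\mathbf u(T-t,x)$, which turns the forward Cauchy problem into a backward one with terminal datum $\mathbf g$ at time $T$, applies It\^o to $\tilde{\mathbf u}(s,X_s)$ on $[t,T]$, and then undoes the substitution via $\tilde s=T-s$; this is precisely the bookkeeping you flag as the main obstacle, and the paper's device is a clean way to pin it down.
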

\begin{proof}
For any $(t,x) \in [0,T]\times \R ^d$, let us introduce $\tilde {\mathbf u}(t,x):= \mathbf u(T-t,x)$ which solves:
\begin{equation}\label{NS_LAMBDA_SHORT_tilde}
\begin{cases}
-\partial_t \tilde {\mathbf u}(t,x) + \tilde {\mathbf b} (t,x)\cdot \mathbf D  \tilde {\mathbf u} (t,x) +\textcolor{black}{\tilde {\mathbf c}(t,x) \otimes \tilde {\mathbf u}(s,x)}
= \mathbf D^2 \tilde {\mathbf u}(t,x):\tilde a(t)+\tilde {\mathbf f}(t, x), \, t\in [0,T) ,\\
\tilde {\mathbf u}(T,x)= \mathbf g (x),
\end{cases}
\end{equation}
with $\tilde {\mathbf f}(t, x):= \mathbf f(T- t, x)$, $\tilde {\mathbf b}(t, x):= \mathbf b(T- t, x)$, $\tilde {\mathbf c}(t, x):= \mathbf c(T- t, x)$  and  $\tilde a(t):=a(T-t)$.
With these notations, it is well-known that for bounded continuous matrix diffusion (which is guaranteed by assumptions on $a$) and for $\tilde {\mathbf b} \in L^\infty([0,T]; C^\gamma_b(\R^d,\R^r\otimes \R^d))$ continuous in time, we can apply Itô's lemma 
\begin{eqnarray*}
&&\tilde {\mathbf u}(T,X_{T}) - \tilde {\mathbf u}(t,X_{t})
\nonumber \\
&=&\int_t^T \Big (  \partial_t  \tilde {\mathbf u}(s,X_{s})-\tilde {\mathbf b} (s,X_{s})\cdot \mathbf D \tilde {\mathbf u}(s,X_{s}) + \mathbf D^2 \tilde {\mathbf u}(s,X_{s}): \tilde a(s) \Big )ds
+ \int_t^T   \tilde \sigma (s) \mathbf D \tilde {\mathbf u}(s,X_{s}) d B_s.
\end{eqnarray*}
Because $\tilde {\mathbf u}$ is solution of \eqref{NS_LAMBDA_SHORT_tilde}, we obtain
\begin{equation*}
\tilde {\mathbf u}(t,X_{t})=
\mathbf g(X_{T}) 
- \int_t^T   \tilde {\mathbf f}(s,X_{s}) - \textcolor{black}{\tilde {\mathbf c}(s,X_s) \otimes \tilde {\mathbf u}(s,X_s)} ds
- \int_t^T \tilde \sigma (s)   \mathbf D \tilde {\mathbf u}(s,X_{s}) d B_s.
\end{equation*}
Taking the expecting value, we get
\begin{equation*}
\tilde {\mathbf u}(t,x)=\E_{X_t=x}[\tilde {\mathbf u}(t,X_{t})]=
\E_{X_t=x}[\mathbf g(X_{T}) ]
-  \E_{X_t=x}\Big [  \int_t^T   \tilde {\mathbf f} (s,X_{s})- \textcolor{black}{\tilde {\mathbf c}(s,X_s) \otimes \tilde {\mathbf u}(s,X_s)}ds \Big ],
\end{equation*}
by definition and by variable change, $\tilde s = T-s$, we can write 
\begin{equation*}
\mathbf u( t,x)
=
\tilde {\mathbf u}(T-t,x)=
\E_{X_t^\lambda=x}[\mathbf g (X_{T}) ]
-  \E_{X_t=x}\Big [  \int_0^t   \tilde {\mathbf f}(T-\tilde s,X_{T-\tilde s})- \textcolor{black}{\tilde {\mathbf c}(T-s,X_{T-s}) \otimes \tilde {\mathbf u}(T-s,X_{T-s})}d\tilde s \Big ].
\end{equation*}
The result follows directly.

\end{proof}
This formulation readily leads to the control:
\begin{equation*}
\|\mathbf u(t,\cdot) \|_{L^\infty}
\leq 
\|\mathbf g\|_{L^\infty}+
t \|\mathbf f \|_{L^\infty} + \int_0^t \|\mathbf c(s,\cdot)\|_{L^\infty}\|\mathbf u(s,\cdot)\|_{L^\infty} ds
.
\end{equation*}
Next, by Gr\"onwall's lemma we get
\begin{eqnarray}\label{unif_u_Feynman_Kac}
\|\mathbf u(t,\cdot) \|_{L^\infty}
&\leq& 
\Big (\|\mathbf g\|_{L^\infty}+
t \|\mathbf f \|_{L^\infty} \Big ) \exp \Big ( \int_0^t \|\mathbf c(s,\cdot)\|_{L^\infty} ds\Big )
\nonumber \\
&\leq &
\Big (\|\mathbf g\|_{L^\infty}+
t \|\mathbf f \|_{L^\infty} \Big ) \exp \Big ( t \|\mathbf c \|_{L^\infty} \Big )
\nonumber \\
&=:& \mathbf N_{\eqref{unif_u_Feynman_Kac}}(t, \|\mathbf f\|_{L^\infty},\|\mathbf g\|_{L^\infty}, \|\mathbf c\|_{L^\infty})
.
\end{eqnarray}

Let us to point out that the above control \eqref{unif_u_Feynman_Kac} does not depend on $\mathbf b$ (neither on $a$). This crucial fact allows us to perform suitable \textit{a priori} controls for a fixed-point argument to get solution of non-linear equations.


\subsubsection{Control of gradient}
\label{sec_grad_linear}

From Duhamel formula, we readily write
\begin{eqnarray*}
| \mathbf D \mathbf u(t,x) | &\leq& C [\nu^{-1} t]^{\frac{1}{2}} \| \mathbf f \|_{L^\infty}+ \| \mathbf D \mathbf g \|_{L^\infty} + \Big | \int_0^t \int_{\R^d} \mathbf D \tilde{p}(s,t,x,y) \otimes \big ( \mathbf  b (s,y) \cdot \mathbf  D \mathbf u(s,y) \big )dy \, ds \Big |
\nonumber \\
&& 
+ \Big | \int_0^t \int_{\R^d} \mathbf D \tilde{p}(s,t,x,y) \otimes \big ( \mathbf c (s,y) \otimes  \mathbf u(s,y) \big ) dy \, ds \Big |.
\end{eqnarray*}
Hence,
\begin{eqnarray*}
\| \mathbf D \mathbf u(t,\cdot ) \|_{L^\infty} &\leq &C [\nu^{-1} t]^{\frac{1}{2}} \| \mathbf f \|_{L^\infty}+ \| \mathbf D \mathbf g \|_{L^\infty} + C \|\mathbf b \|_{L^\infty} \int_0^t [\nu (t-s)]^{-\frac{1}{2}} \|\mathbf  D \mathbf u(s,\cdot )\|_{L^\infty}  ds
\nonumber \\
&&+ C \mathbf N_{\eqref{unif_u_Feynman_Kac}}(t, \|\mathbf f\|_{L^\infty},\|\mathbf g\|_{L^\infty}, \|\mathbf c\|_{L^\infty}) \int_0^t \|\mathbf c(s,\cdot)\|_{L^\infty}ds .
\end{eqnarray*}
By Gr\"onwall's lemma, we then derive
\begin{eqnarray}\label{ineq_grad_linear}
&&\| \mathbf D \mathbf u(t,\cdot ) \|_{L^\infty} 
\nonumber \\
&\leq& C \Big ([\nu^{-1} t]^{\frac{1}{2}} \| \mathbf f \|_{L^\infty}+ \| \mathbf D \mathbf g \|_{L^\infty} +\mathbf N_{\eqref{unif_u_Feynman_Kac}}(t, \|\mathbf f\|_{L^\infty},\|\mathbf g\|_{L^\infty}, \|\mathbf c\|_{L^\infty}) \int_0^t \|\mathbf c(s,\cdot)\|_{L^\infty}ds  \Big )
\exp \big ( C \|\mathbf b \|_{L^\infty}  [\nu^{-1} t]^{\frac{1}{2}} \big ) 
\nonumber \\
&=:& \mathbf N_{\eqref{ineq_grad_linear}} (t,\|\mathbf b \|_{L^\infty}, \|\mathbf f \|_{L^\infty}, \| \mathbf g \|_{C^1_b},  \|\mathbf c\|_{L^\infty}).
\end{eqnarray}

\subsubsection{Control of Hessian}
\label{sec_D2_linear}

Similarly, we obtain from cancellation techniques, see \eqref{ineq_cancel}
\begin{eqnarray*}
\| \mathbf D^2 \mathbf u(t,\cdot ) \|_{L^\infty} 
&\leq& C \nu^{-1+ \frac \gamma 2} t^{\frac{\gamma}{2}} \| \mathbf f \|_{L^\infty(C^\gamma)}+ \| \mathbf D^2 \mathbf g \|_{L^\infty} + C \int_0^t [\nu (t-s)]^{-1+\frac{\gamma}{2}} [\mathbf  b (s,\cdot ) \cdot \mathbf  D \mathbf u(s,\cdot ) ]_\gamma  ds
\nonumber \\
&& 
+ \Big | \int_0^t [\nu (t-s)]^{-1+\frac{\gamma}{2}} [ \mathbf c (s,\cdot) \otimes  \mathbf u(s,\cdot) ]_\gamma ds \Big | ,
\end{eqnarray*}
with
\begin{eqnarray*}
	[\mathbf  b (s,\cdot ) \cdot \mathbf  D \mathbf u(s,\cdot ) ]_\gamma
&\leq& 	\|\mathbf  b \|_{L^\infty(C^\gamma)} 	\|\mathbf  D \mathbf u  \|_{L^\infty}+	\|\mathbf  b  \|_{L^\infty}[\mathbf  D \mathbf u(s,\cdot ) ]_\gamma,
\nonumber \\
\phantom{Bhou}	[\mathbf  c (s,\cdot ) \otimes  \mathbf u(s,\cdot ) ]_\gamma
&\leq& 	\|\mathbf  c\|_{L^\infty(C^\gamma)} 	\|\mathbf u  \|_{L^\infty}+	\|\mathbf  c  \|_{L^\infty}[ \mathbf u(s,\cdot ) ]_\gamma.
\end{eqnarray*}
By interpolation inequality, we write 
\begin{eqnarray}\label{ineq_Du_Holder_linear1}
	[\mathbf  D \mathbf u(s,\cdot ) ]_\gamma
	&\leq&  2^{1-\gamma}	\mathbf N_{\eqref{ineq_grad_linear}} (t,\|\mathbf b \|_{L^\infty}, \|\mathbf f \|_{L^\infty}, \| \mathbf g \|_{C^1_b},  \|\mathbf c\|_{L^\infty})^{1-\gamma}
	\|\mathbf  D^2 \mathbf u(s,\cdot  )\|_{L^\infty}^\gamma,
	\\
\phantom{Bhou}	[ \mathbf u(s,\cdot ) ]_\gamma
	&\leq & 2^{1-\gamma} \mathbf N_{\eqref{unif_u_Feynman_Kac}}(t, \|\mathbf f\|_{L^\infty},\|\mathbf g\|_{L^\infty}, \|\mathbf c\|_{L^\infty})^{1-\gamma}
	\mathbf N_{\eqref{ineq_grad_linear}} (t,\|\mathbf b \|_{L^\infty}, \|\mathbf f \|_{L^\infty}, \| \mathbf g \|_{C^1_b},  \|\mathbf c\|_{L^\infty})^\gamma
	\nonumber 
	\\ 
	&=:& \mathbf N_{\eqref{ineq_Holder_linear}}(t, \|\mathbf f\|_{L^\infty},\|\mathbf g\|_{C_b^1}, \|\mathbf c\|_{L^\infty}).
	\label{ineq_Holder_linear}
\end{eqnarray}
Then, we obtain
\begin{eqnarray}\label{ineq_Hess_linear_prelim}
&&\| \mathbf D^2 \mathbf u(t,\cdot)\|_{L^\infty}
\nonumber \\
 &\leq& 
C\nu^{-1+ \frac \gamma 2} t^{\frac{\gamma}{2}}  \| \mathbf f \|_{L^\infty(C^\gamma)}+ \| \mathbf D^2 \mathbf g \|_{L^\infty} 
+ C \nu^{-1+ \frac \gamma 2} t^{\frac{\gamma}{2}}  \Big (
\|\mathbf  b \|_{L^\infty(C^\gamma)} \mathbf N_{\eqref{ineq_grad_linear}} (t,\|\mathbf b \|_{L^\infty}, \|\mathbf f \|_{L^\infty}, \| \mathbf g \|_{C^1_b},  \|\mathbf c\|_{L^\infty})
\nonumber \\
&&+ 2^{1-\gamma}
\|\mathbf  b  \|_{L^\infty}\mathbf N_{\eqref{unif_u_Feynman_Kac}}(t, \|\mathbf f\|_{L^\infty},\|\mathbf g\|_{L^\infty}, \|\mathbf c\|_{L^\infty})^{1-\gamma}
\|\mathbf  D^2 \mathbf u\|_{L^\infty}^\gamma \Big )
\nonumber \\
&&
+ 	\|\mathbf  c\|_{L^\infty(C^\gamma)} \mathbf N_{\eqref{unif_u_Feynman_Kac}}(t, \|\mathbf f\|_{L^\infty},\|\mathbf g\|_{L^\infty}, \|\mathbf c\|_{L^\infty})
+ 	\|\mathbf  c  \|_{L^\infty} \mathbf N_{\eqref{ineq_Holder_linear}}(t, \|\mathbf f\|_{L^\infty},\|\mathbf g\|_{C_b^1}, \|\mathbf c\|_{L^\infty})
\nonumber \\
&=:&
\mathbf N_{\eqref{ineq_Hess_linear_prelim}}(t, \|\mathbf f\|_{L^\infty(C^\gamma_b)},\|\mathbf g\|_{C^2_b}, \|\mathbf c\|_{L^\infty(C^\gamma_b)})
\nonumber \\
&&
+  C \nu^{-1+ \frac \gamma 2} t^{\frac{\gamma}{2}} 
\|\mathbf  b  \|_{L^\infty}\mathbf N_{\eqref{unif_u_Feynman_Kac}}(t, \|\mathbf f\|_{L^\infty},\|\mathbf g\|_{L^\infty}, \|\mathbf c\|_{L^\infty})^{1-\gamma}
\|\mathbf  D^2 \mathbf u\|_{L^\infty}^\gamma .
\end{eqnarray}
Therefore, we then derive from Lemma \ref{lemme_Taupe},
\begin{eqnarray}\label{ineq_Hess_linear}
&&\| \mathbf D^2 \mathbf u(t,\cdot)\|_{L^\infty} 
\nonumber \\
&\leq& 
2
\mathbf N_{\eqref{ineq_Hess_linear_prelim}}(t, \|\mathbf f\|_{L^\infty(C^\gamma_b)},\|\mathbf g\|_{C^2_b}, \|\mathbf c\|_{L^\infty(C^\gamma_b)})
+  (2 C \nu^{-1+ \frac \gamma 2} t^{\frac{\gamma}{2}}  )^{\frac{1}{1-\gamma}}
\|\mathbf  b  \|_{L^\infty}\mathbf N_{\eqref{unif_u_Feynman_Kac}}(t, \|\mathbf f\|_{L^\infty},\|\mathbf g\|_{L^\infty}, \|\mathbf c\|_{L^\infty})
\nonumber \\
&=:& \mathbf N_{\eqref{ineq_Hess_linear}}(T, \|\mathbf b \|_{L^\infty (C^\gamma_b)}, \|\mathbf f \|_{L^\infty (C^\gamma_b)}, \|\mathbf g\|_{C^2_b}, \|\mathbf c\|_{L^\infty(C^\gamma_b)})  .
\end{eqnarray}

From this inequality and \eqref{ineq_Du_Holder_linear1}, we also deduce readily, by interpolation, that
\begin{eqnarray}\label{ineq_Du_Holder_linear}
[\mathbf  D \mathbf u(t,\cdot ) ]_\gamma
&\leq& 2^{1-\gamma} 	\mathbf N_{\eqref{ineq_grad_linear}} (t,\|\mathbf b \|_{L^\infty}, \|\mathbf f \|_{L^\infty}, \| \mathbf g \|_{C^1_b},  \|\mathbf c\|_{L^\infty})^{1-\gamma}
\nonumber \\
&&\times  \mathbf N_{\eqref{ineq_Hess_linear}}(T, \|\mathbf b \|_{L^\infty (C^\gamma_b)}, \|\mathbf f \|_{L^\infty (C^\gamma_b)}, \|\mathbf g\|_{C^2_b}, \|\mathbf c\|_{L^\infty(C^\gamma_b)})^\gamma
\nonumber \\
&=:& \mathbf N_{\eqref{ineq_Du_Holder_linear}}(T, \|\mathbf b \|_{L^\infty (C^\gamma_b)}, \|\mathbf f \|_{L^\infty (C^\gamma_b)}, \|\mathbf g\|_{C^2_b}, \|\mathbf c\|_{L^\infty(C^\gamma_b)}).
\end{eqnarray}

\subsubsection{Control of the H\"older modulus of the Hessian}
\label{sec_D2_Holder_linear}

From Proposition \ref{prop_norm_hold_heat_kernel},
\begin{eqnarray*}
\| \mathbf D^2 \mathbf u(t,\cdot)\|_{L^\infty(C^\gamma)}
 &\leq& C \nu^{-1+ \frac{\gamma}{2}}(1+\nu^{-\frac{1}{2}}) \| \mathbf f \|_{L^\infty(C^\gamma)}+ [\mathbf D^2 \mathbf g ]_\gamma 
 \nonumber \\
 && + C \nu^{-1+ \frac{\gamma}{2}}(1+\nu^{-\frac{1}{2}}) \Big ( \|\mathbf  b  \cdot \mathbf  D \mathbf u\|_{L^\infty(C^\gamma)}
 +  \|\mathbf  c  \otimes \mathbf u\|_{L^\infty(C^\gamma)} \Big )
 \nonumber \\
 &\leq& C \nu^{-1+ \frac{\gamma}{2}}(1+\nu^{-\frac{1}{2}}) \| \mathbf f \|_{L^\infty(C^\gamma)}+ [\mathbf D^2 \mathbf g ]_\gamma \nonumber \\
 &&+ C \nu^{-1+ \frac{\gamma}{2}}(1+\nu^{-\frac{1}{2}})  \Big ( \|\mathbf  b  \|_{L^\infty(C^\gamma)} \|\mathbf  D \mathbf u\|_{L^\infty}
 +  \|\mathbf  b  \|_{L^\infty} \| \mathbf  D \mathbf u\|_{L^\infty(C^\gamma)}
 \nonumber \\
 && +  \|\mathbf  c  \|_{L^\infty(C^\gamma)} \| \mathbf u\|_{L^\infty}
 +  \|\mathbf  c  \|_{L^\infty} \|  \mathbf u\|_{L^\infty(C^\gamma)}\Big )
 .
\end{eqnarray*}
Hence, we obtain,
\begin{eqnarray}\label{ineq_Hess_Holder_linear}
\| \mathbf D^2 \mathbf u(t,\cdot)\|_{L^\infty(C^\gamma)} 
 &\leq& C \nu^{-1+ \frac{\gamma}{2}}(1+\nu^{-\frac{1}{2}}) \| \mathbf f \|_{L^\infty(C^\gamma)}+ [\mathbf D^2 \mathbf g ]_\gamma 
\nonumber \\
&&+ C \nu^{-1+ \frac{\gamma}{2}}(1+\nu^{-\frac{1}{2}}) \Big ( \|\mathbf  b  \|_{L^\infty(C^\gamma)} \mathbf N_{\eqref{ineq_grad_linear}} (t,\|\mathbf b \|_{L^\infty}, \|\mathbf f \|_{L^\infty}, \|\mathbf D \mathbf g \|_{L^\infty}, \|\mathbf  c  \|_{L^\infty})
\nonumber \\
&&+  \|\mathbf  b  \|_{L^\infty} \| \mathbf N_{\eqref{ineq_Du_Holder_linear}} (t,\|\mathbf b \|_{L^\infty}, \|\mathbf f \|_{L^\infty}, \|\mathbf D \mathbf g \|_{L^\infty}, \|\mathbf  c  \|_{L^\infty(C^\gamma)} )
\nonumber \\
&& +  \|\mathbf  c  \|_{L^\infty(C^\gamma)} \mathbf N_{\eqref{unif_u_Feynman_Kac}} (t,\|\mathbf b \|_{L^\infty}, \|\mathbf f \|_{L^\infty}, \|\mathbf D \mathbf g \|_{L^\infty}, \|\mathbf  c  \|_{L^\infty})
\nonumber \\
&&+  \|\mathbf  c  \|_{L^\infty} \mathbf N_{\eqref{ineq_Holder_linear}} (t,\|\mathbf b \|_{L^\infty}, \|\mathbf f \|_{L^\infty}, \|\mathbf D \mathbf g \|_{L^\infty}, \|\mathbf  c  \|_{L^\infty}) \Big )
\nonumber \\
&=:&   \mathbf N_{\eqref{ineq_Hess_Holder_linear}}(T, \|\mathbf b \|_{L^\infty (C^\gamma_b)}, \|\mathbf f \|_{L^\infty (C^\gamma_b)}, \|\mathbf g\|_{C^{2+\gamma}_b}\|\mathbf  c  \|_{L^\infty(C_b^\gamma)} ).
\end{eqnarray}

\subsubsection{Control of the uniform norm of the time derivative}
\label{sec_partial_t_unif_linear}

It is direct from the Duhamel formula and Proposition \ref{prop_norm_hold_heat_kernel} that, for any $(t,x) \in [0,T] \times \R^d$,
\begin{eqnarray*}
| \partial_t \mathbf u(t,x) | 
&\leq &
\| \mathbf f \|_{L^\infty} + C \nu^{-1+ \frac{\gamma}{2}} t^{\frac{\gamma}{2}} \| \mathbf f \|_{L^\infty(C^\gamma)}
+\nu  \| \mathbf D^2 \mathbf g \|_{L^\infty} + \| \mathbf  b \cdot \mathbf  D \mathbf u  \|_{L^\infty}
+ C \nu^{-1+ \frac{\gamma}{2}}t^{\frac{\gamma}{2}}\| \mathbf  b \cdot \mathbf  D \mathbf u \|_{L^\infty(C^\gamma)}
\nonumber \\
&&+  \| \mathbf  c \otimes \mathbf u  \|_{L^\infty}
+ C \nu^{-1+ \frac{\gamma}{2}} t^{\frac{\gamma}{2}}\| \mathbf  c \otimes \mathbf u  \|_{L^\infty(C^\gamma)},
 .
\end{eqnarray*}
which yields that
\begin{eqnarray}\label{ineq_partial_t_unif_linear}
	| \partial_t \mathbf u(t,x) | 
	&\leq & 
	\| \mathbf f \|_{L^\infty} + C \nu^{-1+ \frac{\gamma}{2}} t^{\frac{\gamma}{2}} \| \mathbf f \|_{L^\infty(C^\gamma)}
	+ \| \mathbf D^2 \mathbf g \|_{L^\infty} 
	\nonumber \\
	&&+ C(\| \mathbf  b\|_{L^\infty}+t^{\frac{\gamma}{2}}\| \mathbf  b\|_{L^\infty(C^\gamma)}) \mathbf N_{\eqref{ineq_grad_linear}} (t,\|\mathbf b \|_{L^\infty}, \|\mathbf f \|_{L^\infty}, \|\mathbf D \mathbf g \|_{L^\infty}, \|\mathbf  c\|_{L^\infty} )
	\nonumber \\
	&&+ C \nu^{-1+ \frac{\gamma}{2}} t^{\frac{\gamma}{2}}
	\| \mathbf  b \|_{L^\infty} \mathbf N_{\eqref{ineq_Du_Holder_linear}}(t, \|\mathbf b \|_{L^\infty (C^\gamma_b)}, \|\mathbf f \|_{L^\infty (C^\gamma_b)}, \|\mathbf g\|_{C^2_b}, \|\mathbf  c\|_{L^\infty(C^\gamma)} )
\nonumber \\
&&+ C(\|\mathbf  c\|_{L^\infty} + \nu^{-1+ \frac{\gamma}{2}}t^{\frac{\gamma}{2}}	\|\mathbf  c\|_{L^\infty(C^\gamma)} )\mathbf N_{\eqref{unif_u_Feynman_Kac}}(t, \|\mathbf f\|_{L^\infty},\|\mathbf g\|_{L^\infty}, \|\mathbf c\|_{L^\infty})
\nonumber \\
&&+ 	\|\mathbf  c  \|_{L^\infty}
\mathbf N_{\eqref{ineq_Holder_linear}}(t, \|\mathbf f\|_{L^\infty},\|\mathbf g\|_{C_b^1}, \|\mathbf c\|_{L^\infty})
\nonumber \\
&=:&
\mathbf N_{\eqref{ineq_partial_t_unif_linear}} (t, \|\mathbf b \|_{L^\infty (C^\gamma_b)}, \|\mathbf f \|_{L^\infty (C^\gamma_b)}, \|\mathbf g\|_{C^2_b}, \|\mathbf  c\|_{L^\infty(C^\gamma)} )	.
\end{eqnarray}

\subsubsection{Control of the spatial H\"older of the time derivative}
\label{sec_partial_t_Holder_linear}

Similarly to the previous control and from Proposition \ref{prop_norm_hold_heat_kernel} we get
\begin{eqnarray}\label{ineq_partial_t_Holder_linear}
\| \partial_t \mathbf u \|_{L^\infty(C^\gamma)}
&\leq & 
\big ( 1+ C \nu^{-1+ \frac{\gamma}{2}}(1+\nu^{-\frac{1}{2}})\big ) \| \mathbf f \|_{L^\infty(C^\gamma)}
+  C \nu^{-1+ \frac{\gamma}{2}}(1+\nu^{-\frac{1}{2}})  [\mathbf D^2 \mathbf g ]_ \gamma 
\nonumber \\
&&+ \big ( 1+ C \nu^{-1+ \frac{\gamma}{2}}(1+\nu^{-\frac{1}{2}})\big )  \big (  \| \mathbf  b \cdot \mathbf  D \mathbf u \|_{L^\infty(C^\gamma)}
+    \| \mathbf  c \otimes \mathbf u \|_{L^\infty(C^\gamma)} \big )
\nonumber \\
&\leq & 
\big ( 1+ C \nu^{-1+ \frac{\gamma}{2}}(1+\nu^{-\frac{1}{2}})\big )  \| \mathbf f \|_{L^\infty(C^\gamma)}
+ C  [\mathbf D^2 \mathbf g ]_ \gamma 
\nonumber \\
&&
+ 
\Big ( 1+ C \nu^{-1+ \frac{\gamma}{2}}(1+\nu^{-\frac{1}{2}})\Big )  \Big (\| \mathbf  b \|_{L^\infty} \mathbf N_{\eqref{ineq_Du_Holder_linear}}(T, \|\mathbf b \|_{L^\infty (C^\gamma_b)}, \|\mathbf f \|_{L^\infty (C^\gamma_b)}, \|\mathbf g\|_{C^2_b}, \|\mathbf  c\|_{L^\infty(C^\gamma)} )
\nonumber \\
&&+ \| \mathbf  b \|_{L^\infty(C^\gamma)}
\mathbf N_{\eqref{ineq_grad_linear}} (T,\|\mathbf b \|_{L^\infty}, \|\mathbf f \|_{L^\infty}, \|\mathbf D \mathbf g \|_{L^\infty},\|\mathbf  c\|_{L^\infty} ) 
\nonumber \\
&&+ 
\| \mathbf  c \|_{L^\infty} \mathbf N_{\eqref{ineq_Holder_linear}}(T, \|\mathbf b \|_{L^\infty (C^\gamma_b)}, \|\mathbf f \|_{L^\infty}, \|\mathbf g\|_{C^1_b}, \|\mathbf  c\|_{L^\infty} )
\nonumber \\
&&+ \| \mathbf  c \|_{L^\infty(C^\gamma)}
\mathbf N_{\eqref{unif_u_Feynman_Kac}} (T,\|\mathbf b \|_{L^\infty}, \|\mathbf f \|_{L^\infty}, \| \mathbf g \|_{L^\infty},\|\mathbf  c\|_{L^\infty} ) \Big )
\nonumber \\
&=:&
\mathbf N_{\eqref{ineq_partial_t_Holder_linear}} (T, \|\mathbf b \|_{L^\infty (C^\gamma_b)}, \|\mathbf f \|_{L^\infty (C^\gamma_b)}, \|\mathbf g\|_{C^{2+\gamma}_b},\|\mathbf  c\|_{L^\infty(C^\gamma_b)})	.
\end{eqnarray}

Thus the stated Schauder estimates.

\mysection{Quasi-linear equations}
\label{sec_quasi}
	This current section is dedicated to the general case of quasi-linear equation when the first order term depends on the solution itself. 
	In the next part, i.e. Section \ref{sec_semi_NS}, a non-local case, that we call ``semi" Navier-Stokes equation, is treated  where the proof requires some extra computations.
\\
	
	For $0<r\leq d$, we consider the non-linear equation defined for a given $T>0$ (arbitrary big) by
	\begin{equation}
	\label{Quasi_equation_v1}
	\begin{cases}
	\partial_t\mathbf u(t,x)+  \mathbb A (\mathbf u)(t,x) \cdot  \mathbf D\mathbf u (t,x) 
	= \mathbf  D^2 \mathbf u(t,x):   a(t)+ \C(\mathbf u)(t,x)  + \mathbf f(t,x) 
	,\ t\in (0,T],\\
	\mathbf u(0,x)=\mathbf g(x),
	\end{cases}
	\end{equation}
	where we recall that $\mathbf D\mathbf u $ stands for the Jacobian matrix by blocks, ``$\cdot$" for the tensor contraction.
	%
	%
	In other words,
	\begin{equation}\label{def_Du_u}
	\mathbb A (\mathbf u) \cdot \mathbf D\mathbf u 
	= \big (\sum_{1 \leq j \leq d}(\mathbb A (\mathbf u))_j \partial_{x_j} (\mathbf u)_i\big )_{1 \leq i \leq r} .
	\end{equation}
The dimensions $d$ and $r$ can be different and can be arbitrary ``big".  
	The associated non-linear differential operator is defined as following,  for any $\mathbf \varphi \in C^\infty_0([0,T] \times \R^d, \R^r)$, by 
	\begin{equation}\label{def_Ln_t}
	\forall (t,x) \in [0,T] \times \R^d, \ \mathbf L \mathbf \varphi(t,x) := 
	\mathbb A (\varphi)(t,x) \cdot  \mathbf D \mathbf {\varphi}  (t,x) 
	- \mathbf  D^2 \varphi(t,x): a(t)+ \C(\mathbf \varphi)(t,x).
	\end{equation}
	
	\textbf{\large{Assumptions}}
	\begin{trivlist}
\item[\A{P${}_{\mathbb A}$}]
	There is a non-negative real function $\mathscr M_{\mathbb A}: \R_+ \rightarrow \R_+$ locally bounded such that, for all $\mathbf b \in C^\infty_0([0,T] \times \R^d, \R^r)$ and $\gamma \in (0,1]$: 
	\begin{eqnarray}\label{hypo_A}
		\|\mathbb A(\mathbf b)\|_{L^\infty} + 	\|\mathbb C(\mathbf b)\|_{L^\infty}&\leq &\mathscr M_{\mathbb A}(\|\mathbf b\|_{L^\infty}),
		\nonumber \\
		\|\mathbb A(\mathbf b)\|_{L^\infty(C_b^\gamma)} + \|\mathbb C(\mathbf b)\|_{L^\infty(C_b^\gamma)} &\leq & \mathscr M_{\mathbb A} ( \|\mathbf b\|_{L^\infty(C_b^\gamma)}),
	\end{eqnarray}
	for $\gamma=1$, we naturally suppose $
	\|\mathbf D \mathbb A(\mathbf b)\|_{L^\infty} +  \|\mathbf D \mathbb C(\mathbf b)\|_{L^\infty} \leq  \mathscr M_{\mathbb A} ( \|\mathbf D \mathbf b\|_{L^\infty})$.
	\item[\A{P${}_{\mathbb C}$}]
	There is a non-negative function $c \in L^\infty([0,T] ,\R_+)$ such that, for all $\mathbf b \in C^\infty_0([0,T] \times \R^d, \R^r)$, and  $t \in [0,T]$
	\begin{eqnarray}\label{hypo_A}
	\|\mathbb C(\mathbf b)(t,\cdot)\|_{L^\infty} &\leq &c(t)\|\mathbf b(t,\cdot)\|_{L^\infty},
	\nonumber \\
 \|\mathbb C(\mathbf b)(t,\cdot)\|_{L^\infty(C_b^\gamma)} &\leq &  c(t)  \|\mathbf b(t,\cdot)\|_{C_b^\gamma}.
	\end{eqnarray}
\item[\A{F}]
There are non-negative real functions $\tilde{\mathscr M}_{\mathbb A}, \tilde{\mathscr M}_{\mathbb C}: \R_+ \times \R_+ \rightarrow \R_+$ locally bounded such that, for any $\mathbf b_1, \mathbf b_2 \in C^\infty_0([0,T] \times \R^d, \R^r)$, 
\begin{equation}\label{hypo_A_func}
\|\mathbb A(\mathbf b_1)-\mathbb A(\mathbf b_2)\|_{L^\infty} \leq \|\mathbf b_1-\mathbf b_2\|_{L^\infty} \tilde {\mathscr M}_{\mathbb A}(\|\mathbf b_1\|_{L^\infty}, \|\mathbf b_2\|_{L^\infty}),
\end{equation}
and
\begin{equation}\label{hypo_C_func}
\|\mathbb C(\mathbf b_1)-\mathbb C(\mathbf b_2)\|_{L^\infty} \leq \|\mathbf b_1-\mathbf b_2\|_{L^\infty} \tilde {\mathscr M}_{\mathbb C}(\|\mathbf b_1\|_{L^\infty}, \|\mathbf b_2\|_{L^\infty}).
\end{equation}
\end{trivlist}
	Importantly, the operators $\mathbb A$ and $\mathbb C$ can be non-local, as it will be the case for ``semi" Navier-Stokes equation (which requires a particular analysis as it does not satisfy the above assumptions).
	\begin{remark}
We can suppose some dependency on the current time in the upper-bounds, even with some time singularities (\textit{a priori} only integrable ones).
But for the sake of clarity, we choose to avoid this case which should not imply substantial difficulties.
	\end{remark}
	For example, we can choose the operators:
	\begin{eqnarray}
		\mathbb A (\mathbf b)(t,x) &=& \mathbf b(t,x),  \text{ \textit(multidimensional Burgers like equation)},
		\nonumber \\
		\mathbb A (\mathbf b)(t,x) &=& |\mathbf b|^k(t,x) \mathbf b(t,x), \ k \in \R_+,  \text{ \textit(generalized multidimensional Burgers like equation)},
		\nonumber \\
		\mathbb A (\mathbf b) (t,x) &=& \int_{\R^d} \rho(t,x-y) |\mathbf b|^k(t,y) \mathbf b(t,y) dy, \ k \in \R_+,  \text{ for any } \rho \in L^\infty([0,T]; L^1(\R^d,\R)),
		\nonumber \\
		\mathbb A (\mathbf b) (t,x) &=& \exp \big ( |\mathbf b|(t,x) \big ) \mathbf b(t,x) .
	\end{eqnarray}

\begin{remark}
			The first example, the ``multidimensional	viscous Burgers' equation" is a particular case of this problem whose the Schauder estimates are already established in \cite{unte:17} but with high regularity for $f$ called therein the \textit{forcing term}.
	
\end{remark}
	
	
	
	\begin{THM}[Schauder estimates for quasi-linear equation]\label{Theo_Quasi}
		We suppose \A{E}, \A{P${_{\mathbb A}}$}, \A{P${_{\mathbb C}}$} and \A{F}.
		For  $\gamma \in (0,1)$ be given.
		For all $\mathbf f \in L^\infty([0,T];  C_b^{\gamma}(\R^d,\R^r))$ and $\mathbf g \in C^{2+\gamma}_b(\R^d,\R^r)$, 
		 there is  a unique strong solution $\mathbf u \in  L^\infty([0,T];C_b^{2+\gamma}(\R^{d},\R^r)) \cap C^1_b([0,T];C_b^{\gamma}(\R^{d},\R^r)) $ of \eqref{Quasi_equation_v1}. 

	\end{THM}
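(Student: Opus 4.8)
The plan is to realise \eqref{Quasi_equation_v1} as a fixed point of the solution operator of the linear problem \eqref{KOLMO} and to apply the Schaefer--Leray--Schauder Theorem \ref{theo_schaef}, the \textit{a priori} bound being produced by the sharp linear Schauder estimates of Theorem \ref{THEO_SCHAU}. I would work in the Banach space $E:=C([0,T];C_b^{\gamma}(\R^d,\R^r))$. Given $\mathbf v\in E$, assumption \A{P${}_{\mathbb A}$} ensures (the operators $\mathbb{A},\mathbb{C}$ being understood as suitably extended to $C_b^\gamma$-regular arguments) $\mathbb{A}(\mathbf v)\in L^\infty([0,T];C_b^\gamma(\R^d,\R^r\otimes\R^d))$ and $\mathbb{C}(\mathbf v)+\mathbf f\in L^\infty([0,T];C_b^\gamma(\R^d,\R^r))$, so Theorem \ref{THEO_SCHAU}, applied with $\mathbf b=\mathbb{A}(\mathbf v)$, $\mathbf c=\mathbf 0$, source $\mathbb{C}(\mathbf v)+\mathbf f$ and datum $\mathbf g$, provides a unique $\mathbf u\in L^\infty([0,T];C_b^{2+\gamma})\cap C_b^1([0,T];C_b^\gamma)$; I set $\mathscr H(\mathbf v):=\mathbf u\in E$. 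A fixed point of $\mathscr H$ is precisely a solution of \eqref{Quasi_equation_v1} in the claimed class, and any fixed point of $\mu\mathscr H$ with $\mu\in[0,1]$ solves the same equation with $\mathbf f,\mathbf g,\mathbb{C}(\mathbf u)$ replaced by $\mu\mathbf f,\mu\mathbf g,\mu\,\mathbb{C}(\mathbf u)$.

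For the \textit{a priori} bound, let $\mathbf u=\mu\mathscr H(\mathbf u)$. Treating $\mu\,\mathbb{C}(\mathbf u)+\mu\mathbf f$ as a source term and using the Feynman--Kac representation of Section \ref{sec_Feynman_Kac}, together with the sublinearity \A{P${}_{\mathbb C}$} and $\mu\le 1$, one gets $\|\mathbf u(t,\cdot)\|_{L^\infty}\le\|\mathbf g\|_{L^\infty}+T\|\mathbf f\|_{L^\infty}+\int_0^t c(s)\|\mathbf u(s,\cdot)\|_{L^\infty}\,ds$, hence by Gr\"onwall a bound $\|\mathbf u\|_{L^\infty(L^\infty)}\le M_0$ depending only on $T,\|\mathbf f\|_{L^\infty},\|\mathbf g\|_{L^\infty}$ and $c$, and \emph{not} on $\mathbb{A}(\mathbf u)$. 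Then \A{P${}_{\mathbb A}$} gives $\|\mathbb{A}(\mathbf u)\|_{L^\infty}+\|\mathbb{C}(\mathbf u)\|_{L^\infty}\le\mathscr M_{\mathbb A}(M_0)$; inserting this into the gradient estimate \eqref{ineq_grad_linear} bounds $\|\mathbf D\mathbf u\|_{L^\infty}$, after which the interpolation $[\mathbf u(s,\cdot)]_\gamma\le C\|\mathbf u(s,\cdot)\|_{L^\infty}^{1-\gamma}\|\mathbf D\mathbf u(s,\cdot)\|_{L^\infty}^{\gamma}$ bounds $\|\mathbf u\|_{L^\infty(C_b^\gamma)}$ and thus, again by \A{P${}_{\mathbb A}$}, $\|\mathbb{A}(\mathbf u)\|_{L^\infty(C_b^\gamma)}+\|\mathbb{C}(\mathbf u)\|_{L^\infty(C_b^\gamma)}$. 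Feeding these successively into \eqref{ineq_Hess_linear}, \eqref{ineq_Du_Holder_linear}, \eqref{ineq_Hess_Holder_linear}, \eqref{ineq_partial_t_unif_linear} and \eqref{ineq_partial_t_Holder_linear} yields $\|\mathbf u\|_{L^\infty(C_b^{2+\gamma})}+\|\partial_t\mathbf u\|_{L^\infty(C_b^\gamma)}\le M$ with $M$ depending only on $\nu,\|a\|_{L^\infty},T,\|\mathbf f\|_{L^\infty(C_b^\gamma)},\|\mathbf g\|_{C_b^{2+\gamma}},\mathscr M_{\mathbb A}$ and $c$ --- in particular not on $\mu$ nor $\mathbf u$ --- which is the constant demanded by Theorem \ref{theo_schaef}.

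It remains to check that $\mathscr H$ is continuous and compact, and then uniqueness. If $\mathbf v_n\to\mathbf v$ in $E$, the difference $\mathbf w_n:=\mathscr H(\mathbf v_n)-\mathscr H(\mathbf v)$ solves a linear equation of type \eqref{KOLMO} with coefficient $\mathbb{A}(\mathbf v_n)$, no zeroth-order term, zero datum and source $-(\mathbb{A}(\mathbf v_n)-\mathbb{A}(\mathbf v))\cdot\mathbf D\mathscr H(\mathbf v)+\mathbb{C}(\mathbf v_n)-\mathbb{C}(\mathbf v)$; by \A{F} this source tends to $0$ in $L^\infty([0,T];L^\infty)$, so Feynman--Kac gives $\|\mathbf w_n\|_{L^\infty(L^\infty)}\to 0$, and combining with the uniform bound $\|\mathbf w_n\|_{L^\infty(C_b^{2+\gamma})}\le C$ (from the linear estimates, the data being uniformly bounded in $n$) and the interpolation $\|\cdot\|_{C_b^\gamma}\le C\|\cdot\|_{L^\infty}^{2/(2+\gamma)}\|\cdot\|_{C_b^{2+\gamma}}^{\gamma/(2+\gamma)}$ one obtains $\|\mathbf w_n\|_E\to 0$. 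For compactness, the image of a bounded $B\subset E$ is, by the above, bounded in $L^\infty([0,T];C_b^{2+\gamma})$ and equicontinuous in time (through the $\partial_t$-bound), so its relative compactness in $E$ follows by Arzel\`a--Ascoli, the unbounded domain being dealt with by exhausting $\R^d$ with balls and a diagonal extraction, the sup-norm bounds passing to the limit. Theorem \ref{theo_schaef} then produces a fixed point, i.e. a solution of \eqref{Quasi_equation_v1} in $L^\infty([0,T];C_b^{2+\gamma})\cap C_b^1([0,T];C_b^\gamma)$. Uniqueness follows by the same device: if $\mathbf u_1,\mathbf u_2$ are two solutions, $\mathbf w:=\mathbf u_1-\mathbf u_2$ solves \eqref{KOLMO} with coefficient $\mathbb{A}(\mathbf u_1)$, zero datum and source $-(\mathbb{A}(\mathbf u_1)-\mathbb{A}(\mathbf u_2))\cdot\mathbf D\mathbf u_2+\mathbb{C}(\mathbf u_1)-\mathbb{C}(\mathbf u_2)$, so Feynman--Kac and \A{F} give $\|\mathbf w(t,\cdot)\|_{L^\infty}\le C\int_0^t\|\mathbf w(s,\cdot)\|_{L^\infty}\,ds$ with $C$ finite since $\mathbf u_1,\mathbf u_2\in L^\infty(C_b^{2+\gamma})$, whence $\mathbf w\equiv 0$ by Gr\"onwall.

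The main obstacle is not the fixed-point machinery itself but making the \textit{a priori} bootstrap close: the linear Schauder estimates must be invoked in exactly the right order so that, at each stage, the H\"older norms of $\mathbb{A}(\mathbf u)$ and $\mathbb{C}(\mathbf u)$ occurring in the right-hand sides are controlled through $\mathscr M_{\mathbb A}$ by quantities ($\|\mathbf u\|_{L^\infty}$, and then $\|\mathbf D\mathbf u\|_{L^\infty}$) that have already been bounded independently of those very H\"older norms; this is possible precisely because \eqref{unif_u_Feynman_Kac} is $\mathbf b$-free and \eqref{ineq_grad_linear} depends on $\mathbf b$ only through $\|\mathbf b\|_{L^\infty}$. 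The most delicate single step is the Hessian bound \eqref{ineq_Hess_linear}, where $\|\mathbf D^2\mathbf u\|_{L^\infty}$ reappears on the right-hand side (to the power $\gamma<1$, through the interpolation of $[\mathbf D\mathbf u]_\gamma$) and must be absorbed via the circular Lemma \ref{lemme_Taupe}.
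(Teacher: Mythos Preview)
Your overall strategy --- Schaefer's theorem, with the \textit{a priori} bound coming from the bootstrap \eqref{unif_u_Feynman_Kac}$\to$\eqref{ineq_grad_linear}$\to$interpolation$\to$remaining Schauder estimates --- is exactly the paper's, and your observation that the scheme closes because \eqref{unif_u_Feynman_Kac} is $\mathbf b$-free and \eqref{ineq_grad_linear} sees only $\|\mathbf b\|_{L^\infty}$ is precisely the point of Section \ref{sec_apriori_Burgers}. Your uniqueness argument via Feynman--Kac on the difference is a clean variant of the paper's Duhamel/integration-by-parts proof in Section \ref{sec_uniqu_Burger}.

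The gap is in the compactness step. You assert that a bounded family in $L^\infty([0,T];C_b^{2+\gamma}(\R^d))$, equicontinuous in time, is relatively compact in $E=C([0,T];C_b^\gamma(\R^d))$ by ``Arzel\`a--Ascoli, the unbounded domain being dealt with by exhausting $\R^d$ with balls and a diagonal extraction''. But the embedding $C_b^{2+\gamma}(\R^d)\hookrightarrow C_b^\gamma(\R^d)$ is \emph{not} compact on the whole space; diagonal extraction over balls yields a subsequence that converges only locally uniformly, and without any decay or tightness you cannot upgrade this to convergence in the global $C_b^\gamma$ norm (think of translates of a bump). So $\mathscr H$ is not shown to be compact on your $E$, and Theorem \ref{theo_schaef} does not apply as stated.

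The paper confronts exactly this obstruction in Section \ref{sec_compact}: rather than proving compactness in $C_b^\gamma(\R^d)$, it truncates via the cut-offs $\theta_{y,R}$, passes to a weak formulation on the compact ball $B_d(0,R)$, applies Schaefer there (where Arzel\`a--Ascoli is legitimate) to obtain localized fixed points $\mathbf u_{x,R}$, and finally recovers a global strong solution by testing against a Dirac sequence and letting $m\to\infty$ (Sections \ref{sec_weak}--\ref{sec_weak_strong}). This localize--then--reconstruct detour is the substantive content you are missing; alternatively, one can impose spatial decay (as the paper does for the ``semi'' Navier--Stokes equation via the $\|\cdot\|_\beta$ norms in Section \ref{sec_compact_NS}), but no such decay is available under the hypotheses of Theorem \ref{Theo_Quasi}.
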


The idea of the proof is to combine the Schaefer/Leray Schauder theorem, see Theorem \ref{theo_schaef},
with our Schauder estimates in Theorem \ref{THEO_SCHAU}, which allows us to bypass some usual difficulties related with quasi-linear equations, see e.g. Remark and warning page 507 in \cite{evan:98} where we see the difficulty to build a sequence which at the limit converges towards the quasi-linear equation (no Cauchy sequence without smallness assumption on the initial condition).
	A fixed point approach is not new for non-linear equations, we can refer for instance to the steady-states solutions of incompressible Navier-Stokes in \cite{lema:16} p530.
	
	\section{Proof of Theorem \ref{Theo_Quasi}}%
	\label{sec_proof_Quasi}		
%
		
		For a given $\mathbb B \in L^\infty([0,T]; C^1_b(\R^d,\R^r))$, we approximate this quasi-linear equation by a multidimensional version of the linear parabolic equation previously studied. 
		We 
		 first consider the following parabolic equation, for any $(t,x) \in [0,T] \times \R^d$
		\begin{equation}\label{KOLMO_suite}
		\begin{cases}
		\partial_t \tilde{\mathbf u}(\mathbf b)(t,x)+ \mathbb B(t,x) \cdot \mathbf  D \tilde{\mathbf u}(\mathbf b)  (t,x) 
		+ \mathbb D(t,x)
		= \mathbf  D^2 \mathbf u (\mathbf b)(t,x):   a(t)+ \mathbf f(t,x) 
		,
		\\
		\tilde {\mathbf u}(\mathbf b)(0,x)=\mathbf g(x),
		\end{cases}
		\end{equation}
		where the drift term and the zero order term write:
		\begin{equation}\label{def_mathbb_B}
			\mathbb B (t,x) = \mathbb A(\mathbf b)(t,x), \ \ \ \mathbb D(t,x):=\mathbb C(\mathbf b)(t,x).
		\end{equation}
		
		We define the associated differential operator,  for any $\varphi \in C^\infty_0([0,T] \times \R^d, \R^r)$, by 
		\begin{equation}\label{def_Ln_t_b}
		\forall (t,x) \in [0,T] \times \R^d, \ \mathbf {\tilde L}_t(\mathbf b) \varphi(t,x) := 
			\mathbb B (t,x) \cdot   \mathbf D \varphi  (t,x) 
		- \mathbf  D^2 \varphi(t,x): a(t)+\mathbb D(t,x).
		\end{equation}
		It is well-known, see \cite{frie:64}, that there are Green operator and semi-group, respectively denoted by $\mathbf G(\mathbf b)$ and $\mathbf P(\mathbf b)$, such that the solution of \eqref{KOLMO_suite} writes,
		for any $(t,x) \in [0,T] \times \R^d$,
		\begin{equation}\label{ident_tilde_u}
		\tilde {\mathbf u}(\mathbf b)(t,x) = \mathbf G(\mathbf b)\mathbf f(t,x) +\mathbf P(\mathbf b)\mathbf g(t,x)
		=: \mathscr{H}_L(\mathbf f,\mathbf g)[\mathbf b](t,x) .
		\end{equation}
	
		The aim is to prove that we can consider $\mathbf b=\tilde {\mathbf u}$ by use of Schaeffer fixed-point argument.

		\subsection{\textit{A priori} boundedness of the fixed-point}
		\label{sec_apriori_Burgers}
		
		In this section, we perform, the crucial \textit{a priori} controls of a fixed-point.
		Namely, we suppose that $\mathbf u= \mathscr{H}_L(\mathbf f,\mathbf g)[\mathbf u]$; the case $\mathbf u= \mu \mathscr{H}_L(\mathbf f,\mathbf g)[\mathbf u]$, $\mu \in [0,1)$, is even more direct. 
		In particular, the boundedness of such a $\mathbf u$
		is quiet direct thanks to Schauder estimates in Theorem \ref{THEO_SCHAU}, replacing in the upper bounds $\mathbf b$ by $\mathbb A (\mathbf u)$, $\mathbf c$ 
		by $\mathbb{C}(\mathbf u)$, and 
		 from assumptions \A{E}, \A{P${}_{\mathbb A}$}, \A{P${}_{\mathbb C}$}:
			\begin{eqnarray*} 
	\|\mathbf D^{2} \mathbf u\|_{L^\infty(C^\gamma)}
&\leq&  
\mathbf N_{\eqref{ineq_Hess_Holder_linear}}(T, \mathscr M_{\mathbb A}(\|\mathbf u \|_{L^\infty (C^1_b)}), \|\mathbf f \|_{L^\infty (C^\gamma_b)}, \|\mathbf g\|_{C^{2+\gamma}_b}, \|c\|_{L^\infty} ),
\nonumber \\
\|\mathbf D^{2} \mathbf u \|_{L^\infty} 
&\leq&   
\mathbf N_{\eqref{ineq_Hess_linear}}(T, \mathscr M_{\mathbb A}( \|\mathbf u \|_{L^\infty (C^\gamma_b)}), \|\mathbf f \|_{L^\infty (C^\gamma_b)}, \|\mathbf g\|_{C^2_b}, \|c\|_{L^\infty} )	 ,
\nonumber \\
\|\mathbf D\mathbf u\|_{L^\infty}
&\leq & 
 \mathbf N_{\eqref{ineq_grad_linear}} (T,\mathscr M_{\mathbb A}(\|\mathbf u \|_{L^\infty}), \|\mathbf f \|_{L^\infty}, \| \mathbf g \|_{C^1_b},  \|c\|_{L^\infty})
\nonumber \\
\| \mathbf u \|_{L^\infty} &\leq&   \mathbf N_{\eqref{unif_u_Feynman_Kac}}(T, \|\mathbf f\|_{L^\infty},\|\mathbf g\|_{L^\infty}, \|c\|_{L^\infty}) , 
\\ 
\|\partial_t \mathbf u\|_{L^\infty}
&\leq&  
\mathbf N_{\eqref{ineq_partial_t_unif_linear}} (T, \mathscr M_{\mathbb A}(\|\mathbf u \|_{L^\infty (C^\gamma_b)}), \|\mathbf f \|_{L^\infty (C^\gamma_b)}, \|\mathbf g\|_{C^2_b}, \|c\|_{L^\infty} )		,
\nonumber \\
\|\partial_t \mathbf u\|_{L^\infty(C^\gamma)}
&\leq&
\mathbf N_{\eqref{ineq_partial_t_Holder_linear}} (T, \mathscr M_{\mathbb A}(\|\mathbf u \|_{L^\infty (C^\gamma_b)}), \|\mathbf f \|_{L^\infty (C^\gamma_b)}, \|\mathbf g\|_{C^2_b}, \|c\|_{L^\infty} )	,
\nonumber 
	\end{eqnarray*}
	Importantly, from the fourth inequality, the boundedness of the norm $\| \mathbf D \mathbf u\|_{L^\infty}$ is already done:
	\begin{eqnarray}\label{ineq_grad_quasi}
\| \mathbf D \mathbf u \|_{L^\infty} &\leq&  
 \mathbf N_{\eqref{ineq_grad_linear}} \Big (T,\mathscr M_{\mathbb A}\big (\mathbf N_{\eqref{unif_u_Feynman_Kac}}(T, \|\mathbf f\|_{L^\infty},\|\mathbf g\|_{L^\infty}, \|c\|_{L^\infty}) \big ), \|\mathbf f \|_{L^\infty}, \| \mathbf g \|_{C^1_b},  \|c\|_{L^\infty} \Big )
\nonumber \\
&=:& \mathbf N_{\eqref{ineq_grad_quasi}} (T, \|\mathbf f \|_{L^\infty }, \|\mathbf g\|_{C^1_b},  \|c\|_{L^\infty})
.
	\end{eqnarray}
	We also deduce from interpolation inequality that
		\begin{eqnarray}\label{ineq_Holder_quasi}
	\|  \mathbf u \|_{L^\infty(C^\gamma)} &\leq&  2^{1-\gamma} \|  \mathbf u \|_{L^\infty}^{1-\gamma}\| \mathbf D \mathbf u \|_{L^\infty}^\gamma
\nonumber \\
&\leq& 2^{1-\gamma} \mathbf N_{\eqref{unif_u_Feynman_Kac}}(t, \|\mathbf f\|_{L^\infty},\|\mathbf g\|_{L^\infty}, \|c\|_{L^\infty}) ^{1-\gamma}
	 \mathbf N_{\eqref{ineq_grad_quasi}} (T, \|\mathbf f \|_{L^\infty }, \|\mathbf g\|_{C^1_b},  \|c\|_{L^\infty})^{\gamma}
	\nonumber \\
	&=:& \mathbf N_{\eqref{ineq_Holder_quasi}} (T, \|\mathbf f \|_{L^\infty }, \|\mathbf g\|_{C^1_b},  \|c\|_{L^\infty})
	.
	\end{eqnarray}
	Thanks to this inequality, we can directly derive the other Schauder estimates:
	\begin{eqnarray}\label{ineq_D2_Holder_Quasi}
		\|\mathbf D^{2} \mathbf u\|_{L^\infty(C^\gamma)}
		&\leq&  
		\mathbf N_{\eqref{ineq_Hess_Holder_linear}}(T, \mathscr M_{\mathbb A}(\mathbf N_{\eqref{ineq_Holder_quasi}} (T, \|\mathbf f \|_{L^\infty }, \|\mathbf g\|_{C^1_b},  \|c\|_{L^\infty})), \|\mathbf f \|_{L^\infty (C^\gamma_b)}, \|\mathbf g\|_{C^{2+\gamma}_b}, \|c\|_{L^\infty})
		\nonumber \\
		&=:& \mathbf N_{\eqref{ineq_D2_Holder_Quasi}}(T,  \|\mathbf f \|_{L^\infty (C^\gamma_b)}, \|\mathbf g\|_{C^{2+\gamma}_b}, \|c\|_{L^\infty}) ,
		 \\
		 \label{ineq_D2_unif_Quasi}
		\|\mathbf D^{2} \mathbf u \|_{L^\infty} 
		&\leq&   
		\mathbf N_{\eqref{ineq_Hess_linear}}(T, \mathscr M_{\mathbb A}(\mathbf N_{\eqref{ineq_Holder_quasi}} (T, \|\mathbf f \|_{L^\infty }, \|\mathbf g\|_{C^1_b},  \|c\|_{L^\infty})), \|\mathbf f \|_{L^\infty (C^\gamma_b)}, \|\mathbf g\|_{C^2_b}, \|c\|_{L^\infty})  
			\nonumber \\
		&=:& \mathbf N_{\eqref{ineq_D2_unif_Quasi}}(T,  \|\mathbf f \|_{L^\infty (C^\gamma_b)}, \|\mathbf g\|_{C^{2+\gamma}_b}, \|c\|_{L^\infty}) ,
		\\
			 \label{ineq_partial_t_unif_Quasi}
		\|\partial_t \mathbf u\|_{L^\infty}
		&\leq&  
		\mathbf N_{\eqref{ineq_partial_t_unif_linear}} (T, \mathscr M_{\mathbb A}(\mathbf N_{\eqref{ineq_Holder_quasi}} (T, \|\mathbf f \|_{L^\infty }, \|\mathbf g\|_{C^1_b},  \|c\|_{L^\infty})), \|\mathbf f \|_{L^\infty (C^\gamma_b)}, \|\mathbf g\|_{C^2_b}, \|c\|_{L^\infty})	
		\nonumber \\
			&=:& \mathbf N_{\eqref{ineq_partial_t_unif_Quasi}}(T,  \|\mathbf f \|_{L^\infty (C^\gamma_b)}, \|\mathbf g\|_{C^{\gamma}_b}, \|c\|_{L^\infty}) ,
		\\
		\label{ineq_partial_t_Holder_Quasi}
	\|\partial_t \mathbf u\|_{L^\infty(C^\gamma)}
		&\leq&
		\mathbf N_{\eqref{ineq_partial_t_Holder_linear}} (T, \mathscr M_{\mathbb A}(\mathbf N_{\eqref{ineq_Holder_quasi}} (T, \|\mathbf f \|_{L^\infty }, \|\mathbf g\|_{C^1_b},  \|c\|_{L^\infty})), \|\mathbf f \|_{L^\infty (C^\gamma_b)}, \|\mathbf g\|_{C^{2+\gamma}_b}, \|c\|_{L^\infty})
		\nonumber \\
			&=:& \mathbf N_{\eqref{ineq_partial_t_Holder_Quasi}}(T,  \|\mathbf f \|_{L^\infty (C^\gamma_b)}, \|\mathbf g\|_{C^{2+\gamma}_b}, \|c\|_{L^\infty}) .
	\end{eqnarray}
The \textit{a priori} Schauder estimates are then established.
	
	\subsection{Compactness of the parabolic operator}
	\label{sec_compact}
	
	To show that the operator is compact, we aim to obtain the sequential characteristic:
	for each bounded sequence $(\mathbf b_m)_{m \in \N}$ in $L^\infty([0,T]; C_b ^1(\R^d,\R^r))$ there is a subsequence of $\mathscr H_L(\mathbf f, \mathbf g)[\mathbf b_m]$ which converges in  $L^\infty([0,T]; C_b ^1(\R^d,\R^ r))$, i.e. $\mathscr H_L(\mathbf f, \mathbf g)[\cdot ]$ is relatively compact.

	\subsubsection{Truncation procedure}
	%
	
	The usual way to deduce convergence of a subsequence is to use Arzela-Ascoli theorem, however the starting space has to be compact.
	Hence, to apply this theorem we need to use a smooth cut-off $\theta_{y,R} \in \mathcal D$ in a ball $B_d(y,R) \subset \R^d$, the ball with $R$ as radius in $\R^d$ and $y\in \R^d$ as center, defined by
	\begin{equation}\label{def_theta}
	\theta_{y,R}(x)= \theta_y(\frac{x}{R}),
	\end{equation}
	where $\theta_y : \R^d \to [0,1]^d$ is a $C^\infty$ function such that 
	\begin{equation*}
	\theta_y(x)=\begin{cases}
	x, \ \text{ if } |x-y| < 1,	\\
	0, \ \text{ if } |x-y| > 2.
	\end{cases}
	\end{equation*}
	For $\tilde {\mathbf u}$ solution of the linear equation \eqref{KOLMO_suite}, we consider now the function defined, for any $(t,x ) \in [0,T] \times \R^d$, by
	\begin{equation}\label{def_u_R}
	\tilde {\mathbf u}_{y,R}(t,x):= \tilde {\mathbf u} (t,\theta_{y,R}(x)),
	\end{equation}
	with the important particular case
	\begin{equation}\label{u_Rxx}
	\tilde {\mathbf u}_{x,R}(t,x)= \tilde {\mathbf u} (t,x).
	\end{equation}
	But to directly get the suitable convergence when $R \to + \infty$, we need some integrability properties of $\tilde {\mathbf u}$,
	to do first let us consider the weak formulation of the parabolic equation \eqref{KOLMO} which allows to consider a truncated solution as in \eqref{def_u_R}. We pass to the strong solution in Section \ref{sec_weak_strong}.

	\subsubsection{Weak solution}
	\label{sec_weak}
	
	Let us take a smooth function $\varphi_R$ supported on $B_d(0,R)$.
	We can write a weak formulation of the solution $\mathbf u= \mathscr H_L (\mathbf f, \mathbf g)[\mathbf b]$, i.e.
	for any $(t,x) \in [0,T]\times \R^d$:
	\begin{eqnarray*}
		\int_{\R^d} \varphi_R(x-y)\big [ \partial_t \tilde {\mathbf u}(t,y) 
	+\mathbb B(t,y) \cdot \mathbf D \tilde {\mathbf u}(t,y)+\mathbb D(t,y)- \mathbf  D^2 \tilde {\mathbf u}(t,y):   a(t) \big ] dy
		&=& \int_{\R^d} \varphi_R(x-y) \mathbf f(t,y) dy ,
\nonumber \\
		\int_{\R^d} \varphi_R(x-y) \tilde {\mathbf u}(0,y) dy & =&	\int_{\R^d} \varphi_R(x-y) \mathbf g(y) dy ,
	\end{eqnarray*}
recalling that $\mathbb B$ and $\mathbb D$ are defined in \eqref{def_mathbb_B}.

	Thanks to the definition of the support of $\varphi_R$, the first line of the above equation equivalently writes
	\begin{eqnarray}
	\label{eq_weak_Kolmo}
	&&\int_{\R^d} \varphi_R(x-y)\big [ \partial_t \tilde {\mathbf u}_{x,R}(t,y) 
	+\mathbb B_{x,R}(t,y) \cdot \mathbf D \tilde {\mathbf u}_{x,R}(t,y)+\mathbb D_{x,R}(t,y)- \mathbf  D^2 \tilde {\mathbf u}_{x,R}(t,y):   a(t) \big ] dy
	\nonumber \\
	&=& \int_{\R^d} \varphi_R(x-y) \mathbf f(t,y) dy ,
	\end{eqnarray}
	with $\mathbb B_{x,R}(t,y)=\mathbb B(t,\theta_{x,R}(y))=\mathbb A(\mathbf b)(t,\theta_{x,R}(y))$ and $\mathbb D_{x,R}(t,y)=\mathbb D(t,\theta_{x,R}(y))=\mathbb C(\mathbf b)(t,\theta_{x,R}(y))$.
\\

	Hence, 
	for each bounded sequence $(\mathbf b_m)_{m \in \N}$ in $L^\infty([0,T]; C_b ^1(\R^d,\R^r))$, 	from Schauder estimates stated in Theorem \ref{THEO_SCHAU} we know that  $(t,y)\mapsto \mathscr H_L(\mathbf f, \mathbf g)[\mathbf b_m](t,\theta_{x,R}(y))=:\mathscr H_L(\mathbf f, \mathbf g)[\mathbf b_m]_{x,R}(t,y)$ is also bounded in  $L^\infty([0,T]; C_b ^1(B_d(0,R),\R^r))$; and, 
	thanks to the Arzelà–Ascoli theorem, 
	there is a subsequence of $\mathscr H_L(\mathbf f, \mathbf g)[\mathbf b_m]_{x,R}$  which converges in the Banach space $L^\infty([0,T]; C_b ^1(B_d(0,R),\R^ r))$.
	In other words, the operator $\mathscr H_L(\mathbf f, \mathbf g)[\mathbf b_m]_{x,R}$
 is relatively compact in $L^\infty([0,T]; C_b ^1(B_d(0,R),\R^ r))$.

	\subsubsection{Weak solution to strong equation}
	\label{sec_weak_Burgers}
	
	At this stage, we can apply Leray-Schauder theorem, i.e. Theorem \ref{theo_schaef} to the operator $\mathscr H_L(\mathbf f, \mathbf g)[\cdot]_{x,R}$ which is continuous and  compact in $L^\infty([0,T]; C_b ^1(B_d(0,R),\R^ r))$,  and any fixed-point is \textit{a priori} bounded, see Section \ref{sec_apriori_Burgers}; hence there is a smooth function $\mathbf u_{x,R} \in L^\infty([0,T]; C_b ^1(B_d(x,R),\R^ r))$, $x \in \R^d$, such that 
	\begin{equation*}
	\forall (t,y) \in [0,T] \times \R^d, \  \	\mathbf u_{x,R}(t,y) = \mathscr H_L(\mathbf f, \mathbf g)[\mathbf u_{x,R}]_{x,R}(t,y),
	\end{equation*}
	which is, in particular, a weak solution of quasi-linear equation writing for any $(t,x) \in [0,T] \times \R^d$:
		\begin{equation}
		\begin{cases}
	\label{eq_weak_Burgers}
	\int_{\R^d} \varphi_R(x-y)\big [ \partial_t \mathbf u_{x,R}(t,y) 
+\mathbb A(\mathbf u_{x,R})_{x,R}(t,y) \cdot \mathbf D \mathbf u_{x,R}(t,y)+\mathbb C(\mathbf u_{x,R})_{x,R}(t,y)- \mathbf  D^2 \mathbf u_{x,R}(t,y):   a(t) \big ] dy
\\
= \int_{\R^d} \varphi_R(x-y) \mathbf f(t,y) dy ,
\\
\int_{\R^d} \varphi_R(x-y) \mathbf u_{x,R}(0,y) dy =	\int_{\R^d} \varphi_R(x-y) \mathbf g(y) dy .
\end{cases}
\end{equation}
	Moreover, from the \textit{a priori} Schauder estimates performed in Section \ref{sec_apriori_Burgers}, we also get that $	\mathbf u_{x,R} \in L^\infty([0,T]; C_b ^{2+\gamma}(B_d(0,R),\R^ r)) \cap C^1_b([0,T]; C_b ^\gamma(B_d(0,R),\R^ r)) $.
	
	\subsubsection{From weak to strong solution}
	\label{sec_weak_strong}
	
	Thanks to the regularity of the above solution $\mathbf u_R$ of the weak quasi-linear equation \eqref{eq_weak_Burgers} we can expect to solve this equation point-wisely (in a strong form).
	To do so, let us introduce a smooth Dirac sequence $(\psi_m)_{m \geq 0}$ with compact support $B_d(0,R)$ such that $\int \psi_m=1$; we can choose for instance
	 the Landau example which is, in dimension $1$, defined by $\psi_m(x)= \frac{(2m+1)!}{2^{2m+1}(m!)^2}(1-\frac {x^2}{R^2})^m \mathds{1}_{x \in [-R,R]}$.
	\\
	
	The idea now is to show that we can pass to the limit when $m \to + \infty$ in the weak formulation \eqref{eq_weak_Burgers}. This is in fact possible thanks to the already known regularity of $\mathbf u_{x,R}$ stated in Theorem \ref{Theo_Quasi}.
	\\
	
Let us define for any $(t,x) \in [0,T] \times \R^d$:
	\begin{equation}\label{def_ux}
		\mathbf u(t,x) := \mathbf u_{x,R}(t,x),
	\end{equation}
	in the spirit of \eqref{u_Rxx} but for the weak solution of the quasi-linear equation defined only in a compact set. This function $	\mathbf u(t,x)$ is a good candidate to be the strong solution of the quasi-linear equation \eqref{Quasi_equation_v1}.
		\\
	
	Now, we show that replacing $\varphi_R$ by $\psi_m$ in \eqref{eq_weak_Burgers} and letting $m$ going to $+ \infty$ yields point-wisely to the strong formulation of the quasi-linear equation. Namely, we aim to prove that
	\begin{eqnarray*}
		&&\lim_{m \to + \infty}	\int_{\R^d} \psi_m (x-y)\big [ \partial_t \mathbf u_{x,R}(t,y) 
		+\mathbb A(\mathbf u_{x,R})_{x,R}(t,y) \cdot \mathbf D \mathbf u_{x,R}(t,y)+\mathbb C(\mathbf u_{x,R})_{x,R}(t,y)
		\\
	&&- \mathbf  D^2 \mathbf u_{x,R}(t,y):   a(t)	- \mathbf f(t,y) \big ] dy 
		\nonumber \\
		&=&\partial_t \mathbf u(t,x) 
		+\mathbb A(\mathbf u)(t,x) \cdot \mathbf D \mathbf u(t,x)+\mathbb C(\mathbf u)(t,x)- \mathbf  D^2 \mathbf u(t,x):   a(t) 
		- \mathbf f(t,x) .
	\end{eqnarray*}
	
	We write for any $x \in \R^d$, thanks to the well-known regularisation by convolution controls 
	and Schauder estimates the following results.
	\\
	
	$\bullet$ The time derivative part:
	\begin{eqnarray}\label{conv_weak_strong_Burgers_partial_t}
\Big |	\int_{\R^d} \psi_m(x-y) \partial_t \mathbf u_{x,R}(t,y) dy  - \partial_t \mathbf u(t,x) \Big |
	&=&
	\Big |\int_{\R^d} \psi_m(x-y)\big [  \partial_t \mathbf u_{x,R}(t,y) - \partial_t \mathbf u_{x,R}(t,x)  \big ] dy \Big |
	\nonumber \\
	 &\leq & C m^{-\gamma} \| \partial_t \mathbf u_{x,R}\|_{L^\infty(C^\gamma)}
	 	\nonumber \\
	 &\leq & C m^{-\gamma} 
	 	\mathbf N_{\eqref{ineq_partial_t_Holder_Quasi}}( T , \|\mathbf f\|_{L^\infty(C_b^\gamma)}, \|\mathbf g\|_{L^\infty(C_b^{2+\gamma})}  ) 
	 \nonumber \\
	 &\underset{m \to + \infty} \longrightarrow& 0.
	\end{eqnarray}
	
		$\bullet$ The first non-linear part, which is dealt thanks to assumptions \A{P${}_{\mathbb A}$} and \A{F}:
	\begin{eqnarray}\label{conv_weak_strong_Burgers_uDu}
	&& \Big |	\int_{\R^d} \psi_m(x-y) \mathbb A(\mathbf u_{x,R})_{x,R}(t,y) \cdot \mathbf D \mathbf u_{x,R}(t,y) dy  -\mathbb A(\mathbf u)(t,x) \cdot \mathbf D \mathbf u(t,x) \Big |
	\nonumber \\
&\leq & C m^{-\gamma} \big [ \mathscr M_{\mathbb A}(\| \mathbf u\|_{L^\infty(C^\gamma)})\| \mathbf D \mathbf u_{x,R}\|_{L^\infty}
+ \mathscr M_{\mathbb A}(\| \mathbf u\|_{L^\infty})\| \mathbf D \mathbf u_{x,R}\|_{L^\infty(C^\gamma)} \big ]
	\nonumber \\
	&\leq & C m^{-\gamma} 
	\mathbf N( T , \|\mathbf f\|_{L^\infty(C_b^\gamma)}, \|\mathbf g\|_{L^\infty(C_b^{2+\gamma})}  ) 
	\nonumber \\
	&\underset{m \to + \infty} \longrightarrow& 0,
	\end{eqnarray}
	where $\mathbf N( T , \|\mathbf f\|_{L^\infty(C_b^\gamma)}, \|\mathbf g\|_{L^\infty(C_b^{2+\gamma})}  ) 
	$ is a constant depending on $d$, $r$, $\gamma$, $\nu$, $T$, $ \|\mathbf f\|_{L^\infty(C_b^\gamma)}$, and  $\|\mathbf g\|_{L^\infty(C_b^{2+\gamma})}$ given by interpolation and by Schauder estimates.
	\\
	
	$\bullet$ The second non-linear part, thanks to assumption \A{F}:
	\begin{eqnarray}\label{conv_weak_strong_Burgers_D}
	\Big |	\int_{\R^d} \psi_m(x-y) \mathbb C(\mathbf u_{x,R})_{x,R}(t,y) dy  -\mathbb C(\mathbf u)(t,x) \Big |
	&\leq & C m^{-\gamma} \mathscr M_{\mathbb C}(\| \mathbf u\|_{L^\infty(C^\gamma)})
	\nonumber \\
	&\leq & C m^{-\gamma} \mathscr M_{\mathbb C}\Big (
		\mathbf N_{\eqref{ineq_Holder_quasi}} (T, \|\mathbf f \|_{L^\infty }, \|\mathbf g\|_{C^1_b},  \|c\|_{L^\infty}) \Big )
	\nonumber \\
	&\underset{m \to + \infty} \longrightarrow& 0.
	\end{eqnarray}

	$\bullet$ The second order part:
	\begin{eqnarray}\label{conv_weak_strong_Burgers_nu_Delta_u}
&&	\Big |	\int_{\R^d} \psi_m(x-y) \mathbf  D^2 \mathbf u_{x,R}(t,y):   a(t)  dy  - \mathbf  D^2 \mathbf u(t,x):   a(t)  \Big |
\nonumber \\
	&\leq & C \|a\|_{L^\infty}
	m^{-\gamma} \| \mathbf D^2 \mathbf u_{x,R}\|_{L^\infty(C^\gamma)}
	\nonumber \\
	&\leq & C  \|a\|_{L^\infty} m^{-\gamma} 
	\mathbf N_{\eqref{ineq_D2_Holder_Quasi}}(T , \|\mathbf f\|_{L^\infty(C_b^\gamma)}, \|\mathbf g\|_{L^\infty(C_b^{2+\gamma})}  ) 
	\nonumber \\
	&\underset{m \to + \infty} \longrightarrow& 0.
	\end{eqnarray}
	The convergence of the other contributions in \eqref{eq_weak_Burgers} are even more direct. Hence, from \eqref{conv_weak_strong_Burgers_partial_t}-
	\eqref{conv_weak_strong_Burgers_nu_Delta_u}, we obtain that $\mathbf u$ defined in \eqref{def_ux} is a strong solution to quasi-linear equation \eqref{Quasi_equation_v1} which satisfies Schauder estimates.
	
	\subsection{Uniqueness of the solution to quasi-linear equation}
	\label{sec_uniqu_Burger}
	
We establish uniqueness of the solution of the quasi-linear equation \eqref{Quasi_equation_v1} for any finite $T>0$. 
\\

Let us consider two smooth solutions $\mathbf u_1$ and $\mathbf u_2$ of \eqref{Quasi_equation_v1} lying in $L^\infty([0,T]; C_b^{2+\gamma}(\R^r\R^r))$ such that there is a positive mapping $\mathbf N_{\mathbf f, \mathbf g}(T)$ depending on $\|\mathbf f \|_{L^\infty(C^\gamma_b)}$ and $\|\mathbf g \|_{C^{2+\gamma}_b}$ (also on $r$, $d$ and $\gamma$) satisfying:
\begin{equation}\label{Uniqueness_Buergers_Schauder_u1_u2}
	\|\mathbf u_1 \|_{L^\infty(C^{2+\gamma}_b)}  + \|\mathbf u_2 \|_{L^\infty(C^{2+\gamma}_b)} \leq \mathbf  N_{\mathbf f ,\mathbf g }(T).
\end{equation}

We also define $\mathbf U= \mathbf u_1-\mathbf u_2$, which is solution, for any $(t,x) \in (0,T] \times \R^d$, of 
	 	\begin{equation}
\label{Burgers_equation_U}
\begin{cases}
\partial_t\mathbf U(t,x)+[\mathbb A (\mathbf u_1)(t,x)-\mathbb A (\mathbf u_2)(t,x)]   \cdot \mathbf D \mathbf u_1(t,x)
+\mathbb A [\mathbf u_2](t,x)   \cdot \mathbf D \mathbf U(t,x)
+[\mathbb C (\mathbf u_1)(t,x)-\mathbb C (\mathbf u_2)(t,x)]   
\\= \mathbf D^2  \mathbf U(t,x) : a(t)
,
\\
\mathbf U(0,x)=\mathbf 0.\\
\end{cases}
\end{equation}
	From Duhamel's principle, we readily derive that for any $(t,x) \in [0,T] \times \R^d$:
	\begin{equation}\label{Duahmel_U_burgers}
		\mathbf U(t,x) = \underbrace{\tilde P \mathbf U(0,x)}_{=\mathbf 0} - \tilde G  [\mathbb A (\mathbf u_1)-\mathbb A (\mathbf u_2)]  \cdot \mathbf D \mathbf u_1(t,x)
		- \tilde G \mathbb A (\mathbf u_2)  \cdot \mathbf D \mathbf U(t,x)
		- \tilde G [\mathbb C (\mathbf u_1)-\mathbb C (\mathbf u_2)] (t,x)  .
	\end{equation}
	We directly get:
	\begin{eqnarray}\label{ineq_Duhamel_U_Burgers_1}
		\| \tilde G  [\mathbb A (\mathbf u_1)-\mathbb A (\mathbf u_2)] \cdot \mathbf D \mathbf u_1(t,\cdot)\|_{L^\infty}
		&\leq& 
		C \|\mathbf D \mathbf u_1\|_{L^\infty} \int_0^t \| \mathbf U(s,\cdot)\|_{L^\infty}  \tilde {\mathscr M}_{\mathbb A}(\|\mathbf u_1\|_{L^\infty},\|\mathbf u_2\|_{L^\infty} )
	\nonumber \\
		&\leq& C \mathbf  N_{\mathbf f ,\mathbf g }(T)  \int_0^t \| \mathbf U(s,\cdot)\|_{L^\infty}  ds ,
	\end{eqnarray}
	 from assumption \A{F}.

	Also we write by integration by parts:
	\begin{eqnarray*}
	\|\tilde G \mathbb A (\mathbf u_2)  \cdot \mathbf D \mathbf U(t,\cdot)\|_{L^\infty}
&= & \sup_{t \in [0,T], \, x \in \R^d} \Big | 
\int_0^t \int_{\R^d}\mathbf D \cdot \big (\tilde p(s,t, x,y)  \mathbb A (\mathbf u_2) (s,y) \big ) \mathbf U (s,y) dy \, ds
\Big |
\nonumber \\
&\leq & C \int_0^t \big (\|\mathbf D \mathbb A (\mathbf u_2)\|_{L^\infty}+(t-s)^{-\frac{1}{2}} \|\mathbb A (\mathbf u_2)\|_{L^\infty}\big ) \|\mathbf U (s,\cdot)\|_{L^\infty} ds
.
	\end{eqnarray*}
	and by assumptions \A{P${}_{\mathbb A}$}, for $\gamma=1$, 
	\begin{eqnarray}\label{ineq_Duhamel_U_Burgers_2}
	\|\tilde G \mathbb A (\mathbf u_2)  \cdot \mathbf D \mathbf U(t,\cdot)\|_{L^\infty}
	&\leq & C \int_0^t  \big (\mathscr M_{\mathbb A}(\|\mathbf D \mathbf u_2\|_{L^\infty})+(t-s)^{-\frac{1}{2}} \mathscr M_{\mathbb A}(\|\mathbf u_2\|_{L^\infty})\big ) \|\mathbf U (s,\cdot)\|_{L^\infty} ds
	\nonumber \\
	&\leq &
	C \mathbf N_{\mathbf f ,\mathbf g }(T) \int_0^t  \|\mathbf U (s,\cdot)\|_{L^\infty} ds
	.
	\end{eqnarray}	
	We finally obtain by assumption \A{F}:
		\begin{eqnarray}\label{ineq_Duhamel_U_Burgers_3}
\| \tilde G  [\mathbb C (\mathbf u_1)(t,\cdot)-\mathbb C (\mathbf u_2)(t,\cdot)]\|_{L^\infty}
	&\leq& 
	 \int_0^t \| \mathbf U(s,\cdot)\|_{L^\infty}  \tilde {\mathscr M}_{\mathbb C}(\|\mathbf u_1\|_{L^\infty},\|\mathbf u_2\|_{L^\infty} ) ds
	\nonumber \\
	&\leq&
	\tilde {\mathscr M}_{\mathbb C}\Big ( \mathbf N_{\mathbf f ,\mathbf g }(T) 
	,	 \mathbf N_{\mathbf f ,\mathbf g }(T) 
	 \Big )
	\int_0^t \| \mathbf U(s,\cdot)\|_{L^\infty}  .
	\nonumber \\
	\end{eqnarray}
	Combining \eqref{Duahmel_U_burgers} with \eqref{ineq_Duhamel_U_Burgers_1}, \eqref{ineq_Duhamel_U_Burgers_2} and \eqref{ineq_Duhamel_U_Burgers_3} readily yields:
		\begin{equation}\label{ineq_Duhamel_U_Burgers_T}
	\| \mathbf U(t,\cdot)\|_{L^\infty}
\leq 
C \mathbf N_{\mathbf f ,\mathbf g }(T) \int_0^t  \|\mathbf U (s,\cdot)\|_{L^\infty} ds
.
	\end{equation}
	We deduce from Gr\"onwall's lemma that $	\| \mathbf U\|_{L^\infty}=0$, which,  in particular, means that $\mathbf u_1=\mathbf u_2$, the smooth solution of quasi-linear equation \eqref{Quasi_equation_v1} is then unique.
	
\mysection{A ``semi" Navier-Stokes equation}
\label{sec_semi_NS}

In this part, we set the dimension\footnote{This dimension assumption is crucial for the harmonic analysis required in the Poisson equation implied by the Leray-Hopf projector.} $d=r=3$.
We consider the following semi-linear equation, that we call ``semi" Navier-Stokes equation:
		\begin{equation}
	\label{Semi_Navier_Stokes_equation_v1}
	\begin{cases}
	\partial_t\mathbf u(t,x)+\mathbb P[\mathbf u](t,x)   \cdot \mathbf D \mathbf u(t,x)
	= \nu \Delta\mathbf u(t,x)+ \mathbf f(t,x) 
	,\ t\in (0,T],\\
	\mathbf u(0,x)=\mathbf g(x),
	\end{cases}
	\end{equation}
	where $\nu>0$ is called \textit{viscosity} in fluid mechanic context, and  $\P$ is the Leray-Hopf projector defined in \eqref{def_P}.
	
	We point out that in the non-linear contribution, the Leray projector applies on $\mathbf u$ and not on $\mathbf u\cdot \mathbf D \mathbf u$ required for the usual Navier Stokes equation, see \cite{lema:16}.
	If $\mathbf u$ is divergence free, in such a case the Cauchy problem \eqref{Semi_Navier_Stokes_equation_v1} would match with the incompressible Navier Stokes equation, but this hypothesis has no reason to be generally true...
	\\
	
For the sake of simplicity, in this section, we do not specify the exact norm  in the upper-bounds of the type $\mathbf N_{(\cdot)}(\cdot)$, as the upper-bounds require several extra norms (in Lebesgue space and some queuing controls). We write instead upper-bounds as $\mathbf N_{(\cdot)}(T,\mathbf f, \mathbf g)$, where the index still refers to the associated identity.
This is all the more relevant here, as we do not have exactly a usual parabolic bootstrap. Indeed in the Schauder estimates below, some $L^2$ norms are in the r.h.s. which is due to the required energy control.
	
		\begin{THM}[Schauder estimates for semi Navier-Stokes equation]\label{Theo_Semi}
		For  $\gamma \in (0,1)$ be given.
		For all source functions $\mathbf f \in L^\infty([0,T];  C_b^{\gamma}(\R^3,\R^3))\cap L^2([0,T]; L^2(\R^3,\R^3))$ and $\mathbf g \in C^{2+\gamma}_b(\R^3,\R^3)\cap L^2(\R^3,\R^3)$ satisfying
		\begin{equation*}
		\sup_{|\alpha|\leq 1}	\|\mathbf D^{\alpha} \mathbf  f\|_{L^\infty,\beta } + 	\sup_{|\alpha'|\leq 2}	\|\mathbf D^{\alpha '} \mathbf  g\|_{\beta } <+ \infty,
		\end{equation*}
		there is  a unique strong solution $\mathbf u \in  L^\infty([0,T];C_b^{2+\gamma}(\R^{3},\R^3)) \cap C^1_b([0,T];C_b^{\gamma}(\R^{d},\R^3)) $ of \eqref{Semi_Navier_Stokes_equation_v1} satisfying the energy estimates
			\begin{eqnarray}\label{ineq_semi_NS_Leray_theo}
		\sup_{t \in[0,T]}\|\mathbf u (t,\cdot)\|_{L^2}^2
		&\leq &  \sqrt 2 \|\mathbf g\|_{L^2}^2+ 2\|\mathbf f\|_{L^2 L^2} ^2,
		\nonumber \\
		\int_0^T  \|\mathbf D  \mathbf u(s,\cdot) \|_{L^2}^2 ds 
	&\leq& \nu^{-1}\Big ( \|\mathbf g\|_{L^2}^2  
+
\int_0^t  \|\mathbf f(s,\cdot)\|_{L^2}  e^{\frac{s}{2} }\big (\|\mathbf g\|_{L^2}^2 +
\frac{1}{2}\int_0^s \| \mathbf f(\tilde s,\cdot) \|_{L^2} ^2 d\tilde s \big )  ds \Big )
,
		\end{eqnarray}
		the Schauder estimates, 
	and	also the queueing controls
		\begin{equation}\label{ineq_Du_beta_NS_theo}
				\|\partial_t \mathbf u(t,\cdot)\|_{\beta}+\|\mathbf D^2 \mathbf u(t,\cdot) \|_{\beta}+
				\|\mathbf D \mathbf u(t,\cdot) \|_{\beta}+	\| \mathbf u (t, \cdot) \|_{\beta} 
				\leq 
				  \mathbf N_{\eqref{ineq_partial_t_u_beta_semi_NS_final}, \eqref{ineq_D2u_beta_semi_NS_final}, \eqref{ineq_Du_beta_semi_NS_final},\eqref{ineq_u_beta_semi_NS_final} } (t, \mathbf f, \mathbf g).
		\end{equation}
%
		
	\end{THM}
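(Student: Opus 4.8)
The plan is to run the Schaefer/Leray--Schauder scheme of Theorem~\ref{Theo_Quasi}, but on a Banach space $E$ that records, besides the $L^\infty([0,T];C_b^{1}(\R^3,\R^3))$ information, the decay norms $\sup_{|\alpha|\le1}\|\mathbf D^{\alpha}\,\cdot\,\|_{L^\infty,\beta}$ and the energy information $L^\infty([0,T];L^2)\cap L^2([0,T];L^2)$. This extra data is forced on us because the Leray--Hopf projector $\P$ from \eqref{def_P} is a Calder\'on--Zygmund convolution operator whose kernel decays only like $|x|^{-3}$: it is bounded on $C^\gamma$ and on $L^p$ for $p\in(1,\infty)$ but \emph{not} on $L^\infty$, so $\P[\mathbf b]$ is an admissible drift for the linear theory only once enough decay of $\mathbf b$ is known. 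First I would fix $\mathbf b\in E$, put $\mathbb B:=\P[\mathbf b]$, and verify that $\mathbb B\in L^\infty([0,T];C_b^{\gamma}(\R^3,\R^3))$: its homogeneous H\"older seminorm is bounded by that of $\mathbf b$ by $C^\gamma$-boundedness of the second order Riesz transforms, while its $L^\infty$ norm is controlled using $\|\mathbf b\|_{L^\infty,\beta}$ after splitting the singular integral into a near part (handled by the $C^\gamma$ modulus, in the spirit of \eqref{ineq_cancel}) and a far part (handled by $\int_{|y|\ge1}|y|^{-3}(1+|y|)^{-\beta}\,dy<\infty$, valid for $\beta>0$). Then Theorem~\ref{THEO_SCHAU}, with $a(t)\equiv\nu\,\mathrm{Id}$ and zero order term $\mathbf c=\mathbf 0$, defines $\mathscr H_L(\mathbf f,\mathbf g)[\mathbf b]:=\tilde{\mathbf u}$, the unique solution of the linear problem \eqref{KOLMO} with drift $\mathbb B$, and the fixed-point map to study is $\mathbf b\mapsto\mathscr H_L(\mathbf f,\mathbf g)[\mathbf b]$.

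The core is the \textit{a priori} bound: assume $\mathbf u=\mu\,\mathscr H_L(\mathbf f,\mathbf g)[\mathbf u]$, $\mu\in[0,1]$, with $\mathbf u$ in the stated class. The energy estimates \eqref{ineq_semi_NS_Leray_theo} come from Leray's computation: multiplying \eqref{Semi_Navier_Stokes_equation_v1} by $\mathbf u$ and integrating, the transport term equals $\tfrac12\,\P[\mathbf u]\cdot\mathbf D|\mathbf u|^2$ and integrates to zero since $\mathbf D\cdot\P[\mathbf u]=0$, leaving $\tfrac{d}{dt}\|\mathbf u\|_{L^2}^2+2\nu\|\mathbf D\mathbf u\|_{L^2}^2\le2\|\mathbf f\|_{L^2}\|\mathbf u\|_{L^2}$, so Gr\"onwall and integration in time give the two displayed bounds. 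For the uniform norm, the Feynman--Kac control \eqref{unif_u_Feynman_Kac} is \emph{independent of the drift} and here $\mathbf c=\mathbf 0$, hence $\|\mathbf u(t,\cdot)\|_{L^\infty}\le\|\mathbf g\|_{L^\infty}+t\|\mathbf f\|_{L^\infty}$ with no circularity. Next come the queueing controls \eqref{ineq_Du_beta_NS_theo}: from the Duhamel formula with the heat kernel $h_\nu$ I would bound $\|\mathbf u(t,\cdot)\|_\beta$ and $\|\mathbf D^{\alpha}\mathbf u(t,\cdot)\|_\beta$ for $|\alpha|\le2$, using the weighted heat-kernel bound \eqref{ineq_h_nu_beta} on the data terms and the $L^p$ bound \eqref{ineq_h_nu_Lp} on the intermediate terms, and estimating the nonlinearity $\P[\mathbf u]\cdot\mathbf D\mathbf u$ by combining the $L^\infty$ and $L^2$ bounds with boundedness of $\P$ on $L^p$ and on weighted H\"older spaces; the resulting coupled, partly super-linear system is closed by Gr\"onwall and the circular-argument Lemma~\ref{lemme_Taupe}. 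This yields $\|\P[\mathbf u]\|_{L^\infty(C_b^\gamma)}\le\mathbf N(T,\mathbf f,\mathbf g)$, after which the gradient, Hessian and time-derivative H\"older estimates follow from \eqref{ineq_grad_linear}, \eqref{ineq_Hess_linear}, \eqref{ineq_Hess_Holder_linear}, \eqref{ineq_partial_t_unif_linear} and \eqref{ineq_partial_t_Holder_linear} applied with $\mathbf b$ replaced by $\P[\mathbf u]$. Altogether $\mathbf u$ is bounded in $E$ by a quantity $\mathbf N(T,\mathbf f,\mathbf g)$ depending only on $T$, $\nu$, $\gamma$ and the listed norms of $\mathbf f$ and $\mathbf g$.

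With the \textit{a priori} bound secured, I would obtain compactness and continuity of $\mathscr H_L(\mathbf f,\mathbf g)[\cdot]$ on $E$ as in Section~\ref{sec_compact}: truncate by the cut-off $\theta_{x,R}$, apply the Arzel\`a--Ascoli theorem on the balls $B_3(0,R)$ using the $C_b^{2+\gamma}$ Schauder bounds, and upgrade to global compactness via the decay norms, whose tails are uniformly small; continuity follows from continuity of $\mathbf b\mapsto\P[\mathbf b]$ on the relevant spaces together with linearity of the associated Green and semigroup operators. Schaefer's Theorem~\ref{theo_schaef} then produces a fixed point $\mathbf u=\mathscr H_L(\mathbf f,\mathbf g)[\mathbf u]$; passing from the weak (truncated) formulation to the strong one through a Dirac sequence as in Section~\ref{sec_weak_strong} gives the strong solution, which inherits the Schauder and queueing estimates. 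For uniqueness I would set $\mathbf U=\mathbf u_1-\mathbf u_2$, observe that the difference of the nonlinear terms equals $\P[\mathbf U]\cdot\mathbf D\mathbf u_1+\P[\mathbf u_2]\cdot\mathbf D\mathbf U$, and run an $L^2$ energy estimate: the $\P[\mathbf u_2]$ term vanishes by the divergence-free property, while the $\P[\mathbf U]$ term is bounded by $\|\mathbf D\mathbf u_1\|_{L^\infty}\|\P[\mathbf U]\|_{L^2}\|\mathbf U\|_{L^2}\le C\|\mathbf D\mathbf u_1\|_{L^\infty}\|\mathbf U\|_{L^2}^2$ by $L^2$-boundedness of $\P$, so Gr\"onwall forces $\mathbf U\equiv0$.

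The step I expect to be the main obstacle is closing the \textit{a priori} estimates for the weighted decay norms while simultaneously guaranteeing that $\P[\mathbf u]$ is a genuine bounded H\"older drift for the linear theory: the Leray kernel decays exactly like $|x|^{-d}$, so the decay of $\mathbf u$ sits right at the borderline for $\P[\mathbf u]\in L^\infty$, and one must handle the $L^2$ energy bound, the $L^p$ heat-kernel bounds and the weighted $L^\infty$ bounds together --- possibly conceding a small loss in the decay rate $\beta$ under $\P$ --- instead of working within a single norm as in the quasi-linear case.
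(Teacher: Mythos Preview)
Your proposal is correct and follows the same overall strategy as the paper: Schaefer fixed point, Leray energy identity for the $L^2$ bounds, Feynman--Kac for the drift-independent $L^\infty$ bound, and then a Duhamel/H\"older--$L^p$ splitting (Calder\'on--Zygmund for $\P$ on $L^p$, interpolated against the energy and uniform bounds) to close the gradient and weighted estimates before recovering the full Schauder controls. The key mechanism you flag at the end --- that $\P$ is unbounded on $L^\infty$ and one must go through $L^p$ first, only obtaining a pointwise bound on $\P[\mathbf u]$ \emph{after} the $\|\cdot\|_\beta$ control --- is exactly the paper's route.

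Two organizational differences worth noting. First, for compactness you import the truncation $\theta_{x,R}$ and the weak-to-strong passage from Section~\ref{sec_compact}--\ref{sec_weak_strong}; the paper's Section~\ref{sec_compact_NS} is simpler: because the Banach space $E$ already carries the $\|\cdot\|_\beta$ decay, Arzel\`a--Ascoli on balls upgrades directly to global uniform convergence by the tail estimate, with no weak formulation or Dirac sequence needed. Your path works but is heavier than necessary here. Second, in the a priori chain the paper's gradient bound (and the subsequent ones) close by a \emph{linear} Gr\"onwall in $\|\mathbf D\mathbf u(t,\cdot)\|_{L^\infty}$, so Lemma~\ref{lemme_Taupe} is not actually invoked in the semi-NS section; its mention in your sketch is harmless but superfluous. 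Your $L^2$ uniqueness argument is a clean addition --- the paper states uniqueness but does not spell out a proof in the semi-NS case.
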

	
	\begin{remark}
If $\P u $ is solution of an incompressible Navier-Stokes, then we can prove that this solution is unique and is ``physically reasonable", see \cite{lema:16};
	and thanks to the queuing controls, we can readily derive the smoothness of the projection of $\mathbf u$ on the divergence free function space.
Unfortunately, to prove this assumption seems to be highly not trivial, or even false in most cases.
	\end{remark}
	
	\begin{remark}
	There is no dependency on $\nu$ in the $L^2$ norm in \eqref{ineq_semi_NS_Leray_theo} and in the uniform control
	$		\|\mathbf u\|_{L^\infty} \leq T \|\mathbf f\|_{L^\infty}+\|\mathbf g\|_{L^\infty}$, see \eqref{ineq_Linfty_semi_NS} below, then we can expect some regular behaviour in a  turbulent regime phenomenon, when $\nu \to 0$. 
	\end{remark}

	We need to specifically study this equation as the operator $\mathbb P$ does not satisfy \textit{a priori} assumptions \A{P${}_{\mathbb A}$} nor \A{F}.
	\\
	
	To use Schaeffer theorem, we need to consider continuous compact operator in Banach space with bounded fixed points.
	The proof of continuity of the consider operator is very similar to the Navier-Stokes operator,  see  Section 16.5 dans \cite{lema:16}. 
	In order to obtain compactness, we use Arzelà–Ascoli theorem, but we need to consider a starting compact space as in the previous quasi-linear case. Thanks to the queuing controls of the norm $\|\cdot \|_\beta$, we can growth the compact size to infinity in a easier way than in Section \ref{sec_compact}.
	
	\subsection{Compactness}
	\label{sec_compact_NS}
	
	The Duhamel formulation of the  solution \eqref{Semi_Navier_Stokes_equation_v1} is
	\begin{equation}
	\mathbf u(t,x) = \tilde G \mathbf f (t,x) + \tilde P \mathbf g (t,x) + \tilde G \big(\P [\mathbf u] \cdot \mathbf D \mathbf u \big )(t,x).
	\end{equation}
	Hence, the operator to consider is for any $\psi \in C^\infty_0(\R^3,\R^3)$:
	\begin{equation}
	\mathscr H \psi  (t,x) : = \tilde G \mathbf f (t,x) + \tilde P \mathbf g (t,x) + \tilde G \big (\P [\psi   ] \cdot \mathbf D \psi  \big )(t,x).
	\end{equation}
	To establish compactness of this operator, we have to show that for any bounded sequence of functions $(\psi_m)_{m \in \N}$ of the Banach space $E$ there is a subsequence of $\mathscr H \psi_m$ converging in $E$, 
	i.e. $\mathscr H$ is relatively compact.
	\\
	
	Let us choose as Banach space $E$, the space of all functions $\varphi$ lying in $L^\infty([0,T]; C ^{2+\gamma}(\R^3,\R^3)) \cap C^{1}([0,T]; C ^{\gamma}(\R^3,\R^3))$  such that
	\begin{equation}
	\|\varphi\|_{L^\infty , \beta }+	\|\mathbf D  \varphi\|_{L^\infty, \beta } <+ \infty.
	\end{equation}
	By the controls stated in Theorem \ref{Theo_Semi} and computations performed further, we see that $\mathscr H \psi_m$ is uniformly continuous.
	Therefore, by Arzelà–Ascoli theorem there is a subsequence $(\mathscr H \psi_{m_k})_{k \geq 1}$ uniformly converging in all compacts of $[0,T] \times \R^3$ toward a limit $\mathscr H \psi$ lying in $E$.
	
	In particular, for any $R>0$, $\mathscr H \psi_{m_k}$ uniformly converges on $[0,T] \times B_3(0,R)$ towards $\mathscr H \psi$. Furthermore, for any $t \in  [0,T]$
	\begin{eqnarray*}
		&&	{\lim\sup}_{k \to + \infty} \|\mathscr H \psi_{m_k}(t,\cdot) -\mathscr H \psi(t,\cdot) \|_{L^\infty}
		\nonumber \\
		&= &
		{\lim\sup}_{k \to + \infty} \sup_{|x| >R} |\mathscr H \psi_{m_k}(t,x) -\mathscr H \psi(t,x) |
		\nonumber \\
		&\leq &		{\lim\sup}_{k \to + \infty} \sup_{|x| >R} (1+|x|)^{-\beta} \big ( \|\mathscr H \psi_{m_k}(t,\cdot)\|_\beta + \|\mathscr H \psi(t,\cdot) \|_\beta \big )
		\nonumber \\
		&\leq & \frac{C_\beta}{R}\sup_{m \in \N}\|\psi_m\|_{E} \underset{R \to + \infty}{\longrightarrow}0,
	\end{eqnarray*}
	where $\|\psi_m\|_{E}$ depends on the associated norms to $E$.
	In other words,  $(\mathscr H \psi_{m_k})_{k \geq 1}$  uniformly converges on the whole space $[0,T] \times \R^3$ towards $\mathscr H \psi$. 

	\subsubsection{Energy estimates}
	
	Considering $\mathbf u$ a solution to the ``semi" Navier-Stokes equation \eqref{Semi_Navier_Stokes_equation_v1}, and taking the scalar product with $\mathbf u$ of the solution yields:
	\begin{equation*}
	\partial_t \mathbf u \cdot \mathbf u
	+ \mathbb P[\mathbf u] \cdot \mathbf D \mathbf u \cdot \mathbf u
	= \nu  \mathbf \Delta \mathbf u \cdot \mathbf u + \mathbf f \cdot \mathbf u,
	\end{equation*}
	this is equivalent to
	\begin{equation*}
	\partial_t |\mathbf u |^2
	+ \mathbb P[\mathbf u] \cdot \mathbf D |\mathbf u |^2
	= \nu  \mathbf \Delta \mathbf u \cdot \mathbf u  + \mathbf f \cdot \mathbf u.
	\end{equation*}
	We can then integrate in time and space:
	\begin{equation*}
		\|\mathbf u(t,\cdot) \|_{L^2}^2-\|\mathbf g\|_{L^2}^2
		+ \int_0^t \int_{\R^3}\mathbb P[\mathbf u] \cdot \mathbf D |\mathbf u |^2 (s,y) dy \, ds
		\nonumber \\
		= \int_0^t \int_{\R^3} \nu  \mathbf \Delta \mathbf u \cdot \mathbf u (s,y) dy \, ds 
		+ \int_0^t \int_{\R^3}  \mathbf f \cdot \mathbf u(s,y) dy \, ds.
	\end{equation*}
	By integration by parts, we derive that:
	\begin{equation*}
		\|\mathbf u(t,\cdot) \|_{L^2}^2-\|\mathbf g\|_{L^2}^2
		- \int_0^t \int_{\R^3}(\mathbf D \cdot \mathbb P[\mathbf u] ) |\mathbf u |^2 (s,y) dy \, ds
		\nonumber \\
		= -\int_0^t \int_{\R^3} \nu  |\mathbf D  \mathbf u |^2(s,y) dy \, ds 
		+ \int_0^t \int_{\R^3}  \mathbf f \cdot \mathbf u(s,y) dy \, ds.
	\end{equation*}
	From $\mathbf D \cdot \mathbb P[\mathbf u]=0$, we derive the following crucial formula
	\begin{equation}\label{eq_NS_Leray}
	\|\mathbf u (t,\cdot)\|_{L^2}^2+\int_0^t \int_{\R^3} \nu  |\mathbf D  \mathbf u |^2(s,y) dy \, ds 
	=\|\mathbf g\|_{L^2}^2 + \int_0^t \int_{\R^3}  \mathbf f \cdot \mathbf u(s,y) dy \, ds.
	\end{equation}
Next, by the Cauchy-Schwarz inequality
\begin{equation*}
\| \mathbf u (t,\cdot)\|_{L^2}^2
+\int_0^t \int_{\R^3} \nu  |\mathbf D \mathbf u |^2(s,y) dy \, ds 
\leq \|\mathbf g\|_{L^2}^2 +
\int_0^t \| \mathbf  f(s,\cdot) \|_{L^2} 
\| \mathbf u(s,\cdot) \|_{L^2}  ds,
\end{equation*}
and by Young's inequality 
\begin{equation*}
\| \mathbf u (t,\cdot)\|_{L^2}^2
\leq \|\mathbf g\|_{L^2}^2 +
\frac{1}{2}\int_0^t \| \mathbf  f(s,\cdot) \|_{L^2} ^2 ds 
+ \frac{1}{2}\int_0^t  \| \mathbf  u(s,\cdot) \|_{L^2}^2  ds.
\end{equation*}
We deduce from  Gr\"onwall's lemma:
\begin{equation*}
\| \mathbf u (t,\cdot)\|_{L^2}^2
\leq  e^{\frac{t}{2} }\Big (\|\mathbf g\|_{L^2}^2 +
\frac{1}{2}\int_0^t \| \mathbf f(s,\cdot) \|_{L^2} ^2 ds \Big ),
\end{equation*}
and so
\begin{eqnarray*}
	\int_0^t  \| \mathbf D \mathbf    u(s,\cdot) \|_{L^2}^2 ds 
	&\leq& \nu^{-1}\Big ( \|\mathbf g\|_{L^2}^2  
	+
	\int_0^t  \|\mathbf f(s,\cdot)\|_{L^2}  e^{\frac{s}{2} }\big (\|\mathbf g\|_{L^2}^2 +
	\frac{1}{2}\int_0^s \| \mathbf f(\tilde s,\cdot) \|_{L^2} ^2 d\tilde s \big )  ds \Big )
	.
\end{eqnarray*}
	
	\subsubsection{$L^\infty$ controls}
	\label{sec_Lp_semi_NS}
	
	We directly derive, as previously, from Feynman-Kac representation, see Section \ref{sec_Feynman_Kac}, because 
	the corresponding \textit{drift} $\mathbb P[\mathbf u] \in L^\infty([0,T];C_b^1(\R^3,\R^3))$, see \eqref{ineq_Du_beta_semi_NS_almost_final} further, which is smooth enough to get a probabilistic representation and the uniform control of the type \ref{unif_u_Feynman_Kac}; namely:
	\begin{equation}\label{ineq_Linfty_semi_NS}
		\|\mathbf u\|_{L^\infty} \leq T \|\mathbf f\|_{L^\infty}+\|\mathbf g\|_{L^\infty}=: \mathbf N_{\eqref{ineq_Linfty_semi_NS}}(T, \|\mathbf f\|_{L^\infty},\|\mathbf g\|_{L^\infty}).
	\end{equation} 
	
%


\subsubsection{A first control of the gradient}
	\label{sec_grad_semi_NS}
	
	From Duhamel formula, we get for any $t \in [0,T]$:
	\begin{equation*}
		\|\mathbf D \mathbf u(t,\cdot) \|_{L^\infty}\leq 
	C \nu^{-\frac 12}	t^{\frac{1}{2}} \|\mathbf f\|_{L^\infty}+\|\mathbf D  \mathbf g \|_{L^\infty} +\sup_{x \in \R^3} \Big |\int_0^t \int_{\R^3} \mathbf D \tilde p (s,t,x,y) \mathbb P[\mathbf u]\cdot \mathbf D \mathbf u(s,y) dy \, ds \Big |.
	\end{equation*}
	By H\"older inequality, we have for any $1 \leq p,q \leq + \infty$:
		\begin{equation}\label{ineq_Du_infty_semi_NS1_prelim}
	\|\mathbf D \mathbf u(t,\cdot) \|_{L^\infty}\leq 
C \nu^{-\frac 12}	t^{\frac{1}{2}} \|\mathbf f\|_{L^\infty}+\|\mathbf D  \mathbf g \|_{L^\infty} +\sup_{x \in \R^3} \int_0^t \|\mathbf D \tilde p (s,t,x,\cdot)\|_{L^q} \| \mathbb P[\mathbf u]\cdot \mathbf D \mathbf u(s,\cdot) \|_{L^p} ds .
	\end{equation}
By similar computation as \eqref{ineq_h_nu_Lp}, from \eqref{ineq_absorb}, we write
	\begin{eqnarray*}
	\|\mathbf D \tilde p (s,t,x,\cdot)\|_{L^q}
	&=&	
	\Big (\int_{\R^3} |\mathbf D \tilde p (s,t,x,y)|^q dy \Big )^{\frac{1}{q}}
	\nonumber \\
	&\leq & C [\nu (t-s)]^{-\frac{1}{2}} \Big ( \int_{\R^3} |\bar p (s,t,x,\cdot)|^q dy\Big )^{\frac{1}{q}}
		\nonumber \\
		&=&  C q^{-\frac{3}{2q}} [\nu (t-s)]^{-\frac{1}{2}+\frac{\frac{-3q}{2}+\frac{3}{2}}{q}}
		\nonumber \\
		&=&C q^{-\frac{3}{2q}}  [\nu (t-s)]^{-2+\frac{3}{2q}},
	\end{eqnarray*}
which is an integrable singularity, if -2+ $\frac{3}{2q}>-1$.
In other words, we suppose 
\begin{equation}\label{condi_Holder_inq_Du}
	q<\frac{3}{2}, \, p>\frac{\frac{3}{2}}{\frac{3}{2}-1}=3. 
\end{equation}
Next, it is clear that
\begin{equation*}
	\| \mathbb P[\mathbf u](s,\cdot)\cdot \mathbf D \mathbf u(s,\cdot) \|_{L^p}
	\leq  
	\| \mathbf D \mathbf u(s,\cdot) \|_{L^\infty}
	\| \mathbb P[\mathbf u](s,\cdot)\|_{L^p}.
\end{equation*}
To control the $L^p$ norm of the Leray projector, we need a Calderòn-Zygmund inequality, see e.g. Theorem 9.9 in \cite{gilb:trud:83}.
\begin{lemma}\label{lemme_ineq_Leray_operator_Lp}
	For any $1<p<+ \infty$,  $\varphi \in L^p(\R^3, \R^3)  $, there is a constant $\mathscr C_ p=\mathscr C_ p(p)>0$ such that
	\begin{equation}\label{ineq_Leray_projector}
	\|\mathbb P \varphi \|_{L^p}
	\leq  \mathscr C_ p (d+1) \|\varphi\|_{L^p}.
	\end{equation}
\end{lemma}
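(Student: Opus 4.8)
The plan is to reduce the estimate to the classical $L^p$-boundedness of the second-order Riesz transforms, i.e. of the operator $D^2(-\Delta)^{-1}$, which is exactly the Calder\'on--Zygmund estimate for the Newtonian potential recorded as Theorem 9.9 in \cite{gilb:trud:83}. First I would write $\mathbb{P}\varphi$ component-wise: from the definition \eqref{def_P}, for $1\le i\le 3$,
\[
(\mathbb{P}\varphi)_i = \varphi_i - \partial_{x_i}(-\Delta)^{-1}\sum_{j=1}^{3}\partial_{x_j}\varphi_j = \varphi_i - \sum_{j=1}^{3}\partial_{x_i}\partial_{x_j}(-\Delta)^{-1}\varphi_j .
\]
For $\varphi\in C_0^\infty(\R^3,\R^3)$ this identity is unambiguous, and the operators $\partial_{x_i}\partial_{x_j}(-\Delta)^{-1}$ are singular integrals with classical Calder\'on--Zygmund kernels (homogeneous of degree $-d$, with the correct cancellation, plus a bounded multiple of the identity). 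Hence, by \cite[Thm. 9.9]{gilb:trud:83}, collecting the $d$ components $i$ and enlarging the constant if necessary, there is $\mathscr{C}_p=\mathscr{C}_p(p,d)\ge 1$ such that, for every scalar $h\in L^p(\R^3,\R)$ and $1<p<\infty$,
\[
\Big\| \big(\partial_{x_i}\partial_{x_j}(-\Delta)^{-1}h\big)_{1\le i\le 3} \Big\|_{L^p} \le \mathscr{C}_p\,\|h\|_{L^p}.
\]

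Applying this with $h=\varphi_j$ and denoting $w_j:=\big(\partial_{x_i}\partial_{x_j}(-\Delta)^{-1}\varphi_j\big)_{1\le i\le 3}$, the pointwise inequality $|\varphi_j(x)|\le |\varphi(x)|$ gives $\|\,|w_j|\,\|_{L^p}\le \mathscr{C}_p\|\varphi\|_{L^p}$. Combining the pointwise bound $|\mathbb{P}\varphi|\le |\varphi|+\sum_{j=1}^{3}|w_j|$ with the triangle inequality in $L^p$ yields
\[
\|\mathbb{P}\varphi\|_{L^p} \le \|\varphi\|_{L^p} + \sum_{j=1}^{3}\mathscr{C}_p\|\varphi\|_{L^p} = (1+d\,\mathscr{C}_p)\|\varphi\|_{L^p} \le \mathscr{C}_p(d+1)\|\varphi\|_{L^p},
\]
using $\mathscr{C}_p\ge 1$ and $d=3$: the identity term contributes the ``$1$'' and the $d$ divergence terms the ``$d$'', which is the origin of the factor $d+1$. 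For general $\varphi\in L^p(\R^3,\R^3)$ the estimate then follows by approximating with $C_0^\infty$ functions and passing to the limit, which is legitimate because the bound above is uniform in the approximation.

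There is no genuine obstacle here; the only point requiring care is that $(-\Delta)^{-1}$ by itself is \emph{not} bounded on $L^p$, so one must never manipulate it in isolation but always keep it inside the composite second-order operator $D^2(-\Delta)^{-1}$, which \emph{is} a Calder\'on--Zygmund operator and hence bounded on every $L^p$ with $1<p<\infty$. This is precisely why we invoke the a priori estimate for the Newtonian potential of \cite{gilb:trud:83} rather than trying to assign a meaning to intermediate objects, and the bookkeeping of dimensional constants is immaterial since $\mathscr{C}_p$ is only required to depend on $p$ and $d$.
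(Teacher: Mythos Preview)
Your proposal is correct and follows exactly the approach the paper indicates: the paper does not give a detailed proof of this lemma but simply introduces it with the sentence ``we need a Calder\'on--Zygmund inequality, see e.g.\ Theorem 9.9 in \cite{gilb:trud:83}'', and you have faithfully unpacked that reference by writing $\mathbb P$ componentwise as the identity minus a sum of second-order Riesz transforms $\partial_{x_i}\partial_{x_j}(-\Delta)^{-1}$ and invoking their $L^p$-boundedness. Your bookkeeping of the constant $(d+1)$ and the caveat about keeping $D^2(-\Delta)^{-1}$ together rather than handling $(-\Delta)^{-1}$ alone are both appropriate.
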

From this result, we deduce
\begin{eqnarray*}
	\| \mathbb P[\mathbf u](s,\cdot)\cdot \mathbf D \mathbf u(s,\cdot) \|_{L^p}
	&\leq &
4	\mathscr C_ p
	\| \mathbf D \mathbf u(s,\cdot) \|_{L^\infty}
	\| \mathbf u(s,\cdot)\|_{L^p}
	\nonumber \\
	&\leq &
4	\mathscr C_ p
	\| \mathbf D \mathbf u(s,\cdot) \|_{L^\infty}
	\| \mathbf u(s,\cdot)\|_{L^\infty}^{\frac{p-2}p}\| \mathbf u\|_{L^2}^{\frac{2}{p}},
\end{eqnarray*}
by interpolation inequality.
Also from \eqref{ineq_Linfty_semi_NS} and \eqref{ineq_semi_NS_Leray_theo}
\begin{eqnarray*}
	\| \mathbb P[\mathbf u](s,\cdot)\cdot \mathbf D \mathbf u(s,\cdot) \|_{L^p}
		&\leq &
	4	\mathscr C_ p
	\| \mathbf D \mathbf u(s,\cdot) \|_{L^\infty}
	\big (s \|\mathbf f\|_{L^\infty}+\|\mathbf g\|_{L^\infty} \big )^{\frac{p-2}p}
\big ( \sqrt 2 \|\mathbf g\|_{L^2}^2+ 2\|\mathbf f\|_{L^2 L^2}^{2} \big )^{\frac{2}{p}},
\end{eqnarray*}
where we denote $\|\mathbf f\|_{L^2 L^2}^{2}=\int_0^T \| \mathbf f( s,\cdot) \|_{L^2} ^2 d s$.

Hence, from \eqref{ineq_Du_infty_semi_NS1_prelim}
		\begin{eqnarray}\label{ineq_Du_infty_semi_NS2}
&&
\|\mathbf D \mathbf u(t,\cdot) \|_{L^\infty}
\nonumber \\
&\leq& C
[\nu^{-1} t]^{\frac{1}{2}} \|\mathbf f\|_{L^\infty}+\|\mathbf D  \mathbf g \|_{L^\infty}
\nonumber \\
&& + C q^{-\frac{3}{2q}}  \mathscr C_ p \int_0^t [\nu (t-s)]^{-2+\frac{3}{2q}}	\| \mathbf D \mathbf u(s,\cdot) \|_{L^\infty}
\big (s \|\mathbf f\|_{L^\infty}+\|\mathbf g\|_{L^\infty} \big )^{\frac{p-2}p}
\big ( \sqrt 2 \|\mathbf g\|_{L^2}^2+ 2\|\mathbf f\|_{L^2 L^2}^{2}\big ) ^{\frac{2}{p}} ds \nonumber.
\end{eqnarray}
By Gr\"onwall Lemma, we deduce:
		\begin{eqnarray}\label{ineq_Du_infty_semi_NS_almost_final}
&&\|\mathbf D \mathbf u(t,\cdot) \|_{L^\infty}
\nonumber \\
&\leq& 
\Big (C
[\nu^{-1} t]^{\frac{1}{2}} \|\mathbf f\|_{L^\infty}+\|\mathbf D  \mathbf g \|_{L^\infty}
\Big )
\nonumber \\
&&\times \exp \Big (C q^{-\frac{3}{2q}}  \mathscr C_ p \int_0^t [\nu (t-s)]^{-2+\frac{3}{2q}}
\big (s \|\mathbf f\|_{L^\infty}+\|\mathbf g\|_{L^\infty} \big )^{\frac{p-2}p}
\big ( \sqrt 2 \|\mathbf g\|_{L^2}^2+ 2\|\mathbf f\|_{L^2 L^2}^{2}\big ) ^{\frac{2}{p}} ds \Big )
\nonumber \\
&\leq& 
\Big (C 
[\nu^{-1} t]^{\frac{1}{2}} \|\mathbf f\|_{L^\infty}+\|\mathbf D  \mathbf g \|_{L^\infty}
\Big )
\nonumber \\
&&\times \exp \Big (C q^{-\frac{3}{2q}}  \mathscr C_ p \big (t \|\mathbf f\|_{L^\infty}+\|\mathbf g\|_{L^\infty} \big )^{\frac{p-2}p}
\big ( \sqrt 2 \|\mathbf g\|_{L^2}^2+ 2\|\mathbf f\|_{L^2 L^2}^{2}\big ) ^{\frac{2}{p}} \int_0^t (t-s)^{-2+\frac{3}{2q}}
 ds \Big ) 
 \nonumber \\
 &\leq& 
 \Big (C
 [\nu^{-1} t]^{\frac{1}{2}} \|\mathbf f\|_{L^\infty}+\|\mathbf D  \mathbf g \|_{L^\infty}
 \Big ) 
 \nonumber \\
 && \times \exp \Big ( \mathscr C_ p \frac{2q^{1-\frac{3}{2q}}  C}{3-2q} \big (t \|\mathbf f\|_{L^\infty}+\|\mathbf g\|_{L^\infty} \big )^{\frac{p-2}p}
 \big ( \sqrt 2 \|\mathbf g\|_{L^2}^2+ 2\|\mathbf f\|_{L^2 L^2}^{2}\big ) ^{\frac{2}{p}} \nu^{-2+\frac{3}{2q}}t^{\frac{3-2q}{2q}}
  \Big ).
  \nonumber \\
\end{eqnarray}
Hence, we derive
		\begin{eqnarray}\label{ineq_Du_infty_semi_NS_final}
&&\|\mathbf D \mathbf u(t,\cdot) \|_{L^\infty}
 \nonumber \\
&\leq& 
\Big (C
[\nu^{-1} t]^{\frac{1}{2}} \|\mathbf f\|_{L^\infty}+\|\mathbf D  \mathbf g \|_{L^\infty}
\Big )
\nonumber \\
&& \inf_{1\leq q < \frac{3}{2}, \ p^{-1}+q^{-1}=1}\exp \Big ( \mathscr C_ p \frac{2q^{1-\frac{3}{2q}} C}{3-2q} \big (t \|\mathbf f\|_{L^\infty}+\|\mathbf g\|_{L^\infty} \big )^{\frac{p-2}p}
\big ( \sqrt 2 \|\mathbf g\|_{L^2}^2+ 2\|\mathbf f\|_{L^2 L^2}^{2}\big ) ^{\frac{2}{p}} \nu ^{-2+\frac{3}{2q}} t^{\frac{3-2q}{2q}}
\Big )
\nonumber \\
&=:&
\mathbf N_{\eqref{ineq_Du_infty_semi_NS_final}} (t, \mathbf f, \mathbf g)
. \nonumber
\end{eqnarray}

\subsubsection{A second control of the gradient}
\label{sec_decreasing_grad_semi_NS}

From Duhamel formula and queue controls stated in \eqref{ineq_h_nu_beta}, we get for any $(t,x) \in [0,T] \times \R^3$:
\begin{equation*}
|\mathbf D \mathbf u(t,x)| \leq 
C
(1+[\nu t]^{\frac{\beta}{2}})
(1+|x|)^{-\beta} \Big ([\nu^{-1}t]^{\frac{1}{2}} \|\mathbf f\|_{\beta }+\|\mathbf D \mathbf g \|_{\beta} \Big )+ \Big |\int_0^t \int_{\R^3} \mathbf D \tilde p (s,t,x,y) \otimes \mathbb P[\mathbf u]\cdot \mathbf D \mathbf u(s,y) dy \, ds \Big |.
\end{equation*}
By H\"older inequality, we have for any $1 \leq p,q \leq + \infty$:
\begin{equation}\label{ineq_Du_infty_semi_NS1}
\|\mathbf D \mathbf u(t,\cdot) \|_{L^\infty}\leq 
C(1+[\nu t]^{\frac{\beta }{2}})
(1+|x|)^{-\beta} \Big ([\nu^{-1} t]^{\frac{1}{2}} \|\mathbf f\|_{\beta }+\|\mathbf D \mathbf g \|_{\beta} \Big )+ \int_0^t \|\mathbf D \tilde p (s,t,x,\cdot)\otimes \mathbf D \mathbf u(s,\cdot)\|_{L^q} \| \mathbb P\mathbf u \|_{L^p} ds .
\end{equation}
From \eqref{ineq_absorb}, we have
\begin{eqnarray}\label{ineq_Dp_Du_Lp}
	\|\mathbf D \tilde p (s,t,x,\cdot) \mathbf D \mathbf u(s,\cdot) \|_{L^q}
	&=&	\Big (\int_{\R^3} |\mathbf D \tilde p (s,t,x,y) \mathbf D \mathbf u(s,y)|^q dy \Big )^{\frac{1}{q}}
	\nonumber \\
	&\leq &  
C q^{-\frac{3}{2q}}  (1+[\nu t]^{\frac{\beta }{2}}) |\mathbf D \mathbf u\|_{\beta}
	 \big ( \nu (t-s) \big )^{-2+\frac{3}{2q}} 	 (1+|x|)^{- \beta },
\end{eqnarray}
from identity \eqref{ineq_h_nu_beta}, the above time singularity is an integrable singularity, if -2+ $\frac{3}{2q}>-1$.
In other words, we suppose 
\begin{equation}\label{condi_Holder_inq_Du}
q<\frac{3}{2}, \, p>\frac{\frac{3}{2}}{\frac{3}{2}-1}=3. 
\end{equation}
From Lemma \ref{lemme_ineq_Leray_operator_Lp},
\begin{equation*}
	\| \mathbb P[\mathbf u](s,\cdot) \|_{L^p}
	\leq 
	6\mathscr C_ p  \| \mathbf u(s,\cdot)\|_{L^p}
	\leq 
6 	\mathscr C_ p \| \mathbf u(s,\cdot)\|_{L^\infty}^{\frac{p-2}p}\| \mathbf u(s,\cdot)\|_{L^2}^{\frac{2}{p}},
\end{equation*}
by interpolation inequality.

Furthermore, we obtain thanks to the uniform control \eqref{ineq_Linfty_semi_NS} and to the energy control \eqref{ineq_semi_NS_Leray_theo}
\begin{equation*}
	\| \mathbb P[\mathbf u](s,\cdot) \|_{L^p}
	\leq 6 \mathscr C_ p
	\big (s \|\mathbf f\|_{L^\infty}+\|\mathbf g\|_{L^\infty} \big )^{\frac{p-2}p}
	\big ( \sqrt 2 \|\mathbf g\|_{L^2}^2+ 2\|\mathbf f\|_{L^2 L^2}^{2} \big )^{\frac{2}{p}},
\end{equation*}
Hence, from \eqref{ineq_Du_infty_semi_NS1} 	and \eqref{ineq_Dp_Du_Lp}
\begin{eqnarray}\label{ineq_Du_beta_semi_NS2}
&&
\|\mathbf D \mathbf u(t,\cdot) \|_{\beta}
\nonumber \\
&\leq& C (1+[\nu t]^{\frac{\beta }{2}})
\big ([\nu^{-1} t]^{\frac{1}{2}} \|\mathbf f\|_{\beta }+\|\mathbf D \mathbf g \|_{\beta} \big )
\nonumber \\
&&+ C q^{-\frac{3}{2q}}  \mathscr C_ p (1+[\nu t]^{\frac{\beta }{2}})\int_0^t (t-s)^{-2+\frac{3}{2q}}	\| \mathbf D \mathbf u(s,\cdot) \|_{\beta}
\big (s \|\mathbf f\|_{L^\infty}+\|\mathbf g\|_{L^\infty} \big )^{\frac{p-2}p}
\big ( \sqrt 2 \|\mathbf g\|_{L^2}^2+ 2\|\mathbf f\|_{L^2 L^2}^{2}\big ) ^{\frac{2}{p}} ds \nonumber.
\end{eqnarray}
By Gr\"onwall Lemma, we deduce:
\begin{eqnarray}\label{ineq_Du_beta_semi_NS_almost_final}
&&\|\mathbf D \mathbf u(t,\cdot) \|_{\beta}
\nonumber \\
&\leq& 
C (1+[\nu t]^{\frac{\beta }{2}}) \big ([\nu^{-1} t]^{\frac{1}{2}} \|\mathbf f\|_{\beta }+\|\mathbf D \mathbf g \|_{\beta} \big )
\nonumber \\
&&\times \exp \Big (C q^{-\frac{3}{2q}}  \mathscr C_ p (1+[\nu t]^{\frac{\beta }{2}}) \int_0^t [\nu (t-s)]^{-2+\frac{3}{2q}}
\big (s \|\mathbf f\|_{L^\infty}+\|\mathbf g\|_{L^\infty} \big )^{\frac{p-2}p}
\big ( \sqrt 2 \|\mathbf g\|_{L^2}^2+ 2\|\mathbf f\|_{L^2 L^2}^{2}\big ) ^{\frac{2}{p}} ds \Big )
\nonumber \\
&\leq& 
C(1+[\nu t]^{\frac{\beta }{2}}) \big ([\nu^{-1} t]^{\frac{1}{2}} \|\mathbf f\|_{\beta }+\|\mathbf D \mathbf g \|_{\beta} \big )
\nonumber \\
&&\times \exp \Big (Cq^{-\frac{3}{2q}} \mathscr C_ p (1+[\nu t]^{\frac{\beta }{2}}) \big (t \|\mathbf f\|_{L^\infty}+\|\mathbf g\|_{L^\infty} \big )^{\frac{p-2}p}
\big ( \sqrt 2 \|\mathbf g\|_{L^2}^2+ 2\|\mathbf f\|_{L^2 L^2}^{2}\big ) ^{\frac{2}{p}} \int_0^t [\nu (t-s)]^{-2+\frac{3}{2q}}
ds \Big ) 
\nonumber \\
&\leq& 
C (1+[\nu t]^{\frac{\beta }{2}}) \big ([\nu^{-1} t]^{\frac{1}{2}} \|\mathbf f\|_{\beta }+\|\mathbf D \mathbf g \|_{\beta} \big )
\\
&&\times \exp \Big ( \mathscr C_ p (1+[\nu t]^{\frac{\beta }{2}}) \frac{2q^{1-\frac {3}{2q}} C}{3-2q} \big (t \|\mathbf f\|_{L^\infty}+\|\mathbf g\|_{L^\infty} \big )^{\frac{p-2}p}
\big ( \sqrt 2 \|\mathbf g\|_{L^2}^2+ 2\|\mathbf f\|_{L^2 L^2}^{2}\big ) ^{\frac{2}{p}} \nu^{-2+\frac{3}{2q}} t^{\frac{3-2q}{2q}}
\Big ). \nonumber 
\end{eqnarray}
Hence, we derive
\begin{eqnarray}\label{ineq_Du_beta_semi_NS_final}
&&\|\mathbf D \mathbf u(t,\cdot) \|_{\beta}
\nonumber \\
&\leq& 
C (1+[\nu t]^{\frac{\beta }{2}}) \big ([\nu t]^{\frac{1}{2}} \|\mathbf f\|_{\beta }+\|\mathbf D \mathbf g \|_{\beta} \big )
\inf_{1\leq q < \frac{3}{2}, \ p^{-1}+q^{-1}=1}
\nonumber \\
&&
\Big \{\exp \Big ( \mathscr C_ p (1+[\nu t]^{\frac{\beta }{2}}) \frac{2q^{1-\frac{3}{2q}} C}{3-2q} \big (t \|\mathbf f\|_{L^\infty}+\|\mathbf g\|_{L^\infty} \big )^{\frac{p-2}p}
\big ( \sqrt 2 \|\mathbf g\|_{L^2}^2+ 2\|\mathbf f\|_{L^2 L^2}^{2}\big ) ^{\frac{2}{p}} \nu^{-2+\frac{3}{2q}} t^{\frac{3-2q}{2q}}
\Big ) \Big \}
\nonumber \\
&=:&
\mathbf N_{\eqref{ineq_Du_beta_semi_NS_final}} (t, \mathbf f, \mathbf g)
. 
\end{eqnarray}
From a precise analysis of the Poisson equation, it is well known that:
\begin{equation}\label{Poisson_queue}
\|\P \mathbf D \mathbf u \|_{L^\infty,\beta-2} \leq C \|\mathbf D \mathbf u \|_{\beta},
\end{equation}
see e.g. \cite{lema:16}.
We can now give a first point-wise estimate of the Leray-Hopf projector of the gradient of the solution, i.e.
\begin{equation}\label{ineq_beta_P_u_semi_NS}
	\|\P \mathbf D \mathbf u \|_{L^\infty,\beta-2} 
	\leq C \mathbf N_{\eqref{ineq_Du_beta_semi_NS_final}} (T, \mathbf f, \mathbf g).
\end{equation}

\subsubsection{A second point-wise control of the velocity}
\label{sec_decreasing_semi_NS}

From Duhamel formula, we get for any $(t,x) \in [0,T] \times \R^3$:
\begin{equation*}
| \mathbf u(t,x)| \leq 
C(1+[\nu t]^{\frac{\beta }{2}}) (1+|x|)^{-\beta} \Big (t \|\mathbf f\|_{\beta }+\|\mathbf g \|_{\beta} \Big )+\sup_{x \in \R^3} \Big |\int_0^t \int_{\R^3} \tilde p (s,t,x,y) \mathbb P[\mathbf u]\cdot \mathbf D \mathbf u(s,y) dy \, ds \Big |.
\end{equation*}
By H\"older inequality, we have for any $1 \leq p,q \leq + \infty$:
\begin{equation}\label{ineq_u_beta_semi_NS1}
| \mathbf u(t,x) |
\leq 
C (1+[\nu t]^{\frac{\beta }{2}}) (1+|x|)^{-\beta} \Big (t \|\mathbf f\|_{\beta }+\|\mathbf D \mathbf g \|_{\beta} \Big )+\sup_{x \in \R^3} \int_0^t \|\tilde p (s,t,x,\cdot)\mathbf D \mathbf u(s,\cdot)\|_{L^q} \| \mathbb P\mathbf u \|_{L^p} ds .
\end{equation}
From \eqref{ineq_absorb}, we have
\begin{eqnarray*}
	\| \tilde p (s,t,x,\cdot) \mathbf D \mathbf u(s,\cdot) \|_{L^q}
	&=&	\Big (\int_{\R^3} | \tilde p (s,t,x,y) \mathbf D \mathbf u(s,y)|^q dy \Big )^{\frac{1}{q}}
	\nonumber \\
	&\leq &  Cq^{-\frac{3}{2q}} (1+[\nu t]^{\frac{\beta }{2}})  \|\mathbf D \mathbf u\|_{\beta}
	\big ( \nu (t-s) \big )^{-\frac{3(q-1)}{2q}}
	(1+|x|)^{- \beta }
	\nonumber \\
	&\leq & C q^{-\frac{3}{2q}}  (1+[\nu t]^{\frac{\beta }{2}}) \mathbf N_{\eqref{ineq_Du_infty_semi_NS_final}} (t, \mathbf f, \mathbf g)
	\big ( \nu (t-s) \big )^{-\frac{3(q-1)}{2q}}	 (1+|x|)^{- \beta },
\end{eqnarray*}
from  \ref{ineq_h_nu_beta}, the above time singularity is an integrable singularity, if $\frac{3(q-1)}{2q}>1$.
In other words, we suppose 
\begin{equation}\label{condi_Holder_inq_u}
q<3, \, p>\frac{3}{3-1}=\frac{3}{2}. 
\end{equation}

Next, by the Calder\`on-Zygmund control stated in Lemma \ref{lemme_ineq_Leray_operator_Lp},
\begin{equation*}
\| \mathbb P[\mathbf u] (s,\cdot)\|_{L^p}
\leq 4
\mathscr C_p
\| \mathbf u(s,\cdot)\|_{L^p}
\leq 4
\mathscr C_p
\| \mathbf u(s,\cdot)\|_{L^\infty}^{\frac{p-2}p}\| \mathbf u(s,\cdot)\|_{L^2}^{\frac{2}{p}},
\end{equation*}
by interpolation inequality.

Also from \eqref{ineq_Linfty_semi_NS} and \eqref{ineq_semi_NS_Leray_theo}
\begin{equation*}
	\| \mathbb P[\mathbf u](s,\cdot) \|_{L^p}
	\leq 4
	\mathscr C_p
	\big (s \|\mathbf f\|_{L^\infty}+\|\mathbf g\|_{L^\infty} \big )^{\frac{p-2}p}
	\big ( \sqrt 2 \|\mathbf g\|_{L^2}^2+ 2\|\mathbf f\|_{L^2 L^2}^{2} \big )^{\frac{2}{p}}.
\end{equation*}
Hence, from \eqref{ineq_u_beta_semi_NS1}
\begin{eqnarray}\label{ineq_u_beta_semi_NS_almost_final}
&&
\|\mathbf u(t,\cdot) \|_{\beta}
\nonumber \\
&\leq& C (1+[\nu t]^{\frac{\beta }{2}}) \Big (
\big ([\nu^{-1}t]^{\frac{1}{2}} \|\mathbf f\|_{\beta }+\|\mathbf D \mathbf g \|_{\beta} \big )
\nonumber \\
&&+ C q^{-\frac{3}{2q}}  \mathscr C_p \int_0^t  \mathbf N_{\eqref{ineq_Du_beta_semi_NS_final}} (s, \mathbf f, \mathbf g)
\big ( \nu (t-s) \big )^{-\frac{3(q-1)}{2q}}	
\big (s \|\mathbf f\|_{L^\infty}+\|\mathbf g\|_{L^\infty} \big )^{\frac{p-2}p}
\big ( \sqrt 2 \|\mathbf g\|_{L^2}^2+ 2\|\mathbf f\|_{L^2 L^2}^{2}\big ) ^{\frac{2}{p}} ds \Big )
\nonumber \\
&\leq& C (1+[\nu t]^{\frac{\beta }{2}}) \Big (
\big ([\nu^{-1} t]^{\frac{1}{2}} \|\mathbf f\|_{\beta }+\|\mathbf D \mathbf g \|_{\beta} \big )
\\
&&+ \mathscr C_p \frac{C q^{1-\frac{3}{2q}}}{3-q}  
\nu ^{-\frac{3(q-1)}{2q}} t^{\frac{3-q}{2q}}	
 \mathbf N_{\eqref{ineq_Du_beta_semi_NS_final}} (t, \mathbf f, \mathbf g)
\big (t \|\mathbf f\|_{L^\infty}+\|\mathbf g\|_{L^\infty} \big )^{\frac{p-2}p}
\big ( \sqrt 2 \|\mathbf g\|_{L^2}^2+ 2\|\mathbf f\|_{L^2 L^2}^{2}\big ) ^{\frac{2}{p}} \Big )
\nonumber
.
\end{eqnarray}
So
\begin{eqnarray}\label{ineq_u_beta_semi_NS_final}
&&\|\mathbf u(t,\cdot) \|_{\beta}
\nonumber \\
&\leq& C
(1+[\nu t]^{\frac{\beta }{2}})
\Big ([\nu ^{-1} t]^{\frac{1}{2}} \|\mathbf f\|_{\beta }+\|\mathbf D \mathbf g \|_{\beta} 
\nonumber \\`
&&
+ \! \! \!  \inf_{1\leq q < 3, \ p^{-1}+q^{-1}=1} \mathscr C_p \frac{q^{1-\frac{3}{2q}}}{3-q}  
\nu ^{-\frac{3(q-1)}{2q}} t^{\frac{3-q}{2q}}	
\mathbf N_{\eqref{ineq_Du_beta_semi_NS_final}} (t, \mathbf f, \mathbf g)
\big (t \|\mathbf f\|_{L^\infty}+\|\mathbf g\|_{L^\infty} \big )^{\frac{p-2}p}
\big ( \sqrt 2 \|\mathbf g\|_{L^2}^2+ 2\|\mathbf f\|_{L^2 L^2}^{2}\big ) ^{\frac{2}{p}} 
\Big )
\nonumber \\
&=:&
\mathbf N_{\eqref{ineq_u_beta_semi_NS_final}} (t, \mathbf f, \mathbf g).
\end{eqnarray}

Let us also precise that, like in \eqref{Poisson_queue}, we have the crucial point-wise estimate of the Leray-Hopf projector of the solution, i.e.
\begin{equation}\label{ineq_beta_P_u_semi_NS}
\|\P \mathbf u \|_{L^\infty,\beta-2} \leq C \| \mathbf u \|_{L^\infty,\beta} \leq C \mathbf N_{\eqref{ineq_u_beta_semi_NS_final}} (T, \mathbf f, \mathbf g).
\end{equation}

\subsubsection{A first control of the Hessian}
\label{sec_D2_u_infty_semi_NS}

Still by Duhamel formula, for any $t \in [0,T]$:
\begin{eqnarray*}
\|\mathbf D^2 \mathbf u(t,\cdot) \|_{L^\infty}
&\leq& 
C \nu^{-1+ \frac{\gamma}{2}} t^{\frac{\gamma}{2}} \|\mathbf f\|_{L^\infty(C^\gamma)}
\nonumber
+\|\mathbf D^2  \mathbf g \|_{L^\infty} 
\nonumber \\
&&+\sup_{x \in \R^3} \Big |\int_0^t \int_{\R^3} \mathbf D \tilde p (s,t,x,y) \mathbf D  \big (\mathbb P[\mathbf u]\cdot \mathbf D \mathbf u \big )(s,y) dy \, ds \Big |
\nonumber \\
&\leq &
C \nu^{-1+ \frac{\gamma}{2}}t^{\frac{\gamma}{2}} \|\mathbf f\|_{L^\infty(C^\gamma)}+\|\mathbf D^2  \mathbf g \|_{L^\infty} 
+\sup_{x \in \R^3} \Big |\int_0^t \int_{\R^3} \mathbf D \tilde p (s,t,x,y) \mathbb P[\mathbf D \mathbf u]\cdot \mathbf D \mathbf u (s,y) dy \, ds \Big |
\nonumber \\
&&+\sup_{x \in \R^3} \Big |\int_0^t \int_{\R^3} \mathbf D \tilde p (s,t,x,y) \mathbb P[\mathbf u]\cdot \mathbf D^2 \mathbf u (s,y) dy \, ds \Big |
.
\end{eqnarray*}
We can use the previous point-wise controls to obtain
\begin{eqnarray*}
	\|\mathbf D^2 \mathbf u(t,\cdot) \|_{L^\infty}
	&\leq &
C	\nu^{-1+ \frac{\gamma}{2}} t^{\frac{\gamma}{2}} \|\mathbf f\|_{L^\infty(C^\gamma)}+\|\mathbf D^2  \mathbf g \|_{L^\infty} 
	+ C
	\int_0^t [\nu (t-s)]^{-\frac{1}{2}}  \|\mathbf D \mathbf u(s,\cdot)\|_\beta \|\mathbf D \mathbf u (s,\cdot) \|_{L^\infty} ds
	\nonumber \\
	&&
	+ C \int_0^t [\nu (t-s)]^{-\frac{1}{2}}  \|\mathbf u(s,\cdot)\|_\beta \|\mathbf D^2 \mathbf u (s,\cdot) \|_{L^\infty} ds
	\nonumber \\
		&\leq &
	C \nu^{-1+ \frac{\gamma}{2}}t^{\frac{\gamma}{2}} \|\mathbf f\|_{L^\infty(C^\gamma)}+\|\mathbf D^2  \mathbf g \|_{L^\infty} 
	+ C \nu^{-\frac{1}{2}}t^{\frac{1}{2}} \mathbf N_{\eqref{ineq_Du_beta_semi_NS_final}} ^2 (t, \mathbf f, \mathbf g)
	\nonumber \\
	&&
	+ C \int_0^t [\nu (t-s)]^{-\frac{1}{2}}  \mathbf N_{\eqref{ineq_u_beta_semi_NS_final}} (s, \mathbf f, \mathbf g) \|\mathbf D^2 \mathbf u (s,\cdot) \|_{L^\infty} ds
 .
\end{eqnarray*}
We finally get by Gr\"onwall's lemma:
\begin{eqnarray}\label{ineq_D2u_infty_semi_NS_final}
	&&\|\mathbf D^2 \mathbf u(t,\cdot) \|_{L^\infty}
	\nonumber \\
	&\leq &
	\Big (C \nu^{-1+ \frac{\gamma}{2}}t^{\frac{\gamma}{2}} \|\mathbf f\|_{L^\infty(C^\gamma)}+\|\mathbf D^2  \mathbf g \|_{L^\infty} 
	+ C [\nu^{-1}t]^{\frac{1}{2}} \mathbf N_{\eqref{ineq_Du_beta_semi_NS_final}} ^2 (t, \mathbf f, \mathbf g) \Big )
	\nonumber \\
	&&\times \exp \Big (
	C \int_0^t [\nu (t-s)]^{-\frac{1}{2}}  \mathbf N_{\eqref{ineq_u_beta_semi_NS_final}} (s, \mathbf f, \mathbf g) ds \Big )
	\nonumber \\
		&\leq &
	\Big (C \nu^{-1+ \frac{\gamma}{2}}t^{\frac{\gamma}{2}} \|\mathbf f\|_{L^\infty(C^\gamma)}+\|\mathbf D^2  \mathbf g \|_{L^\infty} 
	+ C [\nu^{-1}t]^{\frac{1}{2}} \mathbf N_{\eqref{ineq_Du_beta_semi_NS_final}} ^2 (t, \mathbf f, \mathbf g) \Big )
	\exp \Big (
	C [\nu^{-1} t]^{\frac{1}{2}} 
	\mathbf  N_{\eqref{ineq_u_beta_semi_NS_final}} (t, \mathbf f, \mathbf g)  \Big )
	\nonumber \\
	&=:&
\mathbf 	N_{\eqref{ineq_D2u_infty_semi_NS_final}} (t, \mathbf f, \mathbf g)  
	.
\end{eqnarray}

\subsubsection{A second control of the Hessian}
\label{sec_D2_u_beta_semi_NS}

We also have, for any $t \in [0,T]$:
\begin{eqnarray*}
		&&
	|\mathbf D^2 \mathbf u(t,x)|
	\nonumber \\
		&\leq &
	C(1+[\nu t]^{\frac{\beta }{2}})(1+|x|)^{-\beta} \big (t^{\frac{1}{2}} \|\mathbf D \mathbf  f\|_{L^\infty,\beta }
+\|\mathbf D^2  \mathbf g \|_{\beta } \big ) 	+\Big |\int_0^t \int_{\R^3} \mathbf D \tilde p (s,t,x,y) \mathbb P[\mathbf D \mathbf u]\cdot \mathbf D \mathbf u (s,y) dy \, ds \Big |
	\nonumber \\
	&&+\Big |\int_0^t \int_{\R^3} \mathbf D \tilde p (s,t,x,y) \otimes \mathbb P[\mathbf u]\cdot \mathbf D^2 \mathbf u (s,y) dy \, ds \Big |
	.
\end{eqnarray*}
We can use the previous point-wise controls to obtain
\begin{eqnarray*}
	&&\|\mathbf D^2 \mathbf u(t,\cdot) \|_{\beta}
	\nonumber \\
	&\leq &
C (1+[\nu t]^{\frac{\beta }{2}})\big ([\nu^{-1} t]^{\frac{1}{2}} \|\mathbf D \mathbf  f\|_{L^\infty(\beta )}
+\|\mathbf D^2  \mathbf g \|_{\beta } \big )
	+ C (1+[\nu t]^{\frac{\beta }{2}})
	\int_0^t [\nu (t-s)]^{-\frac{1}{2}}  \|\mathbf D \mathbf u(s,\cdot)\|_\beta \|\mathbf D \mathbf u (s,\cdot) \|_{\beta } ds
	\nonumber \\
	&&
	+C (1+[\nu t]^{\frac{\beta}{2}}) \int_0^t [\nu (t-s)]^{-\frac{1}{2}}  \|\mathbf u(s,\cdot)\|_\beta \|\mathbf D^2 \mathbf u (s,\cdot) \|_{\beta} ds
	\nonumber \\
	&\leq &
C(1+[\nu t]^{\frac{\beta}{2}}) \Big (([\nu^{-1} t]^{\frac{1}{2}} \|\mathbf D \mathbf f\|_{\beta }+\|\mathbf D^2  \mathbf g \|_{\beta } )
	+  [\nu ^{-1}t]^{\frac{1}{2}} \mathbf N_{\eqref{ineq_Du_beta_semi_NS_final}} ^2 (t, \mathbf f, \mathbf g)
	\nonumber \\
&&
	+ 
	 \int_0^t [\nu (t-s)]^{-\frac{1}{2}} \mathbf  N_{\eqref{ineq_u_beta_semi_NS_final}} (s, \mathbf f, \mathbf g) \|\mathbf D^2 \mathbf u (s,\cdot) \|_{L^\infty} ds \Big )
	.
\end{eqnarray*}
We finally get by Gr\"onwall's lemma:
\begin{eqnarray}\label{ineq_D2u_beta_semi_NS_final}
\|\mathbf D^2 \mathbf u(t,\cdot) \|_{\beta}
&\leq &
C(1+[\nu t]^{\frac{\beta }{2}}) \Big ([\nu^{-1}t]^{\frac{1}{2}} \|\mathbf D \mathbf  f\|_{\beta }+\|\mathbf D^2  \mathbf g \|_{\beta} 
+ C [\nu^{-1}t]^{\frac{1}{2}} \mathbf N_{\eqref{ineq_Du_beta_semi_NS_final}} ^2 (t, \mathbf f, \mathbf g) \Big )
\nonumber \\
&& \times \exp \Big (
C(1+[\nu t]^{\frac{\beta }{2}}) \int_0^t [\nu (t-s)]^{-\frac{1}{2}}  N_{\eqref{ineq_u_beta_semi_NS_final}} (s, \mathbf f, \mathbf g) ds \Big )
\nonumber \\
&\leq &
C(1+[\nu t]^{\frac{\beta }{2}}) \Big ([\nu^{-1}t]^{\frac{1}{2}} \|\mathbf D \mathbf f\|_{\beta }+\|\mathbf D^2  \mathbf g \|_{\beta} 
+ C t^{\frac{1}{2}} \mathbf N_{\eqref{ineq_Du_beta_semi_NS_final}} ^2 (t, \mathbf f, \mathbf g) \Big )
\nonumber \\
&& \times \exp \Big ( C(1+[\nu t]^{\frac{\beta }{2}})
[\nu^{-1}t]^{\frac{1}{2}} \mathbf  N_{\eqref{ineq_u_beta_semi_NS_final}} (t, \mathbf f, \mathbf g)  \Big )
\nonumber \\
&=:&
\mathbf N_{\eqref{ineq_D2u_beta_semi_NS_final}} (t, \mathbf f, \mathbf g)  
.
\end{eqnarray}

\subsubsection{Control of the H\"older modulus of Hessian}
\label{sec_D2_u_Holder_semi_NS}

Similarly, we obtain
\begin{eqnarray}\label{ineq_NS_D2_Holder1}
		[\mathbf D^2 \mathbf u(t,\cdot ) ]_\gamma
	&\leq& 
\nu^{-1+ \frac{\gamma}{2}}(1+\nu^{-\frac{1}{2}})  \|\mathbf f\|_{L^\infty(C^\gamma)}+\|\mathbf D^2  \mathbf g \|_{L^\infty(C^\gamma)} 
\nonumber \\
&&+ \Big [ \int_0^t \int_{\R^3} \mathbf D \tilde p (s,t,\cdot ,y) \mathbf D  \big (\mathbb P[\mathbf u]\cdot \mathbf D \mathbf u \big )(s,y) dy \, ds \Big ]_\gamma
	.
\end{eqnarray}
Let us separate the diagonal and the off-diagonal cases as in Appendix.
For any $(x,x') \in \R^3\times \R^3$, we first write 
\begin{eqnarray}\label{ineq_Holder_D2_u_semi_NS1}
	&& \Big  | \int_{t+|x-x'|^2}^t \int_{\R^3} \big [ \mathbf D^2  \tilde p (s,t,x,y)-\mathbf D^2  \tilde p (s,t,x',y) \big ] \mathbb P[\mathbf D \mathbf u]\cdot \mathbf D \mathbf u (s,y) dy \, ds 
	 \Big |
	\nonumber \\
	&\leq & |x-x'| \times  \|\P[\mathbf D \mathbf u]\cdot \mathbf D \mathbf u (s,y)  \|_{L^\infty(C^\gamma)} \int_{t+|x-x'|^2}^t [\nu(t-s)]^{\frac{\gamma-3}{2}} ds
	\nonumber \\
	&\leq &
	C \nu^{\frac{\gamma-3}{2}}|x-x'|^\gamma 
	 \|\P[ \mathbf u]\cdot \mathbf D \mathbf u (s,y)  \|_{L^\infty}^{1-\gamma}
	 \big (   \|\P[\mathbf D \mathbf u]\cdot \mathbf D \mathbf u (s,y)  \|_{L^\infty}
	 + \|\P[ \mathbf u]\cdot \mathbf D ^2\mathbf u (s,y)  \|_{L^\infty} \big )^\gamma
	  \nonumber \\
	  &\leq &
	  C \nu^{\frac{\gamma-3}{2}} |x-x'|^\gamma 
	\mathbf N_{\eqref{ineq_u_beta_semi_NS_final}}^{1-\gamma} (t, \mathbf f, \mathbf g) 
	\mathbf N_{\eqref{ineq_Du_infty_semi_NS_final}}^{1-\gamma} (t, \mathbf f, \mathbf g)
	\Big ( 	C \mathbf N_{\eqref{ineq_Du_beta_semi_NS_final}}^2 (t, \mathbf f, \mathbf g)
	+C \mathbf N_{\eqref{ineq_u_beta_semi_NS_final}} (t, \mathbf f, \mathbf g) \mathbf N_{\eqref{ineq_D2u_infty_semi_NS_final}} (t, \mathbf f, \mathbf g)  \Big )^\gamma
	\nonumber \\
	&=:& 
	|x-x'|^\gamma
\mathbf 	N_{\eqref{ineq_Holder_D2_u_semi_NS1}} (t, \mathbf f, \mathbf g)
	.
\end{eqnarray}
Next,
\begin{eqnarray}\label{ineq_Holder_D2_u_semi_NS2}
&& \Big  | \int_0^{t+|x-x'|^2} \int_{\R^3} \big [ \mathbf D^2  \tilde p (s,t,x,y)-\mathbf D^2  \tilde p (s,t,x',y) \big ] \mathbb P[\mathbf D \mathbf u]\cdot \mathbf D \mathbf u (s,y) dy \, ds 
\Big |
\nonumber \\
&\leq & \|\P[\mathbf D \mathbf u]\cdot \mathbf D \mathbf u (s,y)  \|_{L^\infty(C^\gamma)} \int_{t+|x-x'|^2}^t [\nu(t-s)]^{\frac{\gamma-2}{2}} ds
\nonumber \\
&\leq &
C \nu ^{\frac{\gamma-2}{2}} |x-x'|^\gamma 
\mathbf N_{\eqref{ineq_u_beta_semi_NS_final}}^{1-\gamma} (t, \mathbf f, \mathbf g) 
\mathbf N_{\eqref{ineq_Du_infty_semi_NS_final}}^{1-\gamma} (t, \mathbf f, \mathbf g)
\Big ( 	C \mathbf N_{\eqref{ineq_Du_beta_semi_NS_final}}^2 (t, \mathbf f, \mathbf g)
+C \mathbf N_{\eqref{ineq_u_beta_semi_NS_final}} (t, \mathbf f, \mathbf g)\mathbf N_{\eqref{ineq_D2u_infty_semi_NS_final}} (t, \mathbf f, \mathbf g)  \Big )^\gamma
\nonumber \\
&=:& 
|x-x'|^\gamma
\mathbf N_{\eqref{ineq_Holder_D2_u_semi_NS2}} (t, \mathbf f, \mathbf g)
.
\end{eqnarray}
Hence, gathering \eqref{ineq_NS_D2_Holder1}-\eqref{ineq_Holder_D2_u_semi_NS2}
\begin{eqnarray}\label{ineq_Holder_D2_u_semi_NS}
[\mathbf D^2 \mathbf u(t,\cdot ) ]_\gamma
	&\leq& 
C\nu^{-1+ \frac{\gamma}{2}}(1+\nu^{-\frac{1}{2}})   \|\mathbf f\|_{L^\infty(C^\gamma)}+C \|\mathbf D^2  \mathbf g \|_{L^\infty(C^\gamma)} + \mathbf N_{\eqref{ineq_Holder_D2_u_semi_NS1}} (t, \mathbf f, \mathbf g)
+ \mathbf N_{\eqref{ineq_Holder_D2_u_semi_NS2}} (t, \mathbf f, \mathbf g)
\nonumber \\
&=:& \mathbf N_{\eqref{ineq_Holder_D2_u_semi_NS}} (t, \mathbf f, \mathbf g).
\end{eqnarray}

\subsubsection{A control of the time derivative}
\label{sec_partial_t_u_beta_semi_NS}

For any $t \in [0,T]$, it is direct that:
\begin{eqnarray}\label{ineq_partial_u_unif}
	|\partial_t \mathbf u(t,x)|
	&\leq &
	C (1+[\nu t]^{\frac{\beta }{2}}) (1+|x|)^{-\beta} \big (\nu^{-\frac{1}{2}} t^{\frac{1}{2}} \|\mathbf D \mathbf  f\|_{L^\infty(\beta )}
	+\|\mathbf D^2  \mathbf g \|_{\beta } \big ) 	+\Big | \mathbb P[\mathbf u]\cdot \mathbf D \mathbf u (t,x)\Big |
	\nonumber \\
	&&+\Big |\int_0^t \int_{\R^3} \partial_t \tilde p (s,t,x,y) \mathbb P[\mathbf u]\cdot \mathbf D \mathbf u (s,y) dy \, ds \Big |
	.
\end{eqnarray}
As previously, we get:
\begin{eqnarray}\label{ineq_partial_t_u_beta_semi_NS_final}
	\|\partial_t \mathbf u(t,\cdot) \|_{\beta}
	&\leq &
C	(1+[\nu t]^{\frac{\beta }{2}}) \Big (
\big (\nu^{-\frac{1}{2}} t^{\frac{1}{2}} \|\mathbf D \mathbf  f\|_{L^\infty(\beta )}
+\|\mathbf D^2  \mathbf g \|_{\beta } \big )
+  \mathbf N_{\eqref{ineq_u_beta_semi_NS_final}} (t, \mathbf f, \mathbf g)
\mathbf N_{\eqref{ineq_Du_beta_semi_NS_final}} (t, \mathbf f, \mathbf g)
\nonumber \\	
&&	+   [\nu^{-1}t]^{\frac{1}{2}} \big ( \mathbf N_{\eqref{ineq_Du_beta_semi_NS_final}} ^2 (s, \mathbf f, \mathbf g)
	+   \mathbf  N_{\eqref{ineq_u_beta_semi_NS_final}} (s, \mathbf f, \mathbf g) \mathbf N_{\eqref{ineq_D2u_beta_semi_NS_final}} (t, \mathbf f, \mathbf g) \big ) \Big )
\nonumber \\
&=:&
\mathbf N_{\eqref{ineq_partial_t_u_beta_semi_NS_final}} (t, \mathbf f, \mathbf g)	.
\end{eqnarray}

\subsubsection{Spatial H\"older modulus of the time derivative}
\label{sec_partial_t_u_Holder_semi_NS}

The last estimate readily derives from Section \ref{sec_D2_u_Holder_semi_NS}:
\begin{equation}\label{ineq_Holder_partial_t_u_semi_NS}
[\partial_t \mathbf u(t,\cdot ) ]_\gamma
\leq 
\|\mathbf f\|_{L^\infty(C_b^\gamma)}+\|\mathbf D^2  \mathbf g \|_{L^\infty(C^\gamma)} + \mathbf N_{\eqref{ineq_Holder_D2_u_semi_NS1}} (t, \mathbf f, \mathbf g)
+\mathbf N_{\eqref{ineq_Holder_D2_u_semi_NS2}} (t, \mathbf f, \mathbf g)
=: \mathbf N_{\eqref{ineq_Holder_partial_t_u_semi_NS}} (t, \mathbf f, \mathbf g).
\end{equation}

The Schauder estimates are then established.


\mysection{Non-linear equations}
\label{sec_non_line}

For $0<r\leq d$, we consider the non-linear equation defined for a given $T>0$ (arbitrary big) by
\begin{equation}
\label{Non_linear_equation_v1}
\begin{cases}
\partial_t\mathbf u(t,x)+  \mathbf P (  \mathbf u, \mathbf D \mathbf u(t,x) ) + \mathbf c(t) \otimes \mathbf u(t,x) 
= \mathbf  D^2 \mathbf u(t,x):   a(t)+ \mathbf f(t,x) 
,\ t\in (0,T],\\
\mathbf u(0,x)=\mathbf g(x),
\end{cases}
\end{equation}
where $\mathbf P$ is a locally bounded function.
We point out that here the first input  of $\mathbf P$ does not depend on the current point $x$ and the second input the non-linearity is supposed to be local.
\\

For the sake of simplicity, we do not consider a general non-linearity $\mathbb C(\mathbf u)$ as in quasi-linear equation \eqref{Quasi_equation_v1}.
 For such a non-linearity, we would consider a hypothesis of the kind $\|\mathbf D \mathbb C(\mathbf u)\|_{L^\infty}\leq \|\mathbf c\|_{L^\infty}\|\mathbf D \mathbf u\|_{L^\infty}$.
\\

	\textbf{\large{Assumptions}}
\begin{trivlist}
	\item[\A{P${}_{\mathbf P}$}]
	There is a non-negative real function $\mathscr M_{\mathbf P}: \R_+\longrightarrow \R_+$ locally bounded such that, for all $\mathbf b \in C^\infty_0([0,T] \times \R^d, \R^r)$, $\mathbf c \in C^\infty_0([0,T] \times \R^d, \R^r)$ and $\gamma \in (0,1]$), 
	\begin{eqnarray}\label{hypo_A}
	\|(t,x) \mapsto \mathbf P(\mathbf c, \mathbf b(t,x)))\|_{L^\infty} &\leq &\mathscr M_{\mathbf P}(\|\mathbf b\|_{L^\infty}) (1+ \|\mathbf c\|_{L^\infty} ),
	\nonumber \\
	\|(t,x)\mapsto \mathbf P(\mathbf c,\mathbf b(t,x))\|_{L^\infty(C_b^\gamma)} &\leq & \mathscr M_{\mathbf P} ( \|\mathbf b\|_{L^\infty(C_b^\gamma)})(1+ \|\mathbf c\|_{L^\infty} ),
	\end{eqnarray}
	with, for $\gamma=1$, 	$\|(t,x) \mapsto \mathbf D \mathbf P(\mathbf c,\mathbf b(t,x))\|_{L^\infty} \leq  \mathscr M_{\mathbf P} ( \|\mathbf D \mathbf b\|_{L^\infty})(1+ \| \mathbf c(t,\cdot )\|_{C^1_b} )$, with $\mathbf D \mathbf P(\mathbf c,\mathbf b(t,x))$ the Gateau  
	derivative of $b \rightarrow  \mathbf P(\mathbf c,\mathbf b(t,x))$ for a given $\mathbf c$.
	\item[\A{F${}_{\mathbf P}$}]
	There is a non-negative  function $\tilde{\mathscr M}_{\mathbf P}: \R_+^4\longrightarrow \R_+$ locally bounded such that, for all $\mathbf b_1, \mathbf b_2 \in C^\infty_0([0,T] \times \R^d, \R^r)$ and $\mathbf c_1, \mathbf c_2 \in C^\infty_0([0,T] \times \R^d, \R^r)$, 
	\begin{equation}\label{hypo_A_func}
	\|\mathbf P(\mathbf c_1, \mathbf b_1)-\mathbf P(\mathbf c_2,\mathbf b_2)\|_{L^\infty} \leq \|\mathbf b_1-\mathbf b_2\|_{L^\infty} \tilde {\mathscr M}_{\mathbf P}(\|\mathbf b_1\|_{L^\infty}, \|\mathbf b_2\|_{L^\infty},\|\mathbf c_1\|_{L^\infty}, \|\mathbf c_2\|_{L^\infty}).
	\end{equation}
\end{trivlist}

\begin{THM}\label{THEO_SCHAU_NON}
	We suppose \A{E}, \A{P${}_{\mathbf P}$} and \A{F${}_{\mathbf P}$}.
For  $\gamma \in (0,1)$ be given.
For all $\mathbf f \in L^\infty([0,T];  C_b^{1}(\R^d,\R^r))$, $\mathbf g \in C^{2+\gamma}_b(\R^d,\R^r)$ and $\mathbf c \in L^\infty([0,T], \R^r)$, 
there is  a unique strong solution $\mathbf u \in  L^\infty([0,T];C_b^{2+\gamma}(\R^{d},\R^r)) \cap C^1_b([0,T];C_b^{k+\gamma}(\R^{d},\R^r)) $ of \eqref{Non_linear_equation_v1}. 

\end{THM}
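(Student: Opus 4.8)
\noindent The plan is to follow closely the strategy of Section~\ref{sec_proof_Quasi}, the only genuinely new feature being that $\mathbf P$ acts on $\mathbf D\mathbf u$, so the freezing must absorb the gradient as well and the state space has to be one derivative smoother. For a given $\mathbf b\in L^\infty([0,T];C_b^{1+\gamma}(\R^d,\R^r))$ I would set $\mathscr H_L(\mathbf f,\mathbf g)[\mathbf b]:=\tilde{\mathbf u}(\mathbf b)$, the unique strong solution furnished by Theorem~\ref{THEO_SCHAU} applied with vanishing first order coefficient, zero order coefficient $\mathbf c$ (which is $x$-independent, hence lies in $L^\infty([0,T];C_b^{\gamma})$), and source $\mathbf f-\mathbf P(\mathbf b,\mathbf D\mathbf b)$, of
\begin{equation*}
\partial_t\tilde{\mathbf u}(\mathbf b)+\mathbf c(t)\otimes\tilde{\mathbf u}(\mathbf b)=\mathbf D^2\tilde{\mathbf u}(\mathbf b):a(t)+\mathbf f-\mathbf P(\mathbf b,\mathbf D\mathbf b),\qquad \tilde{\mathbf u}(\mathbf b)(0,\cdot)=\mathbf g.
\end{equation*}
The source belongs to $L^\infty([0,T];C_b^{\gamma}(\R^d,\R^r))$ by assumption~\A{P${}_{\mathbf P}$}, since $\mathbf D\mathbf b\in L^\infty([0,T];C_b^{\gamma})$; this is precisely why the natural state space is $C_b^{1+\gamma}$ and not the $C_b^{1}$ of the quasi-linear case. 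A fixed point of $\mathscr H_L(\mathbf f,\mathbf g)[\cdot]$ solves \eqref{Non_linear_equation_v1}, and it would be produced through Schaefer's Theorem~\ref{theo_schaef}.

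The heart of the matter is the \textit{a priori} bound. Assuming $\mathbf u=\mu\,\mathscr H_L(\mathbf f,\mathbf g)[\mathbf u]$ for some $\mu\in[0,1]$, I would feed the Schauder chain of Section~\ref{sec_Schauder_linear} with vanishing drift — so that the exponential factors carrying $\|\mathbf b\|_{L^\infty}$ in \eqref{ineq_grad_linear} and \eqref{ineq_Hess_linear} collapse to $1$ — and with source $\mathbf f-\mathbf P(\mathbf u,\mathbf D\mathbf u)$. The resulting estimates \eqref{unif_u_Feynman_Kac}, \eqref{ineq_grad_linear}, \eqref{ineq_Hess_linear} for $\|\mathbf u\|_{L^\infty}$, $\|\mathbf D\mathbf u\|_{L^\infty}$, $\|\mathbf D^2\mathbf u\|_{L^\infty}$ then involve $\|\mathbf P(\mathbf u,\mathbf D\mathbf u)\|_{L^\infty}$ and $\|\mathbf P(\mathbf u,\mathbf D\mathbf u)\|_{L^\infty(C^\gamma)}$, which by \A{P${}_{\mathbf P}$} are controlled by $\mathscr M_{\mathbf P}(\|\mathbf D\mathbf u\|_{L^\infty})(1+\|\mathbf u\|_{L^\infty})$ and $\mathscr M_{\mathbf P}(\|\mathbf D\mathbf u\|_{L^\infty(C^\gamma)})(1+\|\mathbf u\|_{L^\infty})$, the latter $C^\gamma$ norm being reduced by interpolation $[\mathbf D\mathbf u]_\gamma\le C\|\mathbf D\mathbf u\|_{L^\infty}^{1-\gamma}\|\mathbf D^2\mathbf u\|_{L^\infty}^{\gamma}$. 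Thus the non-linearity feeds back, through the source, into the very norms under estimation; closing this loop — by exploiting the short-time smoothing factors $t^{(1+\gamma)/2},t^{\gamma/2}$ in those inequalities together with the self-improving Lemma~\ref{lemme_Taupe}, and then reading off $\|\mathbf D^2\mathbf u\|_{L^\infty(C^\gamma)}$, $\|\partial_t\mathbf u\|_{L^\infty}$, $\|\partial_t\mathbf u\|_{L^\infty(C^\gamma)}$ from \eqref{ineq_Hess_Holder_linear}, \eqref{ineq_partial_t_unif_linear}, \eqref{ineq_partial_t_Holder_linear} — is the main obstacle. It is here that the absence of a first order linear term is essential, for it keeps \eqref{unif_u_Feynman_Kac} and \eqref{ineq_grad_linear} free of the drift, and where the structure encoded in $\mathscr M_{\mathbf P}$ must be used with care.

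Compactness and the passage to a strong solution would then follow the template of Sections~\ref{sec_compact}--\ref{sec_weak_strong}. Continuity of $\mathscr H_L(\mathbf f,\mathbf g)[\cdot]$ on $L^\infty([0,T];C_b^{1+\gamma})$ comes from the stability of the linear solution operator together with \A{F${}_{\mathbf P}$}; relative compactness follows from the uniform $L^\infty([0,T];C_b^{2+\gamma})\cap C_b^{1}([0,T];C_b^{\gamma})$ bound of the previous paragraph, the compact embedding of $C^{2+\gamma}$ into $C^{1+\gamma}$ on each ball $B_d(0,R)$, and the time-equicontinuity coming from the $\partial_t$ control, all upgraded from balls to $\R^d$ by the cut-off and weak formulation \eqref{eq_weak_Burgers}. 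Schaefer's theorem then yields a weak fixed point $\mathbf u$, which is promoted to a pointwise solution by mollifying the weak formulation with a Dirac sequence exactly as in \eqref{conv_weak_strong_Burgers_uDu} — the non-linear term being handled by \A{P${}_{\mathbf P}$} and the already secured Schauder bounds — with the announced regularity.

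Finally, for uniqueness I would argue as in Section~\ref{sec_uniqu_Burger}: if $\mathbf u_1,\mathbf u_2$ are two solutions, bounded in $L^\infty([0,T];C_b^{2+\gamma})$ by a common $\mathbf N_{\mathbf f,\mathbf g}(T)$, then $\mathbf U:=\mathbf u_1-\mathbf u_2$ solves a linear equation with source $-(\mathbf P(\mathbf u_1,\mathbf D\mathbf u_1)-\mathbf P(\mathbf u_2,\mathbf D\mathbf u_2))$ and $\mathbf U(0,\cdot)=\mathbf 0$. Using the Gateaux differentiability of $\mathbf P$ in its second argument recorded in \A{P${}_{\mathbf P}$}, I would split this difference as $\mathscr A(t,x)\cdot\mathbf D\mathbf U+\mathbf R$ with $\mathscr A$ bounded (with derivative controlled by $\mathbf N_{\mathbf f,\mathbf g}(T)$) and $\|\mathbf R\|_{L^\infty}\le\|\mathbf U\|_{L^\infty}\,\tilde{\mathscr M}_{\mathbf P}(\cdot)$ by \A{F${}_{\mathbf P}$}; integrating by parts in the Duhamel representation of $\mathbf U$ — transferring the derivative from $\mathbf U$ onto $\tilde p$ at the integrable cost $(t-s)^{-1/2}$, as in the quasi-linear proof — gives $\|\mathbf U(t,\cdot)\|_{L^\infty}\le C\,\mathbf N_{\mathbf f,\mathbf g}(T)\int_0^t(1+(t-s)^{-1/2})\|\mathbf U(s,\cdot)\|_{L^\infty}\,ds$, whence $\mathbf U\equiv\mathbf 0$ by the singular Gr\"onwall lemma. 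The two points I expect to require the most care are the closure of the \textit{a priori} estimates, because of the gradient feedback of $\mathbf P$, and this linearisation step in the uniqueness argument, neither of which has a counterpart in the quasi-linear setting.
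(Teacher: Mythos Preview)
Your freezing and fixed-point setup are reasonable, and the compactness/uniqueness parts would go through essentially as you outline. The genuine gap is in the \textit{a priori} estimates, precisely at the point you yourself flag as ``the main obstacle''. When you treat $\mathbf P(\mathbf u,\mathbf D\mathbf u)$ as a source, the gradient bound \eqref{ineq_grad_linear} (with $\mathbf b\equiv 0$) becomes
\[
\|\mathbf D\mathbf u(t,\cdot)\|_{L^\infty}\ \le\ C\,[\nu^{-1}t]^{1/2}\,\mathscr M_{\mathbf P}\bigl(\|\mathbf D\mathbf u\|_{L^\infty}\bigr)\bigl(1+\|\mathbf u\|_{L^\infty}\bigr)+\text{data},
\]
and similarly for $\|\mathbf u\|_{L^\infty}$. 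Since $\mathscr M_{\mathbf P}$ is only assumed \emph{locally bounded} in \A{P${}_{\mathbf P}$}, it may grow super-polynomially; Lemma~\ref{lemme_Taupe} requires the inequality $x\le a x^{\eta}+b$ with $\eta<1$, which is unavailable here, and the short-time factors $t^{1/2},t^{\gamma/2}$ only buy you a local-in-time bound whose lifespan shrinks with the size of the solution, so a continuation argument cannot reach an arbitrary $T$.

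The paper closes the loop by a different mechanism. It first \emph{differentiates} the equation: setting $\mathbf v=\mathbf D\mathbf u$ and using that the first slot of $\mathbf P$ is $x$-independent and $\mathbf c=\mathbf c(t)$, the chain rule gives
\[
\partial_t\mathbf v+\mathbf D\mathbf P\bigl(\mathbf u,\mathbf D\mathbf u(t,x)\bigr)\cdot\mathbf D\mathbf v+\mathbf c(t)\otimes\mathbf v=\mathbf D^2\mathbf v:a(t)+\mathbf D\mathbf f,\qquad \mathbf v(0,\cdot)=\mathbf D\mathbf g,
\]
so that the non-linearity $\mathbf P$ now sits only in the \emph{drift} of a linear equation for $\mathbf v$. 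The Feynman--Kac $L^\infty$ bound \eqref{unif_u_Feynman_Kac} is \emph{drift-independent}, hence
\[
\|\mathbf D\mathbf u(t,\cdot)\|_{L^\infty}\le\bigl(t\|\mathbf D\mathbf f\|_{L^\infty}+\|\mathbf D\mathbf g\|_{L^\infty}\bigr)\exp\!\Big(\int_0^t|\mathbf c(s)|\,ds\Big),
\]
with no circularity at all. This is also why $\mathbf f\in L^\infty([0,T];C_b^1)$ is assumed rather than $C_b^\gamma$. Once $\|\mathbf D\mathbf u\|_{L^\infty}$ is pinned down, $\|\mathbf u\|_{L^\infty}$ follows from Duhamel plus Gr\"onwall (now $\mathscr M_{\mathbf P}(\|\mathbf D\mathbf u\|_{L^\infty})$ is a known constant), then $\|\mathbf D^2\mathbf u\|_{L^\infty}$ by differentiating Duhamel and another Gr\"onwall, and the remaining Schauder norms by Theorem~\ref{THEO_SCHAU}. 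In short: your scheme tries to absorb $\mathbf P$ as a source and fight a genuinely non-linear inequality for $\|\mathbf D\mathbf u\|$; the paper pushes $\mathbf P$ into the drift of the differentiated equation and exploits the parabolic-bootstrap fact that the $L^\infty$ Feynman--Kac estimate ignores the drift.
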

\begin{proof}[Proof of Theorem \ref{THEO_SCHAU_NON} ]
	Continuity and compactness of the associated operator is very similar to the quasi-linear case. In the current proof, we detail the \textit{a priori} controls.
	Nevertheless, in the current non-linear case, we first need to upper-bound the gradient.
	\\
	
	\textbf{Control of $\|\mathbf D \mathbf u (t,\cdot)\|_{L^\infty}$}
	\\
	
	Let us denote $\mathbf v= \mathbf D \mathbf u$, thanks to chain rules, we get 
		\begin{equation*}
	\mathbf D \Big ( \mathbf P (  \mathbf u, \mathbf u(t,x) ) \Big ) =
	\mathbf D \mathbf P (  \mathbf u, \mathbf D \mathbf u(t,x) )\cdot \mathbf D \mathbf v(t,x),
	\end{equation*}
	where $\mathbf D \mathbf P $ stands for the Gateau 
	derivative of the operator  w.r.t. the second entry.
	By differentiating the Cauchy problem \eqref{Non_linear_equation_v1},  we then derive for any $x \in \R^d$,
	\begin{equation}
	\label{Derivative_Non_linear_equation_v1}
	\begin{cases}
	\partial_t\mathbf v(t,x)+  \mathbf D \mathbf P (  \mathbf u, \mathbf D \mathbf u(t,x) )  \cdot \mathbf D \mathbf v(t,x)
	 + \mathbf c(t) \otimes \mathbf v(t,x) 
	= \mathbf  D^2 \mathbf v(t,x):   a(t)+ \mathbf D \mathbf  f(t,x) 
	,\ t\in (0,T],\\
	\mathbf v(0,x)=\mathbf D \mathbf  g(x).
	\end{cases}
	\end{equation}
	From the Feynman-Kac representation, see Section \ref{sec_Feynman_Kac}, we directly derive that
	\begin{equation*}
		\|\mathbf v (t,\cdot) \|_{L^\infty}
		= \|\mathbf D \mathbf u (t,\cdot)\|_{L^\infty}
		\leq t \|\mathbf D \mathbf f\|_{L^\infty}+ \|\mathbf D \mathbf g\|_{L^\infty}+ \int_{0}^t \mathbf |\mathbf c(s)| 	\|\mathbf v (s,\cdot) \|_{L^\infty} ds.
	\end{equation*}
	Let us point out that we cannot easily perform a control of an integration by part to consider less regularity of the source functions. Indeed, the probability density relies on the stochastic process associated with the Kolmogorov equation \eqref{Derivative_Non_linear_equation_v1} whose the gradient behaviour may depend 
	on the corresponding drift (which also depends on $\mathbf u$), see for instance \cite{aron:59} and \cite{frie:64}.
	\\
	
	Next, the Gr\"onwall's lemma yields
	\begin{eqnarray}\label{ineq_Du_non_linear}
|\mathbf D \mathbf u (t,\cdot)\|_{L^\infty} &=&	\|\mathbf v (t,\cdot) \|_{L^\infty}
	 \nonumber \\
	&\leq& \big (t \|\mathbf D \mathbf f\|_{L^\infty}+ \|\mathbf D \mathbf g\|_{L^\infty}\big ) \exp \big (\int_{0}^t |\mathbf c(s)|  ds \big ) 
	\nonumber \\
	&:=& \mathbf N_{\eqref{ineq_Du_non_linear}}(t,\|\mathbf D \mathbf f\|_{L^\infty}, \|\mathbf D \mathbf g\|_{L^\infty}, \|\mathbf c\|_{L^\infty}).
	\end{eqnarray}
	
	\textbf{Control of $\| \mathbf u (t,\cdot)\|_{L^\infty}$}
	\\
	
	Coming back to the initial Cauchy problem \eqref{Non_linear_equation_v1}, we derive from Duhamel formula
		\begin{equation*}
	\|\mathbf u (t,\cdot) \|_{L^\infty}
	\leq t \|\mathbf f\|_{L^\infty}+ \|\mathbf g\|_{L^\infty}+ \int_0^t \|\mathbf P(\mathbf u(s),\mathbf D \mathbf u(s,\cdot ))\|_{L^\infty}ds 
	+ \int_{0}^t |\mathbf c(s)| \|\mathbf u (s,\cdot)\|_{L^\infty} ds .
	\end{equation*}
	By assumption \A{P${}_{\mathbf P}$}, we get
		\begin{equation*}
	\|\mathbf u (t,\cdot) \|_{L^\infty}
	\leq t \|\mathbf f\|_{L^\infty}+ \|\mathbf g\|_{L^\infty}+ \int_0^t \mathscr M_{\mathbf P}(\|\mathbf D \mathbf u(s,\cdot )\|_{L^\infty})(1+\|\mathbf u(s,\cdot )\|_{L^\infty})ds 
	+ \int_{0}^t |\mathbf c(s)| \|\mathbf u (s,\cdot)\|_{L^\infty} ds ,
	\end{equation*}
	and by Gr\"onwall lemma
	\begin{eqnarray}\label{ineq_u_non_linear}
	\|\mathbf u (t,\cdot) \|_{L^\infty}
	&\leq& \Big (t \|\mathbf f\|_{L^\infty}+ \|\mathbf g\|_{L^\infty}+ \int_0^t \mathscr M_{\mathbf P}(\|\mathbf D \mathbf u(s,\cdot )\|_{L^\infty})ds \Big )
	\exp \Big ( \int_0^t \mathscr M_{\mathbf P}(\|\mathbf D \mathbf u(s,\cdot )\|_{L^\infty})+ |\mathbf c(s)| ds \Big )
	\nonumber \\
	&\leq &
	 \Big (t \|\mathbf f\|_{L^\infty}+ \|\mathbf g\|_{L^\infty}+ \int_0^t \mathscr M_{\mathbf P}\big (\mathbf N_{\eqref{ineq_Du_non_linear}}(t,\|\mathbf D \mathbf f\|_{L^\infty}, \|\mathbf D \mathbf g\|_{L^\infty}, \|\mathbf c\|_{L^\infty})\big )ds \Big )
\nonumber \\
&&\times 	\exp \Big ( \int_0^t \mathscr M_{\mathbf P}\big (\mathbf N_{\eqref{ineq_Du_non_linear}}(t,\|\mathbf D \mathbf f\|_{L^\infty}, \|\mathbf D \mathbf g\|_{L^\infty}, \|\mathbf c\|_{L^\infty})\big )+ |\mathbf c(s)| ds \Big )
	\nonumber \\
	&=:& \mathbf N_{\eqref{ineq_u_non_linear}}(t,\| \mathbf f\|_{L^\infty (C^1_b)}, \| \mathbf g\|_{C^1_b}, \|\mathbf c\|_{L^\infty})
	.
	\end{eqnarray}
	
	\textbf{Control of $\|\mathbf D^2 \mathbf u (t,\cdot)\|_{L^\infty}$}
	\\
	
	For the Hessian estimates, we differentiate twice Duhamel formulation and we get
			\begin{eqnarray*}
&&	
\|\mathbf D^2 \mathbf  u (t,\cdot) \|_{L^\infty}
\nonumber \\
	&\leq&  C [\nu^{-1} t]^{\frac{1}{2}}\|\mathbf D \mathbf f\|_{L^\infty}+ \|\mathbf D^2 \mathbf g\|_{L^\infty}+C \int_0^t [\nu (t-s)]^{-\frac{1}{2}} \|\mathbf D \mathbf P(\mathbf u(s),\mathbf D \mathbf u(s,\cdot ))\|_{L^\infty} \|\mathbf D^2 \mathbf u(s,\cdot )\|_{L^\infty} ds 
	\nonumber \\
	&&+C  \int_{0}^t [\nu (t-s)]^{-\frac{1}{2}} |\mathbf c(s)| \times \|\mathbf D \mathbf  u (s,\cdot)\|_{L^\infty} ds 
	\nonumber \\
		&\leq&  C [\nu^{-1} t]^{\frac{1}{2}}\|\mathbf D \mathbf f\|_{L^\infty}+ \|\mathbf D^2 \mathbf g\|_{L^\infty}
		\nonumber \\
		&&+ \int_0^t [\nu (t-s)]^{-\frac{1}{2}} \mathscr M_{ \mathbf P}(\|\mathbf D \mathbf u(s,\cdot )\|_{L^\infty})\big (1+\mathbf N_{\eqref{ineq_u_non_linear}}(t,\| \mathbf f\|_{L^\infty (C^1_b)}, \| \mathbf g\|_{C^1_b}, \|\mathbf c\|_{L^\infty}) \big ) \|\mathbf D^2 \mathbf u(s,\cdot )\|_{L^\infty} ds 
	\nonumber \\
	&&+C  \|\mathbf c\|_{L^\infty} \mathbf N_{\eqref{ineq_Du_non_linear}}(t,\| \mathbf D \mathbf f\|_{L^\infty (C^1_b)}, \| \mathbf D \mathbf g\|_{C^1_b}, \|\mathbf c\|_{L^\infty}) ;
		\end{eqnarray*}
also Gr\"onwall's lemma yields
				\begin{eqnarray}\label{ineq_D2_non_linear}
		\|\mathbf D^2 \mathbf  u (t,\cdot) \|_{L^\infty}
		&\leq&  \Big ([\nu ^{-1} t]^{\frac{1}{2}}\|\mathbf D \mathbf f\|_{L^\infty}+ \|\mathbf D^2 \mathbf g\|_{L^\infty}+ + \|\mathbf c\|_{L^\infty} \mathbf N_{\eqref{ineq_u_non_linear}}(t,\| \mathbf f\|_{L^\infty (C^1_b)}, \| \mathbf g\|_{C^1_b}, \|\mathbf c\|_{L^\infty}) \Big )
		\nonumber \\
		&&\times \exp \Big ( \int_0^t [\nu (t-s)]^{-\frac{1}{2}} \mathscr M_{\mathbf D \mathbf P} \big (N_{\eqref{ineq_Du_non_linear}}(t,\|\mathbf D \mathbf f\|_{L^\infty}, \|\mathbf D \mathbf g\|_{L^\infty},\|\mathbf c\|_{L^\infty})\big )
		\nonumber \\
		 &&\big (1+\mathbf N_{\eqref{ineq_u_non_linear}}(t,\| \mathbf f\|_{L^\infty (C^1_b)}, \| \mathbf g\|_{C^1_b},\|\mathbf c\|_{L^\infty}) \big )  ds \Big )
		\nonumber \\
		&=:& \mathbf N_{\eqref{ineq_D2_non_linear}}(t,\| \mathbf f\|_{L^\infty (C^1_b)}, \| \mathbf g\|_{C^1_b}, \|\mathbf c\|_{L^\infty} ).
	\end{eqnarray}
	The remaining Schauder estimates directly derives from Theorem \ref{THEO_SCHAU}.
	
\end{proof}

	\appendix

	\mysection{Reminders on the heat kernel properties}
	\label{sec_technical_lemma}

Let us detail now the proof of Proposition \ref{prop_norm_hold_heat_kernel} thanks to the recalled properties stated in Section \ref{sec_Gaussian_properties}.
	\begin{proof}[Proof of Proposition \ref{prop_norm_hold_heat_kernel}]
		
		For all $x \in \R^d$, $t \in [0,T]$ and $\alpha \in \N^d_0$, we recall
		\begin{equation*}
		|D_x^\alpha\tilde G \zeta (t,x)|
		=
		\Big | \int_0^t \int_{\R^d} D_x^\alpha \tilde p (s,t,x,y)  \zeta(s,y) dy \ ds \Big |.
		\end{equation*}
	The first uniform norm is direct.
		\\
		
		\textbf{Uniform norms of the spatial derivatives}
		\\
		
		
		\begin{itemize}
			
			\item Control of $\|D \tilde G \zeta\|_{L^\infty} $

			By cancellation , i.e. for any $s \in [0,t] $ we have $ \int_{\R^d} D_x \tilde p (s,t,x,y)  \zeta(s,x) dy =0$ , we get
			\begin{eqnarray*}
				|D \tilde G \zeta (t,x)|
				&=&
				\Big | \int_0^t \int_{\R^d} D_x \tilde p (s,t,x,y) [ \zeta(s,y)-\zeta(s,x)] dy \ ds \Big |
				\nonumber \\
				&\leq & C
				\|\zeta\|_{L^\infty(C^\gamma)}
				\int_0^t \int_{\R^d} [\nu (t-s)]^{-\frac 12} \bar p (s,t,x,y)|y-x|^\gamma dy \ ds ,
			\end{eqnarray*}
			from \eqref{FIRST_deriv_CTR_DENS} and the regularity of $\zeta$.
			We deduce from \eqref{ineq_absorb}:
			\begin{eqnarray}\label{ineq_grad_sup_prop_heat_kernel}
			|D\tilde G \zeta (t,x)|
			&\leq & C
			\|\zeta\|_{L^\infty(C^\gamma)}
			\int_0^t \int_{\R^d}(t-s)^{\frac{-1+\gamma}{2}} \bar p (s,t,x,y) dy \ ds \nonumber \\
			&\leq &
			C 
			\|\zeta\|_{L^\infty(C^\gamma)}
			\int_0^t  [\nu (t-s)]^{\frac{-1+\gamma}{2}}  ds 
			\nonumber \\
			&\leq &
			C 
			\|\zeta\|_{L^\infty(C^\gamma)}
			\nu ^{\frac{-1+\gamma}{2}}  
			t^{\frac{1+\gamma}{2}} .
			\end{eqnarray}
			
			\item Control of $\|D^2\tilde G \zeta\|_{L^\infty} $
			
			Similarly, by cancellation, we obtain
			\begin{eqnarray*}
				|D_x^2\tilde G \zeta (t,x)|
				&=&
				\Big | \int_0^t \int_{\R^d} D_x^2 \tilde p (s,t,x,y) [ \zeta(s,y)-\zeta(s,x)] dy \ ds \Big |
				\nonumber \\
				&\leq & C
				\|\zeta\|_{L^\infty(C^\gamma)}
				\int_0^t \int_{\R^d} [\nu (t-s)]^{-1} \bar p (s,t,x,y)|y-x|^\gamma dy \ ds ,
			\end{eqnarray*}
			from \eqref{FIRST_deriv_CTR_DENS} and the regularity of $\zeta$.
			We deduce from identity \eqref{ineq_absorb}
			\begin{eqnarray}\label{ineq_D2_sup_prop_heat_kernel}
			|D_x^2\tilde G \zeta (t,x)|
			&\leq & C
			\|\zeta\|_{L^\infty(C^\gamma)}
			\int_0^t \int_{\R^d} [\nu (t-s)]^{-1+\frac{\gamma}{2}} \bar p (s,t,x,y) dy \ ds \nonumber \\
			&\leq &
			C 
			\|\zeta\|_{L^\infty(C^\gamma)}
			\int_0^t  [\nu (t-s)]^{-1+\frac{\gamma}{2}}  ds 
			\nonumber \\
			&\leq &
			C 
			\|\zeta\|_{L^\infty(C^\gamma)} \nu ^{-1+\frac{\gamma}{2}} 
			t^{\frac{\gamma}{2}} .
			\end{eqnarray}
			
			\textbf{H\"older moduli of the spatial variable}

			\item Control of $\|D^2\tilde G \zeta\|_{L^\infty(C^\gamma)} $
			
			For any $(x,x') \in \R^d \times \R^d$, we aim to prove that there is $C>0$ such that
			\begin{equation*}
			\sup_{t \in [0,T]} |D^2\tilde G \zeta(t,x)-D^2\tilde G \zeta(t,x')| \leq C \nu^{-1+ \frac{\gamma}{2}}(1+\nu^{-\frac{1}{2}})
			\|\zeta\|_{L^\infty(C^\gamma)} |x-x'|^\gamma.
			\end{equation*}
			We basically differentiate the \textit{diagonal}/\textit{off-diagonal} regimes.
			Let us define the associated Green kernels:
			\begin{eqnarray}\label{def_G_diag_G_off_diag}
			\tilde G^{\text{diag}}\zeta(t,x) &:=&  \int_0^t \int_{\R^d} \tilde p (s,t,x,y) \zeta(s,y) \mathbb{I}_{t-s\leq [x-x'|^2} dy \ ds,
			\nonumber \\
			\tilde G^{\text{off-diag}}\zeta(t,x) &:=& \int_0^t \int_{\R^d} \tilde p (s,t,x,y) \zeta(s,y) \mathbb{I}_{t-s> [x-x'|^2} dy \ ds.
			\end{eqnarray}
			
			$*$ \textit{diagonal}: if $(t-s)\leq [x-x'|^2$, we get by triangular inequality
			\begin{equation*}
			|D^2\tilde G^{\text{diag}} \zeta(t,x)-D^2\tilde G^{\text{diag}} \zeta(t,x')|
			\leq |D^2\tilde G^{\text{diag}} \zeta(t,x)|+|D^2\tilde G^{\text{diag}} \zeta(t,x')|.
			\end{equation*}
			Inequality \eqref{ineq_D2_sup_prop_heat_kernel} and the \textit{diagonal} considered regime yield
			\begin{eqnarray}\label{ineq_D2_hold_prop_heat_kernel_diag}
			|D^2\tilde G^{\text{diag}} \zeta(t,x)-D^2\tilde G^{\text{diag}} \zeta(t,x')|
			&\leq& 2  C 
			\|\zeta\|_{L^\infty(C^\gamma)}
		\nu ^{-1+ \frac{\gamma}{2}}	t^{\frac{\gamma}{2}} \mathbb{I}_{t\leq [x-x'|^2}
			\nonumber \\
			&\leq&  
			C \nu^{-1+ \frac{\gamma}{2}}
			\|\zeta\|_{L^\infty(C^\gamma)}
			|x-x'|^{\gamma} .
			\end{eqnarray}
			
			$*$ \textit{off-diagonal}: if $(t-s)> [x-x'|^2$, we write by a Taylor expansion:
			\begin{eqnarray*}
				&&|D^2\tilde G^{\text{off-diag}}_ \zeta(t,x)-D^2\tilde G^{\text{off-diag}}_ \zeta(t,x')|
				\nonumber\\
				&=&
				\Big | \int_0^{t-|x-x'|^2} \int_{\R^d} [D_x^2 \tilde p (s,t,x,y)-D_x^2 \tilde p (s,t,x',y)]  \zeta(s,y) dy \ ds \Big |
				\nonumber \\
				&=&
				\Big | \int_0^{t-|x-x'|^2} \int_{\R^d} \int_0^1 (x-x') \cdot D_x^3 \tilde p \big (s,t,x'+\mu(x-x'),y \big ) d\mu \ \zeta(s,y) dy \ ds \Big |.
			\end{eqnarray*}
			We write by cancellation,
			\begin{eqnarray*}
				&&|D^2\tilde G^{\text{off-diag}} \zeta(t,x)-D^2\tilde G^{\text{off-diag}} \zeta(t,x')|
				\nonumber \\
				&=&
				\Big | \int_0^{t-|x-x'|^2} \int_{\R^d} \int_0^1 (x-x') \cdot D_x^3 \tilde p \big (s,t,x'+\mu(x-x'),y \big ) 
				\nonumber \\
				&& \big [\zeta\big (s,y \big ) - \zeta \big ( s, x'+\mu(x-x') \big ) \big ]d\mu \ dy \ ds \Big |
				\nonumber \\
				&\leq & C \|\zeta\|_{L^\infty(C^\gamma)} |x-x'|
				\nonumber \\
				&&  \int_0^{t-|x-x'|^2} \int_{\R^d} \int_0^1 
				[\nu (t-s)]^{-\frac 32 }\bar p \big (s,t,x'+\mu(x-x'),y \big ) \big |y- x'+\mu(x-x') \big ) \big |^\gamma d\mu \ dy \ ds .
			\end{eqnarray*}
			We get by \eqref{ineq_absorb},
			\begin{eqnarray}\label{ineq_D2_hold_prop_heat_kernel_off_diag}
			&&|D^2\tilde G^{\text{off-diag}} \zeta(t,x)-D^2\tilde G^{\text{off-diag}} \zeta(t,x')|
			\nonumber \\
			&\leq & C \|\zeta\|_{L^\infty(C^\gamma)} |x-x'|
			\nonumber \\
			&&
			\int_0^{t-|x-x'|^2} \int_{\R^d} \int_0^1 
			[\nu (t-s)]^{\frac {-3+\gamma}2 }\bar p \big (s,t,x'+\mu(x-x'),y \big )d\mu \ dy \ ds 
			\nonumber \\
			&\leq & C \|\zeta\|_{L^\infty(C^\gamma)} |x-x'|
			\int_0^{t-|x-x'|^2} [\nu (t-s)]^{\frac {-3+\gamma}2 } ds
			\nonumber \\
			&\leq & C \nu ^{\frac {-3+\gamma}2 } \|\zeta\|_{L^\infty(C^\gamma)} |x-x'|^\gamma
			.
			\end{eqnarray}
			Finally, by inequalities \eqref{ineq_D2_hold_prop_heat_kernel_diag} and \eqref{ineq_D2_hold_prop_heat_kernel_off_diag}, we get:
			\begin{equation}\label{ineq_D2_hold_prop_heat_kernel}
			\|D^2\tilde G \zeta\|_{L^\infty(C^\gamma)}
			\leq C \nu^{-1+ \frac{\gamma}{2}}(1+\nu^{-\frac{1}{2}}) 
			 \|\zeta\|_{L^\infty(C^\gamma)} 
			.
			\end{equation}

		\item Control of $\|\partial_t \tilde G \zeta\|_{L^\infty} $
			

			By chain rules,
			\begin{equation}\label{eq_partial_t_tilde_G}
			\partial_t \tilde G \zeta(t,x)
			= 
			- \zeta(t,y) 
			+  \int_0^t \int_{\R^d} \partial_t \tilde p (s,t,x,y) \zeta(s,y) dy \ ds
			.
			\end{equation}
			We already know that $ \zeta \in L^\infty(C^\gamma)$, we then have to show that the second contribution in the r.h.s. lies in the same H\"older space.
			To do that, let us first remark that from the heat equation: 
			\begin{equation}\label{eq_heat_equ}
			\partial_t \tilde p (s,t,x,y)= 
			{\rm Tr} \big (  D^2_x \tilde p (s,t,x,y) a(s)\big ) .
			\end{equation}
			Hence we can rewrite \eqref{eq_partial_t_tilde_G} by:
			\begin{equation}\label{eq_partial_t_tilde_G_bis}
			\partial_t \tilde G \zeta(t,x)
			= 
			- \zeta(t,y) 
			+ 
			{\rm Tr} \Big (  \int_0^t \int_{\R^d} D^2_x \tilde p (s,t,x,y) a(s) \zeta(s,y) dy \, ds \,  \Big )
			.
			\end{equation}
				From \eqref{ineq_D2_hold_prop_heat_kernel} and \eqref{eq_partial_t_tilde_G}, we readily have:
			\begin{equation}
			\| \partial_t \tilde G \zeta \|_{L^\infty}
			\leq 
			\|\zeta\|_{L^\infty}+ C 
			[\nu T]^{\frac{\gamma}{2}}  \|\zeta\|_{L^\infty(C^\gamma)}
			.
			\end{equation}

			\item Control of $\|\partial_t \tilde G \zeta\|_{L^\infty(C^\gamma)} $

			We directly deduce from \eqref{ineq_D2_hold_prop_heat_kernel}:
			\begin{equation}\label{eq_partial_t_tilde_G_ter}
			\| \partial_t \tilde G \zeta \|_{L^\infty(C^\gamma)}
			\leq 
			\nu^{ \frac{\gamma}{2}}(1+\nu^{-\frac{1}{2}})
			\|\zeta\|_{L^\infty(C^\gamma)}
			\leq  C \|\zeta\|_{L^\infty(C^\gamma)}
			.
			\end{equation}

		\end{itemize}
	\end{proof}

\bibliographystyle{alpha}
	\bibliography{bibli}
	
\end{document}